\newtheorem{thrm}{Theorem}  
\newtheorem{prpstn}{Proposition}
\newtheorem{dfntn}{Definition}
\newtheorem{lmm}{Lemma}
\newtheorem{xmpl}{Example}
\newtheorem{rmrk}{Remark}
\renewcommand{\d}{\textnormal{d}}
\date{}
\title{Shape minimization of the dissipated energy in dyadic trees}
\author{Xavier Dubois de La Sabloni\`ere\footnotemark[4]~\footnotemark[1]\and 
Benjamin Mauroy\footnotemark[2]
\and 
Yannick Privat\footnotemark[3]
}
\begin{document}

\maketitle

\begin{abstract}

\par In this paper, we study the role of boundary conditions on the optimal shape of a dyadic tree in which flows a Newtonian fluid. Our optimization problem consists in finding the shape of the tree that minimizes the viscous energy dissipated by the fluid with a constrained volume, under the assumption that the total flow of the fluid is conserved throughout the structure. 
These hypotheses model situations where a fluid is transported from a source towards a 3D domain into which the transport network also spans. Such situations could be encountered in organs like for instance the lungs and the vascular networks.

Two fluid regimes are studied: (i) low flow regime (Poiseuille) in trees with an arbitrary number of generations using a matricial approach and (ii) non linear flow regime (Navier-Stokes, moderate regime with a Reynolds number $100$) in trees of two generations using shape derivatives in an augmented Lagrangian algorithm coupled with a 2D/3D finite elements code to solve Navier-Stokes equations. It relies on the study of a finite dimensional optimization problem in the case (i) and on a standard shape optimization problem in the case (ii).
We show that the behaviours of both regimes are very similar and that the optimal shape is highly dependent on the boundary conditions of the fluid applied at the leaves of the tree. 
\end{abstract}
\paragraph{Keywords:} {Shape optimization, Dyadic trees, Poiseuille's law, Fluid Mechanics, Stokes and Navier-Stokes systems.}\\
\paragraph{2000 Mathematics Subject Classification:} {35Q30, 49K30, 65K10.}\\
\paragraph{Acknowledgement:} {The third author is partially supported by the ANR project GAOS ``Geometric Analysis of Optimal Shapes'' .}\\
\maketitle
\newpage
\section*{Introduction and motivations}
Tree structures are very common means to transport a product between two regions of different scales. A fluid acting as a transporter often flows in such structures. The circulation of the fluid in such geometries can dissipate a lot of energy by viscous effects and the question of the optimization of the tree geometry arises. Important applications of this problem exist, like in industry, in the study of river basins, in water treatment, in medicine. Such structures are also often encountered in Biology and are the result of evolution. One can see roughly natural selection as an optimization process which adapts the organisms to their environment. Thus, a major question arises for biological systems: what are they optimized for? We know that an effect of natural selection is to minimize some complex cost functions relatively to a wide range of parameters. Although most of the time unknown, these parameters have probably various influences in term of amplitude, some stronger than others. Thus, if we are able, typically through a modeling work, to give hypotheses on what are the most influential parameters, to isolate them in a model and to determine their role if they were alone, then a simple comparison between the results of the model and the real biological system could indicate whether their role was truly important or not.
In the case of biological networks such as lungs or vascular network, two cost parameters arise naturally: the viscous dissipated energy of the fluid in the network and the volume of the network. Indeed, these organs have to deal with energy dissipation due to air or blood circulation \cite{weibel} and since they span in the geometry they have to feed, they cannot use too much volume. These organs can be subject to dysfunctions which are often consequences of an increase of their hydrodynamical resistance and thus of a loss of efficiency of the geometrical structures as transport systems. Hence, dysfunctions like asthma or heart attacks are often linked to an increase of the hydrodynamic resistance (energy cost spent for the circulation of the fluid) of the structure. Thus, researching the optimal shapes of tree structures could bring useful information. Previous studies have been made on this topic in the past like in \cite{Hess, bejan, tondeur, mauroy-optimalTreeDangerous, bernot} each on particular situations and applications.

In this frame, the goal of this work is to determine the shapes of dyadic trees that would minimize the viscous energy of a Newtonian fluid under a volume constraint on the tree. As written upwards, such structures are good candidates for the modeling of mammals bronchial trees \cite{mauroy-optimalTreeDangerous} and we will focus most particularly on this application. We study this problem for two regimes of flow. We begin with low flow regime (Poiseuille flow) using a matricial formulation of the problem as in \cite{mauryMeunier, mauroy-meunier}. We also study the non linear Navier-Stokes flow at a moderate Reynolds number ($\sim 100$) which is however sufficient to exhibit inertial effects around the bifurcation. In this second case, we use a numerical shape minimization method based on shape derivatives \cite{HP}. We show that for both regime the optimal structure depends on the fluid boundary conditions that are applied at tree root and leaves. Indeed, under the assumption that the total viscous flow is constant throughout the tree, the optimal shape is very different according to the boundary conditions imposed at the leaves: Dirichlet conditions or strictly identical Neumann conditions lead to an optimal tree with particular relationships between the flow in a branch and its diameter; non identical Neumann conditions lead to a degenerated tree reduced to a tube and only one leaf remains accessible to the fluid from the root. Moreover the numerical simulations in the non linear case give precisely the geometry of the bifurcation. We focus here on the 3D case, however it is easy to see, with very few changes in the reasoning, that our results would also hold for the 2D case.

\section{Terminology and notations}

In the following, we will call {\it inlet} of a dyadic tree either the open surface of the root of the tree or the root of the tree itself, depending on the context. Similarly, we will call {\it outlets} of a tree either the open surfaces of its leaves or its leaves themselves, also depending on the context. Mainly, we will refer to the open surfaces if we speak of boundary conditions and to the branches in the other cases.
Note that this terminology does not mean necessarily that fluid is going in the tree through the inlet and out of the tree through the outlet.

We will use the following notations throughout this section:\\
\begin{center}
\begin{tabular}{p{8cm}|p{8cm}}
$[x]$, with $x\in \mathbb{R}$ & the integer part of $x$\\
$B^\top$, where $B$ is a matrix & the transpose of $B$\\
$(\boldsymbol{e_1},\dots,\boldsymbol{e_n})$, with $n\in\mathbb{N}^*$ & the canonical basis of $\mathbb{R}$\\
$\langle\boldsymbol{x},\boldsymbol{y}\rangle$, where $\boldsymbol{x}$ and $\boldsymbol{y}$ are two vectors with same length & the euclidean inner product\\
$\lVert .\rVert$ & the euclidian norm, induced by the inner product $\langle .,.\rangle$\\
$\textnormal{diag}\boldsymbol{u}$, where $\boldsymbol{u}=(u_i)_{i\in \llbracket 1,n\rrbracket}\in \mathbb{R}^{n\times 1}$ & the diagonal matrix $D=(d_{i,j})_{1\leq i,j\leq n}$ such that $d_{i,i}=u_i$ for all $i\in \llbracket 1,n\rrbracket$.
\end{tabular}
\end{center}

In the following, a family of real numbers $(\nu_{\varepsilon})_{\varepsilon \in \mathbb{R}_+^*}$ is assimilated to a sequence since for each $n \in \mathbb{N}^*$, one can choose $\varepsilon = \frac1n$. \\

Moreover, let us define the direct sum of two matrices.
\begin{dfntn}
Let $m$ and $n$ be two nonzero integers. Let $A_1\in \mathbb{R}^{n\times n}$ and $A_2\in \mathbb{R}^{m\times m}$ be two matrices. The direct sum of $A_1$ and $A_2$ is the matrix $M$ in $\mathbb{R}^{n+m,n+m}$ defined by blocks by
$$
M=  A_1\oplus A_2=\left(\begin{array}{c|c}
A_1 & 0 \\
\hline
0 & A_2
\end{array}\right).
$$
\end{dfntn}

\section{Models}\label{The model}
We introduce here all the models used in this article. For the sake of clarity, all proofs of this section have been regrouped in Appendix \ref{app1}.

\subsection{Poiseuille's law}
We consider a viscous incompressible fluid whose dynamic viscosity is $\mu>0$. It flows in a steady and laminar state through a cylindrical rigid pipe whose length is $L>0$ and radius is $R>0$. We impose a \textit{no-slip} condition on the lateral boundary which means that the fluid ``sticks'' to the wall. We refer the inlet to $0$ and the outlet to $1$. Pressures ($p_0>0$ and $p_1>0$) at its openings are supposed to be uniform all over the section. The volumetric flow rate $\Phi$ is chosen positive if the flow goes from section $0$ to section $1$.\\
The fluid behavior is ruled by the Navier-Stokes equations in the cylinder. The solution of these equations with the previous conditions is characterized with velocity profiles that are parabolic on each section and a pressure that is constant on each section and decrease linearly along the axis of the pipe. This type of flow is known to verify Poiseuille's law. This law boils down to a linear relationship between the pressure drop $p_0 - p_1$ and the volumetric flow rate $\Phi$ through the pipe, that is
$$
p_0 - p_1 = r\Phi
$$
where
$$
r = \frac{8 \mu L}{\pi R^4} > 0.
$$\\
Therefore, if the pressure drop $p_0 - p_1$ is positive, the flow goes from section $0$ to section $1$.
\newline

There is an exact correspondence between Poiseuille's law and Ohm's law if we match pressure drop with potential difference and volumetric flow rate with electric current. Thus by analogy, the proportionality constant $r$ is called a hydrodynamic resistance. Finally, since the flow is incompressible, the flow rate is conserved through the pipe.

\subsection{Dyadic trees}

In this section, we will define the different mathematical tools needed in the sequel to manipulate tree structures in which flows a Poiseuille's fluid. In particular, we give a matricial relationship between the flow at the inlet of a dyadic tree and the pressures at the outlets.

We consider the flow of an incompressible and viscous fluid whose dynamic viscosity is $\mu $ through a finite dyadic tree of $N+1$ generations ($N \in \mathbb{N}$, $N \geq 1 $). We recall that a new {\it generation} of this tree occurs when a bifurcation is created. Hence the root branch corresponds to generation 1 and the branches at the leaves of the tree corresponds to generation $N+1$. Consequently, the tree has $2^{N}$ outlets and $2^{N+1} - 1$ branches. 

We also define the {\it level} as a number associated to a generation, equal to 1 for the second generation and increased by 1 at each generation. Therefore, a tree with $N+1$ generations has $N$ levels. 

We do the distinction between the generation and the level in the tree, because it makes the indexation of the different variables of the tree easier. The levels will be denoted in all this section by the index letter $i$.

\par We assume that our tree is composed of connected rigid cylindrical pipes in which the fluid obeys to Poiseuille's law. We call $\Phi>0$ the volumetric flow rate that enters the tree. Since the flow is incompressible, the flow rate is conserved through the pipes and at bifurcations.
\par The sets $\mathcal{B}_{N,i}$ of couples of indexes that locate each branch of a given level $i$ ($i \in \llbracket 1,N \rrbracket$) is
\begin{equation}
\mathcal{B}_{N,i} = \left\{(i,j) \mid j \in \llbracket 1,2^i \rrbracket\right\}.
\end{equation}
Hence, $i$ represents the level and $j$ the position of the branch at this level. Thus, the set $\mathcal{B}_N$ of all indexes locating the pipes for the overall tree is
\begin{equation}
\mathcal{B}_N = \bigcup_{i \in \llbracket 1,N \rrbracket} \mathcal{B}_{N,i}\\
\end{equation}

This set does not include the root branch of the tree. This branch has a radius $R_0>0$, a length $L_0>0$, the pressure at its inlet is $p_0>0$ (pressure at the inlet of the tree) and is $p_1>0$ at its outlet. Thus, its hydrodynamic resistance is $r_0 = \frac{8 \mu L_0}{\pi R_0^4}$. According to Poiseuille's law, one has
$$p_0 - p_1 = r_0\Phi.$$
\par For a pipe whose location in the tree is given by the couple $(i,j) \in \mathcal{B}_N$, we denote $R_{i,j}>0$ its radius and $L_{i,j}>0$ its length. Therefore, the hydrodynamic resistance of this pipe is $r_{i,j} = \displaystyle \frac{8 \mu L_{i,j}}{\pi R_{i,j}^4}$. We use $q_{i,j}$ and $p_{i,j}>0$ to define respectively, the volumetric flow rate through this pipe and the pressure at its outlet. The flow rate $q_{i,j}$ is chosen positive if the fluid in a pipe flows towards the pipes with a higher generation index.

\par We consider now a pipe whose index is $(i,j) \in \displaystyle \bigcup_{i \in \llbracket 1,N-1 \rrbracket} \mathcal{B}_{N,i}$. Since the tree is dyadic, the hydrodynamic resistances of its (two) daughter pipes are $r_{i+1,2j-1}$ and $r_{i+1,2j}$, which belong to the $(i+1)$-th level. Then, we define the reduction ratios $x_{i+1,2j}$ and $x_{i+1,2j+1}$, that represents the change in the geometry of the pipes between the levels $i$ and $i+1$, by
\begin{equation}
x_{i+1,2j-1} = \frac{r_{i,j}}{r_{i+1,2j-1}} = \frac{L_{i,j}}{L_{i+1,2j-1}} \left(\frac{R_{i+1,2j-1}}{R_{i,j}} \right)^4 \textrm{~and~} 
x_{i+1,2j} = \frac{r_{i,j}}{r_{i+1,2j}}= \frac{L_{i,j}}{L_{i+1,2j}} \left(\frac{R_{i+1,2j}}{R_{i,j}} \right)^4.
\end{equation}
\par Moreover, we assume an identical reduction ratio for the radius and the length between a mother branch and its daughter, thus
\begin{equation}
x_{i+1,2j-1} = \left(\frac{R_{i+1,2j-1}}{R_{i,j}} \right)^3  \textrm{~and~} x_{i+1,2j} = \left(\frac{R_{i+1,2j}}{R_{i,j}} \right)^3.
\end{equation}\ \\
\par Figure \ref{treePicture} shows an example of a dyadic tree and of our notations in the special case where $N$ is equal to 2.
\begin{psfrags}
\psfrag{phi}{$\Phi$}
\psfrag{p0}{$p_0$}
\psfrag{p1}{$p_1$}
\psfrag{p11}{$p_{1,1}$}
\psfrag{p12}{$p_{1,2}$}
\psfrag{p21}{$p_{2,1}$}
\psfrag{p22}{$p_{2,2}$}
\psfrag{p23}{$p_{2,3}$}
\psfrag{p24}{$p_{2,4}$}
\psfrag{q11}{$q_{1,1}$}
\psfrag{q12}{$q_{1,2}$}
\psfrag{q21}{$q_{2,1}$}
\psfrag{q22}{$q_{2,2}$}
\psfrag{q23}{$q_{2,3}$}
\psfrag{q24}{$q_{2,4}$}
\psfrag{x11}{\small $x_{1,1}$}
\psfrag{x12}{\small $x_{1,2}$}
\psfrag{x21}{\small $x_{2,1}$}
\psfrag{x22}{\small $x_{2,2}$}
\psfrag{x23}{\small $x_{2,3}$}
\psfrag{x24}{\small $x_{2,4}$}
\psfrag{i_1}{$i=1$}
\psfrag{i_2}{$i=2$}
\psfrag{r0}{$r_0$}
\begin{figure}[!h]
\begin{center}
\includegraphics[height=5cm]{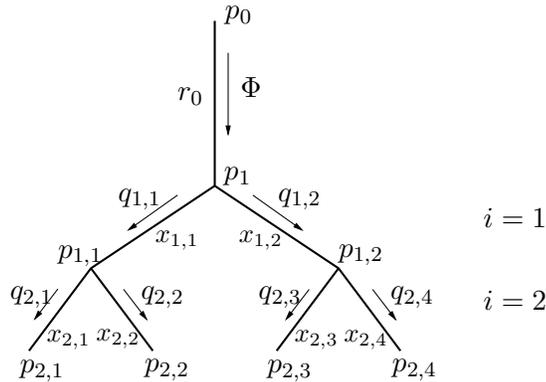}
\caption{An example of dyadic tree with three generations ($N=2$). Note that the flows $q_{i,j}$ can be either positive or negative ($(i,j) \in \mathcal{B}_N$).}
\label{treePicture}
\end{center}
\end{figure}
\end{psfrags}
\par Our goal is to establish a relationship between the $2^{N}$ pressures and the $2^{N}$ volumetric flow rates at the outlets of the tree. To go further, we need to be able to follow the fluid through paths in the tree. Therefore,we define the notions of path and subpath in the tree.\\
\begin{dfntn}Notion of path and subpath.
\begin{enumerate}
\item A path $\Pi_{0 \to (i,j)}$ (with $(i,j) \in \mathcal{B}_N$) is the set of couples of indexes of the $i$ branches needed to link the root branch denoted by $0$ and the branch located by $(i,j)$ in the tree. It includes the branch referred to by $(i,j)$ but not the root branch. More precisely,
\begin{equation}
\Pi_{0 \to (i,j)}=\left\{\left( 1,m_{i}\right),\ldots,\left(i-1,m_2\right),(i,m_1)\right\},
\end{equation}
where $(m_k)_{1\leq k\leq i}$ denotes the sequence of positive integers defined by
$$
\left\{
\begin{array}{l}
m_1=j\\
m_{k+1}=\displaystyle \left[\frac{m_k+1}{2}\right], \ \forall k\in \llbracket 1,i-1\rrbracket .
\end{array}
\right.
$$
\item Let $(i,j) \in \mathcal{B}_N$ and $ s \in \llbracket 0,i \rrbracket $. For a given path $\Pi_{0 \to (i,j)}$, the subpath $\Pi_{0 \to (i,j)}(s)$ is the set of couples of indexes of the $m$ branches needed to link the root branch and a branch located at the $s$-th level following a part of the path $\Pi_{0 \to (i,j)}$. More precisely, the subpath $\Pi_{0 \to (i,j)}(s)$ is the subset of $\Pi_{0 \to (i,j)}$ defined by
\begin{equation}
\Pi_{0 \to (i,j)}(s) = \left \{
\begin{array}{ll}
\left\{\left( 1,m_i\right),\ldots,\left(s,m_{i-s+1}\right)\right\} & \textrm{if }s \geq 1\\
\emptyset & \textrm{if }s = 0.\\
\end{array}
\right.
\end{equation}
\end{enumerate}
\end{dfntn}\ \\
\par We use this definition to do a change of variable. It will be very useful in the second part of this section devoted to the optimization of a given criterion with respect to the geometry of the tree represented by the $2^{N+1}-2$ variables $\{x_{i,j}\}_{(i,j) \in \mathcal{B}_N}$. From now on, we replace the variables $x_{i,j}$ by the new ones $\xi_{i,j}$ defined by
\begin{equation}
\forall (i,j) \in \mathcal{B}_N, \ \xi_{i,j}  = \prod_{(k,l) \in \Pi_{0 \to (i,j)}}x_{k,l}. 
\end{equation}

\par We will impose another constraint on the geometry: lengths and radii of the tree are assumed to decrease as we go along its levels. More precisely,
\begin{equation}\label{cond2}
\forall (i,j) \in \displaystyle \bigcup_{i \in \llbracket 1,N-1 \rrbracket} \mathcal{B}_{N,i}, \ 
\max(\xi_{i+1, 2j-1},\xi_{i+1, 2j}) \le \xi_{i,j}.
\end{equation}

It has to be noticed that the map $\{x_{i,j}\}_{(i,j) \in \mathcal{B}_N}\longmapsto \{\xi_{i,j}\}_{(i,j) \in \mathcal{B}_N}$ is obviously a $C^1$-diffeomorphism on the set of strictly positive real numbers so that it defines a change of variable.\\
Moreover, $\frac{r_0}{\xi_{i,j}}$ represents the hydrodynamic resistance $r_{i,j}$ of the pipe denoted by $(i,j)$. \\
For instance, in the case $N=2$ (see figure \ref{treePicture}), the path linking the inlet to the first outlet is $\Pi_{0 \to (2,1)} = \{(1,1);(2,1)\}$ and the geometric variables of theses branches are
$$
x_{1,1} = \frac{r_0}{r_{1,1}} = \left(\frac{R_{1,1}}{R_0}\right)^3 , \ \xi_{1,1} =x_{1,1} \textrm{, and } x_{2,1} = \frac{r_0}{r_{2,1}} = \left(\frac{R_{2,1}}{R_0}\right)^3  
, \ \xi_{2,1} =x_{1,1}x_{1,2}.
$$

Since the tree is dyadic, it is usual to compute the total volume $V$ of the tree by summing the volume of each cylindrical branch, \textit{i.e.}
\begin{eqnarray}
\textnormal{Volume} & = & \pi R_0^2 L_0 + \sum_{(i,j) \in \mathcal{B}_N} \pi R_{i,j}^2 L_{i,j} = \pi R_0^2 L_0 \left ( 1 + \sum_{(i,j) \in \mathcal{B}_N} \left(\frac{R_{i,j}}{R_0} \right)^2 \frac{L_{i,j}}{L_0} \right ) \nonumber \\
  & = & \pi R_0^2 L_0 \left ( 1 + \sum_{(i,j) \in \mathcal{B}_N} \left(\frac{R_{i,j}}{R_0} \right)^3 \right ) = \pi R_0^2 L_0 \left ( 1 + \sum_{(i,j) \in \mathcal{B}_N} \prod_{(k,l) \in \Pi_{0 \to (i,j)}}x_{k,l} \right ) \nonumber \\
& = & \pi R_0^2 L_0 \left ( 1 + \sum_{(i,j) \in \mathcal{B}_N} \xi_{i,j} \right ).\label{defVol}
\end{eqnarray}

Notice that this volume is an approximation of that of the real tree, since it takes into account only the volumes of the cylinders composing the tree and not the volumes of the bifurcations.\\

Now, let us define
\begin{itemize}
\item the vector $\boldsymbol{p}$ containing all the pressures at the outlet of the tree, \textit{i.e.} 
$$
\boldsymbol{p} = (p_{N,j})_{j \in \llbracket 1,2^{N} \rrbracket}^\top\in (\mathbb{R}_+^*)^{2^{N} \times 1},
$$
\item the vector $\boldsymbol{q}$ containing all the volumetric flow rates at the leaves of the tree, \textit{i.e.} 
$$
\boldsymbol{q} =  (q_{N,j})_{j \in \llbracket 1,2^{N} \rrbracket}^\top\in \mathbb{R}^{2^{N} \times 1},
$$
\item the vector $\boldsymbol{\xi}$ representing the resistances of the tree, \textit{i.e.} 
$$
\boldsymbol{\xi} =  (\xi_{1,1},\xi_{1,2}, \ldots,\xi_{N,1},\ldots,\xi_{N,2^{N}})^\top \in (\mathbb{R}_+^*)^{(2^{N+1}-2) \times 1}.
$$
\end{itemize}

\begin{dfntn}
Given two positive integers $i$ and $j$ and their binary expressions
$$
i=\sum_{k=0}^\infty \alpha_k 2^k, \ j=\sum_{k=0}^\infty \beta_k 2^k \textrm{ with } (\alpha_k,\beta_k) \in \{0,1\}^2, \ \forall k\in\mathbb{N} ,
$$
we define $\nu_{i,j}$ as
$$
\nu_{i,j} = \min\{k \in\mathbb{N} \mid \alpha_l = \beta_l, \forall l \geq k\}.
$$
\end{dfntn}
Thanks to Poiseuille's law, we are able to establish a linear relationship between the vectors $\boldsymbol{p}$ and $\boldsymbol{q}$.
\begin{prpstn}\label{poiseuilleRelationProp}
Let $N\in\mathbb{N}^*$. One has
\begin{equation}
\label{poiseuilleRelation}
 p_0 \boldsymbol{u_N} - \boldsymbol{p} = A_N(\boldsymbol{\xi}) \boldsymbol{q},
\end{equation}
where 
\begin{itemize}
\item the symmetric matrix $A_N(\boldsymbol{\xi}) \in \mathbb{R}^{2^{N} \times 2^{N}}$ is called resistance matrix of the tree and is defined by
\begin{equation} 
A_N(\boldsymbol{\xi}) = (a_{i,j}^N)_{1 \leq i,j \leq 2^N}, \ a_{i,j}^N = \left\{
\begin{array}{ll}
\displaystyle r_0+\sum_{(k,l) \in \Pi_{0 \to (N,i)}(N - \nu_{i-1,j-1})} \frac{r_0}{\xi_{k,l}} & \textrm{if }\nu_{i-1,j-1} > 0\\
r_0 & \textrm{if }\nu_{i-1,j-1} = 0\\
\end{array}
\right.
\end{equation}
\item $\boldsymbol{u_N} \in \mathbb{R}^{2^{N} \times 1}$ denotes the ones vector \textit{i.e.}~
$\boldsymbol{u_N} = (1, \ldots, 1)^\top $.
\end{itemize}
\end{prpstn}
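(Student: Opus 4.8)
The plan is to prove \eqref{poiseuilleRelation} by direct computation from Poiseuille's law together with conservation of the flow rate, reading off the entry $a_{i,j}^N$ as the coefficient of $q_{N,j}$ in the expression of $p_0-p_{N,i}$. For a branch $(k,l)\in\mathcal{B}_N$, let $\mathcal{D}(k,l)\subset\llbracket 1,2^N\rrbracket$ be the set of indices of the outlets that descend from $(k,l)$. The first step is to record the elementary consequence of incompressibility and of conservation at each bifurcation: the flow through any branch equals the sum of the outlet flow rates it feeds, i.e.\ writing $Q_{k,l}$ for the flow through $(k,l)$ one has $Q_{k,l}=\sum_{j\in\mathcal{D}(k,l)}q_{N,j}$, and in particular $\Phi=\sum_{j=1}^{2^N}q_{N,j}$ at the root. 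This is a short induction on the number of levels below $(k,l)$, using the bifurcation relation $q_{i,j}=q_{i+1,2j-1}+q_{i+1,2j}$.

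Next I would express the pressure at a given outlet by telescoping the pressure drops along the path from the inlet. Applying Poiseuille's law, namely the drop $r_0\Phi$ at the root and $r_{k,l}Q_{k,l}$ across each branch $(k,l)\in\Pi_{0\to(N,i)}$, yields
\begin{equation*}
p_0-p_{N,i}=r_0\sum_{j=1}^{2^N}q_{N,j}+\sum_{(k,l)\in\Pi_{0\to(N,i)}}r_{k,l}\sum_{j\in\mathcal{D}(k,l)}q_{N,j}.
\end{equation*}
Exchanging the two summations and collecting the coefficient of each $q_{N,j}$ identifies $a_{i,j}^N=r_0+\sum r_{k,l}$, the sum being over those branches $(k,l)\in\Pi_{0\to(N,i)}$ that moreover satisfy $j\in\mathcal{D}(k,l)$. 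Such a branch is precisely a common ancestor of the outlets $(N,i)$ and $(N,j)$, so the sum runs exactly over the shared initial segment of the two paths; substituting $r_{k,l}=r_0/\xi_{k,l}$ gives the announced form, up to the identification of that segment.

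The main obstacle, and the only genuinely combinatorial point, is to show that this shared initial segment is the subpath $\Pi_{0\to(N,i)}(N-\nu_{i-1,j-1})$. Here I would use that an outlet index $j\in\llbracket 1,2^N\rrbracket$ is encoded by the $N$-bit binary expansion of $j-1$, the most significant bit corresponding to the first bifurcation (level $1$) and the least significant to the last (level $N$); this is consistent with the recursion $m_{k+1}=[(m_k+1)/2]$ defining the path, which drops the lowest-order bit, hence the last bifurcation choice, at each step. Two outlets share their first $s$ branches if and only if the $s$ most significant bits of $i-1$ and $j-1$ coincide, that is $\alpha_l=\beta_l$ for all $l\geq N-s$. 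By the very definition of $\nu_{i-1,j-1}$ as the least index above which the two expansions agree, the largest such $s$ equals $N-\nu_{i-1,j-1}$, so the common ancestors are exactly the branches of $\Pi_{0\to(N,i)}(N-\nu_{i-1,j-1})$. I expect the care needed here to lie in the bookkeeping between level indices and bit positions and in the boundary cases.

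Finally, the symmetry of $A_N(\boldsymbol{\xi})$ is immediate, since the set of common ancestors, and hence the summed resistances, is invariant under the exchange of $i$ and $j$. The two extreme cases match the statement: when $\nu_{i-1,j-1}=N$ the paths split already at the root, the shared segment is empty ($\Pi_{0\to(N,i)}(0)=\emptyset$) and $a_{i,j}^N=r_0$, whereas on the diagonal $i=j$ one has $\nu_{i-1,i-1}=0$, the whole path is common, and one recovers $r_0+\sum_{(k,l)\in\Pi_{0\to(N,i)}}r_0/\xi_{k,l}$. As an alternative I note that the block identity $A_N(\boldsymbol{\xi})=r_0\,\boldsymbol{u_N}\boldsymbol{u_N}^\top+\bigl(A_{N-1}^{L}\oplus A_{N-1}^{R}\bigr)$, where $A_{N-1}^{L}$ and $A_{N-1}^{R}$ are the resistance matrices of the two subtrees rooted at the level-$1$ branches, gives a clean proof by induction on $N$ using the direct sum introduced above, the base case $N=1$ being the explicit $2\times 2$ matrix; the combinatorial identification of $\nu$ can then be transported through the recursion rather than checked on each path separately.
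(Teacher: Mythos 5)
Your proof is correct, and it reaches the same formula as the paper by a transposed route. The paper exploits linearity in $\boldsymbol{q}$: it sets $\boldsymbol{q}=\boldsymbol{e_j}$ (all flow exiting through the single outlet $(N,j)$), observes that branches carrying no flow produce no pressure drop, and reads off the $j$-th column of $A_N(\boldsymbol{\xi})$ as the cumulative resistance along the portion of $\Pi_{0\to(N,j)}$ shared with $\Pi_{0\to(N,i)}$. You instead work row by row for an arbitrary $\boldsymbol{q}$: telescope the pressure drops along $\Pi_{0\to(N,i)}$, write each branch flow as the sum of the outlet flows over its descendants, and exchange the two sums to identify $a_{i,j}^N$ as $r_0$ plus the resistances of the common ancestors of outlets $i$ and $j$. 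The two arguments are dual organizations of the same bilinear computation, resting on the same ingredients (Poiseuille's law, conservation at bifurcations, and the identification of the shared path segment). What your write-up adds is an explicit proof of the combinatorial step --- that the shared initial segment is $\Pi_{0 \to (N,i)}(N-\nu_{i-1,j-1})$, via the correspondence between the recursion $m_{k+1}=[(m_k+1)/2]$ and dropping the least significant bit of $j-1$ --- which the paper only asserts in a remark; your reading of the extreme cases (full path on the diagonal where $\nu_{i-1,i-1}=0$, empty segment when $\nu_{i-1,j-1}=N$) is the one consistent with the displayed example $A_2(\boldsymbol{\xi})$ and with the formula obtained at the end of the paper's own proof. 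The alternative block recursion $A_N(\boldsymbol{\xi})=r_0\,\boldsymbol{u_N}\boldsymbol{u_N}^\top+\bigl(A_{N-1}^{L}\oplus A_{N-1}^{R}\bigr)$ you sketch is not in the paper and would give a cleaner inductive proof, at the cost of transporting the $\nu$-bookkeeping through the recursion.
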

\begin{rmrk}
$\Pi_{0 \to (N,i)}(N - \nu_{i-1,j-1})$ is the subpath corresponding to the intersection of the two paths $\Pi_{0 \to (N,i)}$ and $\Pi_{0 \to (N,j)}$.
\end{rmrk}
This proposition is proved in appendix \ref{app1-1}. A close result has already been proved in \cite{mauryMeunier}.

The quantity 
\begin{equation}\label{defNRJ}
\mathcal{E}(\boldsymbol{q},\xi) = \boldsymbol{q}^\top A_N(\boldsymbol{\xi})\boldsymbol{q}
\end{equation} 
corresponds to the total viscous energy dissipated by the fluid in the tree during one second, see \cite{mauryMeunier, mauroy-meunier}, \textit{i.e.}

\begin{equation}
\mathcal{E}(\boldsymbol{q},\boldsymbol{\xi})
                = r_0\left(\Phi^2+\frac{\left(\sum_{j = 1}^{2^{N-1}} q_{N,j}\right)^2}{\xi_{1,1}} + \frac{\left(\sum_{j = 2^{N-1} + 1}^{2^{N}}q_{N,j}\right)^2}{\xi_{1,2}} + \dots + \sum_{j = 1}^{2^N} \frac{q_{N,j}^2}{\xi_{N,j}}+\sum_{j=1}^{2^N} q_{N,j}^2\right).       
\end{equation}

Using the fact that the flow rate is conserved all along the pipe, we rewrite the dissipated energy as
$$
\mathcal{E}(\boldsymbol{q},\boldsymbol{\xi})
                = r_0\Phi^2+\sum_{(i,j)\in \mathcal{B}_N}r_0\frac{q_{i,j}^2}{\xi_{i,j}}.       
$$

As a consequence, $A_N(\boldsymbol{\xi})$ is a symmetric positive definite matrix. It justifies the following proposition. 

\begin{prpstn}\label{Ainvertible}
The resistance matrix $A_N(\boldsymbol{\xi}) \in \mathbb{R}^{2^{N} \times 2^{N}}$ is invertible.
\end{prpstn}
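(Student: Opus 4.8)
The plan is to deduce invertibility from the positive definiteness of $A_N(\boldsymbol{\xi})$, which has already been announced just above the statement and which I would establish rigorously from the energy identity \eqref{defNRJ}. Since Proposition \ref{poiseuilleRelationProp} guarantees that $A_N(\boldsymbol{\xi})$ is symmetric, it suffices to show that its associated quadratic form $\boldsymbol{q} \mapsto \boldsymbol{q}^\top A_N(\boldsymbol{\xi}) \boldsymbol{q}$ is positive definite; invertibility then follows immediately, because any $\boldsymbol{q}$ lying in the kernel would satisfy $\boldsymbol{q}^\top A_N(\boldsymbol{\xi}) \boldsymbol{q} = 0$ and would therefore have to be zero.

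First I would invoke the rewriting of the dissipated energy obtained just before the statement, namely $\boldsymbol{q}^\top A_N(\boldsymbol{\xi}) \boldsymbol{q} = \mathcal{E}(\boldsymbol{q}, \boldsymbol{\xi}) = r_0 \Phi^2 + \sum_{(i,j) \in \mathcal{B}_N} r_0 q_{i,j}^2 / \xi_{i,j}$, where each internal flow $q_{i,j}$ and the inlet flow $\Phi = \sum_{j=1}^{2^N} q_{N,j}$ are linear functions of the outlet flows collected in $\boldsymbol{q}$, determined by conservation of the flow at the bifurcations. Because $r_0 > 0$ and every $\xi_{i,j} > 0$, this expression is a sum of nonnegative terms, so the quadratic form is nonnegative and $A_N(\boldsymbol{\xi})$ is at least positive semidefinite.

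It then remains to rule out a nonzero vector with vanishing energy. The key observation is that the level $i = N$ of $\mathcal{B}_N$ consists precisely of the outlet branches, whose flows are the coordinates of $\boldsymbol{q}$ themselves, so that the corresponding contribution to the energy is exactly $\sum_{j=1}^{2^N} r_0 q_{N,j}^2 / \xi_{N,j}$. If $\boldsymbol{q}^\top A_N(\boldsymbol{\xi}) \boldsymbol{q} = 0$, then each nonnegative term of the sum vanishes, and in particular $r_0 q_{N,j}^2 / \xi_{N,j} = 0$ for every $j$, which forces $q_{N,j} = 0$ and hence $\boldsymbol{q} = 0$. This establishes positive definiteness and therefore invertibility. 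The only point requiring care — and the sole mild obstacle — is to make explicit that the last-level branch flows coincide with the entries of $\boldsymbol{q}$, so that the vanishing of these particular terms already pins down the whole vector; the remaining internal and inlet contributions are not even needed for the conclusion.
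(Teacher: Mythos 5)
Your argument is correct and matches the paper's own justification: the paper likewise deduces invertibility from positive definiteness of $A_N(\boldsymbol{\xi})$, obtained by reading the quadratic form $\boldsymbol{q}^\top A_N(\boldsymbol{\xi})\boldsymbol{q}$ as the dissipated energy, a sum of squares with positive coefficients. Your additional remark that the level-$N$ terms $r_0 q_{N,j}^2/\xi_{N,j}$ alone force $\boldsymbol{q}=0$ is exactly the point that upgrades semidefiniteness to definiteness, and is the right way to make the paper's one-line assertion rigorous.
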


\begin{xmpl}
So as to have an idea of the structure of the matrix $A_N(\boldsymbol{\xi})$, let us use an example of a tree with $N = 2$ levels (thus, with $N+1 = 3$ generations, $2^{N-1} = 4$ outlets and $2^{N} - 1 = 7$ branches). Its resistance matrix $A_2(\boldsymbol{\xi}) \in \mathbb{R}^{4 \times 4} $ is defined as follows:
$$
A_{2}(\boldsymbol{\xi}) = r_0
\left (\begin{array}{cccc}
1+\frac{1}{\xi_{1,1}} + \frac{1}{\xi_{2,1}} & 1+\frac{1}{\xi_{1,1}} & 1 & 1 \\
1+\frac{1}{\xi_{1,1}} & 1+\frac{1}{\xi_{1,1}} + \frac{1}{\xi_{2,2}} & 1 & 1 \\

1 & 1 & 1+\frac{1}{\xi_{1,2}}+\frac{1}{\xi_{2,3}} & 1+\frac{1}{\xi_{1,2}} \\
1 & 1 & 1+\frac{1}{\xi_{1,2}} & 1+\frac{1}{\xi_{1,2}} + \frac{1}{\xi_{2,4}}\\
\end{array} \right)
$$
The tree corresponding to this example is drawn on the figure \ref{treePicture}.\\
\end{xmpl}

\subsection{Boundary conditions}


For biological systems like the vascular network or the lung, it is very difficult to know what are the correct boundary conditions since there exists complex feedback loops that are able to modify the forces applied by muscles relatively to the behaviour of physiological values. Typically, lungs ventilation is controlled not only by carbon dioxide and oxygen concentration in blood but also by blood pH and by mechanical sensors distributed along the tree \cite{Raux}. Concerning boundary conditions in the frame of fluid mechanics, one can refer to \cite{BF} and \cite{Heywood} and in the context of blood flow modeling to \cite{Quarteroni}.

We will investigate the two major types of boundary conditions at the openings of the tree. We will either impose a flow (or parabolic velocity profile) which corresponds to a quantity of fluid that would circulate in the branch, or impose a pressure which corresponds to a force that would be applied to the opening of the branch. If flow rates are imposed at the outlets of the tree, then the pressure will be imposed at the inlet and vice versa so that the problem is well posed. Indeed, the two cases studied in this work are\\ 


\begin{itemize}
\item {\bf 1st case: pressure is imposed at the inlet and flow rates at the outlets.}
\item {\bf 2nd case: flow rate is imposed at the inlet and pressures at the outlets.}\\
\end{itemize}

In the 2nd case, the flows at outlets are not directly accessible. Thus, the following proposition gives the existence of $\boldsymbol{q}$ as the solution of a linear system.

\begin{prpstn}\label{defq}
Assume that the pressures at the outlets of the tree are fixed and the flow in the tree root equal to $\Phi$. 
Then, the vector $\boldsymbol{q} \in \mathbb{R}^{2^N \times 1}$ is the unique solution of the linear system
\begin{equation}
 M_N(\boldsymbol{\xi})\boldsymbol{q} = \boldsymbol{b_N}
\end{equation}
 where 
\begin{equation}
M_N(\boldsymbol{\xi}) = 
\left (\begin{array}{c}
(A_N(\boldsymbol{\xi})\boldsymbol{v_1})^\top \\
\vdots\\
(A_N(\boldsymbol{\xi})\boldsymbol{v_{2^{N}-1}})^\top  \\
\boldsymbol{u_N}^\top 
\end{array} \right) \in \mathbb{R}^{2^{N} \times 2^{N}}, \ 
\boldsymbol{b_N} =
\left (\begin{array}{c}
- \langle \boldsymbol{p},\boldsymbol{v_1}\rangle\\
\vdots\\
- \langle\boldsymbol{p},\boldsymbol{v_{2^{N}-1}}\rangle\\
\Phi\\
\end{array} \right)
\in \mathbb{R}^{2^{N} \times 1}
\end{equation}
and for all $i \in \llbracket 1,2^{N}-1 \rrbracket, 
\ \boldsymbol{v_i}  
= 
(0, \ldots, 0, 1, -1, 0, \ldots,0)^\top  
\in  \mathbb{R}^{2^{N} \times 1}$,
with 1 at the $i$-th position and $-1$ at the $(i+1)$-th position.
\end{prpstn}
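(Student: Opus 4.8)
The plan is to establish two separate facts: first, that the physical flow-rate vector $\boldsymbol{q}$ does satisfy the announced linear system, and second, that the matrix $M_N(\boldsymbol{\xi})$ is invertible, so that this solution is the unique one.

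For the first fact, I would read off the system row by row. The last row encodes conservation of mass: since the fluid is incompressible and the flow rate is conserved at every bifurcation, the sum of the outlet flow rates equals the flow entering at the root, that is $\boldsymbol{u_N}^\top \boldsymbol{q} = \sum_{j=1}^{2^N} q_{N,j} = \Phi$, which is exactly the last equation. For the remaining $2^N - 1$ rows, I would start from the Poiseuille relation of Proposition \ref{poiseuilleRelationProp}, namely $p_0 \boldsymbol{u_N} - \boldsymbol{p} = A_N(\boldsymbol{\xi})\boldsymbol{q}$, and take the inner product of both sides with $\boldsymbol{v_i}$ for each $i \in \llbracket 1, 2^N - 1 \rrbracket$. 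Because each $\boldsymbol{v_i} = \boldsymbol{e_i} - \boldsymbol{e_{i+1}}$ has components summing to zero, one has $\langle \boldsymbol{u_N}, \boldsymbol{v_i}\rangle = 0$, so the left-hand side collapses to $-\langle \boldsymbol{p}, \boldsymbol{v_i}\rangle$. On the right-hand side, the symmetry of $A_N(\boldsymbol{\xi})$ gives $\langle A_N(\boldsymbol{\xi})\boldsymbol{q}, \boldsymbol{v_i}\rangle = (A_N(\boldsymbol{\xi})\boldsymbol{v_i})^\top \boldsymbol{q}$. This reproduces precisely the $i$-th row of the system, so $\boldsymbol{q}$ is indeed a solution.

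For the second fact, I would show that the kernel of $M_N(\boldsymbol{\xi})$ is trivial. Suppose $M_N(\boldsymbol{\xi})\boldsymbol{w} = 0$. The first $2^N - 1$ rows give $\langle A_N(\boldsymbol{\xi})\boldsymbol{w}, \boldsymbol{v_i}\rangle = 0$ for every $i$, again by symmetry. The family $(\boldsymbol{v_i})_{1 \le i \le 2^N - 1}$ consists of $2^N - 1$ linearly independent vectors all orthogonal to $\boldsymbol{u_N}$, hence forms a basis of the hyperplane $\boldsymbol{u_N}^\perp$. Therefore $A_N(\boldsymbol{\xi})\boldsymbol{w}$ is orthogonal to this whole hyperplane, which means $A_N(\boldsymbol{\xi})\boldsymbol{w} = \lambda \boldsymbol{u_N}$ for some scalar $\lambda$. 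The last row gives $\boldsymbol{u_N}^\top \boldsymbol{w} = 0$, so $\boldsymbol{w} \in \boldsymbol{u_N}^\perp$. Combining these, $\langle A_N(\boldsymbol{\xi})\boldsymbol{w}, \boldsymbol{w}\rangle = \lambda \langle \boldsymbol{u_N}, \boldsymbol{w}\rangle = 0$. Since the discussion following \eqref{defNRJ} shows that $A_N(\boldsymbol{\xi})$ is symmetric positive definite, this forces $\boldsymbol{w} = 0$, which proves invertibility and hence uniqueness.

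I expect the verification step to be essentially mechanical once the orthogonality $\langle \boldsymbol{u_N}, \boldsymbol{v_i}\rangle = 0$ and the symmetry of $A_N(\boldsymbol{\xi})$ are noted. The only genuinely substantive point is the invertibility argument, and there the crux is recognizing that the $\boldsymbol{v_i}$ span $\boldsymbol{u_N}^\perp$ exactly; this is what confines $A_N(\boldsymbol{\xi})\boldsymbol{w}$ to the line $\mathbb{R}\boldsymbol{u_N}$ and then lets me eliminate it against positive definiteness. No delicate estimates are needed: the whole argument is linear algebra resting on the two structural properties of $A_N(\boldsymbol{\xi})$, symmetry and positive definiteness, already secured earlier in the section.
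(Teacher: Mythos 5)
Your proposal is correct and follows essentially the same route as the paper: the system is obtained by pairing the Poiseuille relation of Proposition \ref{poiseuilleRelationProp} with the vectors $\boldsymbol{v_i}$ (using $\langle\boldsymbol{u_N},\boldsymbol{v_i}\rangle=0$ and the symmetry of $A_N(\boldsymbol{\xi})$) together with conservation of the flow rate, and invertibility rests on the $\boldsymbol{v_i}$ forming a basis of $\{\boldsymbol{u_N}\}^{\perp}$ plus the positive definiteness of $A_N(\boldsymbol{\xi})$. The only cosmetic difference is that you prove $\ker M_N(\boldsymbol{\xi})=\{0\}$ directly, whereas the paper proves the linear independence of the rows by testing a vanishing combination against $A_N(\boldsymbol{\xi})^{-1}\boldsymbol{u_N}$; the two arguments are dual and use the same ingredients.
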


See appendix \ref{app1-2} for the proof of this proposition.


\section{The optimization problems}
In this section, we will study the two finite-dimensional constrained optimization problems corresponding to two types of boundary conditions:
\begin{itemize}
\item {1st case: pressure is imposed at the inlet and flows at the outlets,}
\item {2nd case: flow is imposed at the inlet and pressures at the outlets.}\\
\end{itemize}

We consider the same rigid dyadic tree as in section \ref{The model} with $N+1$ generations ($N \in \mathbb{N}, \ N \geq 1$) through which flows the fluid introduced previously. The tree is characterized by its geometry $\boldsymbol{\xi}$ ($\boldsymbol{\xi} \in (\mathbb{R}_+^*)^{(2^{N+1}-2) \times 1})$, its resistance matrix $A_N(\boldsymbol{\xi})$ ($A_N(\boldsymbol{\xi}) \in \mathbb{R}^{2^{N} \times 2^{N}}$) and the volumetric flow rates $\boldsymbol{q}$ ($\boldsymbol{q} \in \mathbb{R}^{2^N \times 1}$) at its outlet. We recall that, at outlets, pressures are linked to flow rates with the relation $\boldsymbol{p}=A_N(\boldsymbol{\xi}) \boldsymbol{q}$.
\par We want to minimize the total viscous dissipated energy $\mathcal{E}$ defined by (\ref{defNRJ}) with respect to the pair $(\boldsymbol{q},\boldsymbol{\xi})$ under the constraints
\begin{itemize}
\item that $\boldsymbol{q}$ is the flow vector at outlets of the tree, when the Poiseuille's model is considered. This condition can always be treated as a constraint, it takes however different forms depending on the case considered.
\item and that the total volume $V$ of the tree is constrained, which from (\ref{defVol}), writes
\begin{equation}\label{cond1}
1+\sum_{(i,j) \in \mathcal{B}_N} \xi_{i,j} = \Lambda 
\end{equation}
with $\Lambda = \frac{V}{\pi R_0^2 L_0} > 1$.
\end{itemize} 


To make the admissible set of pairs $(\boldsymbol{q},\boldsymbol{\xi})$ clear, we define the intermediate set $\mathscr{A}_\Lambda$ as
\begin{equation}\label{ALambda}
\mathscr{A}_\Lambda = \left\{ 
\boldsymbol{\xi} \in (\mathbb{R}_+^*)^{(2^{N+1}-2) \times 1} \mid 
\boldsymbol{\xi} \textrm{ verifies the conditions }   (\ref{cond2})\textrm{ and }(\ref{cond1})
\right \}.
\end{equation}

\subsection{1st case: pressures at the inlet and flow rates at the outlets are known}\label{vconnu}

The first case is much simpler than the second one (presented in section \ref{pconnu}). The pressure is imposed at the inlet along with the flows vector $\boldsymbol{q}$ at the outlets, thus the optimization problem is
\begin{equation}
(\mathscr{P}_1) \ \left\{
\begin{array}{l}
\min \mathcal{E}(\boldsymbol{q},\boldsymbol{\xi})\\
\boldsymbol{\xi} \in \mathscr{A}_\Lambda ,
\end{array}\right.
\end{equation}
where $\boldsymbol{q}\in \mathbb{R}_+^{2^N\times 1}$ is assumed fixed.
\par  In this case, since the volumetric flow rate is conserved along the tree, every value of the intermediate flow rate $q_{i,j}$ is known from the values of $q_{N,j}$, $j\in \llbracket 1,2^N\rrbracket$.

\par The existence of solutions for the problem $(\mathscr{P}_1) $ is classical. Indeed, if one of the optimization variables goes to zero then the energy tends to a positive infinite value. It is thus possible to come down to minimize $\mathcal{E}(\boldsymbol{q},\cdot)$ on a compact set, which yields the existence of a solution. Moreover, $\mathscr{A}_\Lambda$ is a convex set and $\mathcal{E}(\boldsymbol{q},\cdot)$ is strictly convex, this ensures the uniqueness of the minimizer for the problem $(\mathscr{P}_1) $.
\par We will now write the first order optimality conditions: we denote by $\boldsymbol{\xi^*}$ the minimizer of the problem $(\mathscr{P}_1)$. There exists a Lagrange multiplier $\lambda\in\mathbb{R}$ such that
$$
\forall (i,j)\in \mathcal{B}_N, \ -\frac{q_{i,j}^2}{{\xi_{i,j}^*}^2}=\lambda.
$$
Using the volume constraint, we immediately obtain the following expression for the minimizer $\boldsymbol{\xi^*}$:
$$
\forall (i,j)\in \mathcal{B}_N, \ \xi_{i,j}^*=(\Lambda-1)\frac{|q_{i,j}|}{\sum_{(i,j)\in \mathcal{B}_N}|q_{i,j}|}.
$$
The following proposition summarizes the conclusions for the first case.
\begin{prpstn}
The problem $(\mathscr{P}_1)$ has a unique solution $\boldsymbol{\xi^*}$ given by
$$
\forall (i,j)\in \mathcal{B}_N, \ \xi_{i,j}^*=(\Lambda-1)\frac{|q_{i,j}|}{\sum_{(i,j)\in \mathcal{B}_N}|q_{i,j}|}.
$$
\end{prpstn}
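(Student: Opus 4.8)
The plan is to work with the reduced form of the objective,
$\mathcal{E}(\boldsymbol{q},\boldsymbol{\xi}) = r_0\Phi^2 + r_0\sum_{(i,j)\in\mathcal{B}_N} q_{i,j}^2/\xi_{i,j}$, in which $\boldsymbol{q}$ (and hence, by conservation, every intermediate flow $q_{i,j}$) is a fixed positive datum. First I would record the structural features that drive everything: on the open orthant $(\mathbb{R}_+^*)^{(2^{N+1}-2)}$ each summand $t\mapsto 1/t$ is strictly convex, so $\mathcal{E}(\boldsymbol{q},\cdot)$ is strictly convex; and $\mathscr{A}_\Lambda$, being the intersection of the affine hyperplane (\ref{cond1}) with the convex cone (\ref{cond2}), is convex. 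These two facts reduce the whole proposition to exhibiting one admissible point satisfying the first-order condition, since for a convex objective over a convex set a feasible $\boldsymbol{\xi}^*$ is the \emph{global} minimizer as soon as $\langle\nabla_{\boldsymbol{\xi}}\mathcal{E}(\boldsymbol{q},\boldsymbol{\xi}^*),\boldsymbol{\xi}-\boldsymbol{\xi}^*\rangle\ge 0$ for every $\boldsymbol{\xi}\in\mathscr{A}_\Lambda$, and strict convexity then forces uniqueness.

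The delicate point, which I would treat next, is that $\mathscr{A}_\Lambda$ carries the inequality constraints (\ref{cond2}) in addition to the single volume equality (\ref{cond1}), whereas the stationarity relation $-q_{i,j}^2/(\xi_{i,j}^*)^2=\lambda$ uses only one multiplier and so implicitly asserts that (\ref{cond2}) is inactive at the optimum. I would justify this by checking directly that the candidate $\xi_{i,j}^* = (\Lambda-1)|q_{i,j}|/\sum_{(k,l)\in\mathcal{B}_N}|q_{k,l}|$ is admissible. It satisfies (\ref{cond1}) by construction and has strictly positive entries since the $q_{i,j}$ are positive and $\Lambda>1$. For (\ref{cond2}), the crucial observation is that conservation of flow at each bifurcation reads $q_{i,j}=q_{i+1,2j-1}+q_{i+1,2j}$ with nonnegative summands, whence $\max(|q_{i+1,2j-1}|,|q_{i+1,2j}|)\le |q_{i,j}|$; multiplying by the common positive factor $(\Lambda-1)/\sum_{(k,l)}|q_{k,l}|$ yields exactly $\max(\xi_{i+1,2j-1}^*,\xi_{i+1,2j}^*)\le\xi_{i,j}^*$. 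Thus $\boldsymbol{\xi}^*\in\mathscr{A}_\Lambda$, and the monotonicity inequalities, being satisfied, never become binding at $\boldsymbol{\xi}^*$.

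It then remains to verify the variational inequality at $\boldsymbol{\xi}^*$. The key computation is that the gradient component $\partial\mathcal{E}/\partial\xi_{i,j}=-r_0 q_{i,j}^2/(\xi_{i,j}^*)^2$, evaluated at $\boldsymbol{\xi}^*$, equals the constant $C:=-r_0\big(\sum_{(k,l)}|q_{k,l}|\big)^2/(\Lambda-1)^2$ for \emph{every} $(i,j)$; that is, $\nabla_{\boldsymbol{\xi}}\mathcal{E}(\boldsymbol{q},\boldsymbol{\xi}^*)=C\,\boldsymbol{u}$ is a negative multiple of the all-ones vector. Since every admissible $\boldsymbol{\xi}$ and $\boldsymbol{\xi}^*$ share the coordinate sum $\Lambda-1$ by (\ref{cond1}), one gets $\langle\nabla_{\boldsymbol{\xi}}\mathcal{E}(\boldsymbol{q},\boldsymbol{\xi}^*),\boldsymbol{\xi}-\boldsymbol{\xi}^*\rangle=C\sum_{(i,j)\in\mathcal{B}_N}(\xi_{i,j}-\xi_{i,j}^*)=0$, so the variational inequality holds (with equality) and $\boldsymbol{\xi}^*$ is a global minimizer; strict convexity makes it the unique one. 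The main obstacle is exactly this interplay between (\ref{cond2}) and the stationarity condition: the argument succeeds because the candidate gradient is parallel to the normal of the volume constraint and thus orthogonal to every feasible direction, so that a naive Lagrangian treatment ignoring (\ref{cond2}) is vindicated \emph{a posteriori} — and it is the flow-conservation identity that guarantees feasibility in the first place.
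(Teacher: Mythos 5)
Your proof is correct, and it reaches the paper's formula by a genuinely different (and in one respect more careful) route. The paper proceeds in the classical necessary-conditions order: existence by coercivity (the energy blows up as any $\xi_{i,j}\to 0$, so one can restrict to a compact set), uniqueness by strict convexity of $\mathcal{E}(\boldsymbol{q},\cdot)$ on the convex set $\mathscr{A}_\Lambda$, and then it writes the stationarity relation $-q_{i,j}^2/(\xi_{i,j}^*)^2=\lambda$ with a \emph{single} multiplier attached to the volume equality (\ref{cond1}) and solves for $\boldsymbol{\xi}^*$. You instead guess the candidate and verify sufficiency directly: the gradient at $\boldsymbol{\xi}^*$ is a constant multiple of the all-ones vector, hence orthogonal to the affine hyperplane (\ref{cond1}) containing $\mathscr{A}_\Lambda$, so the variational inequality for convex programs holds with equality and no appeal to Kuhn--Tucker or to a separate existence argument is needed. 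What your route buys is precisely the point the paper glosses over: the admissible set also carries the monotonicity inequalities (\ref{cond2}), and a one-multiplier stationarity condition is only legitimate if those are inactive at the optimum; your observation that flow conservation $q_{i,j}=q_{i+1,2j-1}+q_{i+1,2j}$ with nonnegative flows forces $\max(\xi^*_{i+1,2j-1},\xi^*_{i+1,2j})\le\xi^*_{i,j}$ supplies exactly the missing feasibility check and vindicates the paper's computation a posteriori. The only caveat, shared with the paper, is the implicit assumption that every $q_{i,j}$ is nonzero: otherwise the corresponding $\xi^*_{i,j}$ would vanish, the candidate would leave the open orthant, and strict convexity (hence uniqueness) would fail in that coordinate; it is worth stating this hypothesis explicitly.
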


\subsection{2nd Case: flow rate at the inlet and the pressures at the outlets are known}\label{pconnu}

\par The volumetric flow rate $\Phi>0$ which enters the overall tree through the root branch and the pressures at the outlets of the tree (\textit{i.e.} ~ the vector $\boldsymbol{p}$, $\boldsymbol{p} \in (\mathbb{R}_+^*)^{2^N \times 1}$) are assumed to be fixed.
\par According to Proposition \ref{defq}, the constraint $\boldsymbol{q}=\left(M_N(\boldsymbol{\xi})\right)^{-1}\boldsymbol{b_N}$ comes down to impose an incompressible and Poiseuille-like flow through the tree.
\par Now, let us define the set $\mathscr{U}_\Lambda$ of admissible pairs $(\boldsymbol{q}, \boldsymbol{\xi})$ as
\begin{equation}
\mathscr{U}_\Lambda =
\left\{
(\boldsymbol{q},\boldsymbol{\xi}) \mid \boldsymbol{\xi} \in \mathscr{A}_\Lambda \textrm{ and } \boldsymbol{q}=\left(M_N(\boldsymbol{\xi})\right)^{-1}\boldsymbol{b_N}
\right \}.
\end{equation}
The resultant optimization problem writes 
\begin{equation}
(\mathscr{P}_2) \left \{
\begin{array}{ll}
\min \mathcal{E}(\boldsymbol{q}, \boldsymbol{\xi})\\
(\boldsymbol{q},\boldsymbol{\xi}) \in \mathscr{U}_\Lambda.
\end{array}
\right.
\end{equation}
We claim that generically with respect to the vector $\boldsymbol{p}$, the problem $(\mathscr{P}_2)$ has no solution. Nevertheless, in the degenerate situation where all the pressures at the outlet of the tree are equal, the problem $(\mathscr{P}_2)$ has a unique solution.

\par We will start with the case where $(\mathscr{P}_2)$ has a solution. The following theorem gives a characterization of this situation.
\begin{thrm}\label{theoExistenceSol}
The problem $(\mathscr{P}_2)$ has a solution if, and only if there exists $\pi^*\in \mathbb{R}$ such that
$$
\boldsymbol{p}= \pi^*\boldsymbol{u_N},
$$

where $\boldsymbol{u_N}\in \mathbb{R}^{2^N\times 1}$ denotes the unit vector $(1,...,1)^T$. Furthermore,
$$
\min_{(\boldsymbol{q},\boldsymbol{\xi}) \in \mathscr{U}_\Lambda} \mathcal{E}(\boldsymbol{q}, \boldsymbol{\xi})=r_0 \Phi^2 \left(1+ \frac{N^2}{\Lambda - 1} \right).
$$
\end{thrm}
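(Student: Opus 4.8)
The plan is to exploit the additive form of the dissipated energy,
\[
\mathcal{E}(\boldsymbol{q},\boldsymbol{\xi}) = r_0\Phi^2 + r_0\sum_{(i,j)\in\mathcal{B}_N} \frac{q_{i,j}^2}{\xi_{i,j}},
\]
and to extract from it a lower bound on $\mathscr{U}_\Lambda$ that does not depend on $\boldsymbol{p}$. First I would note that flow conservation forces $\sum_{j=1}^{2^i} q_{i,j}=\Phi$ at every level $i$, whence $\sum_{j=1}^{2^i}|q_{i,j}|\ge\Phi$ with equality exactly when all branch flows of level $i$ are nonnegative; summing over $i$ gives $\sum_{(i,j)\in\mathcal{B}_N}|q_{i,j}|\ge N\Phi$. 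Combining this with the Cauchy--Schwarz inequality $\big(\sum_{(i,j)}q_{i,j}^2/\xi_{i,j}\big)\big(\sum_{(i,j)}\xi_{i,j}\big)\ge\big(\sum_{(i,j)}|q_{i,j}|\big)^2$ and the volume constraint $\sum_{(i,j)}\xi_{i,j}=\Lambda-1$ of (\ref{cond1}) yields, for every $(\boldsymbol{q},\boldsymbol{\xi})\in\mathscr{U}_\Lambda$ and every $\boldsymbol{p}$,
\[
\mathcal{E}(\boldsymbol{q},\boldsymbol{\xi})\ \ge\ r_0\Phi^2\Big(1+\tfrac{N^2}{\Lambda-1}\Big).
\]
The equality analysis is the crux: this bound is saturated if and only if all $q_{i,j}\ge0$ (equality in the triangle inequality) and $\xi_{i,j}$ is proportional to $|q_{i,j}|$ (equality in Cauchy--Schwarz), which together with the volume constraint force $\xi_{i,j}=\frac{\Lambda-1}{N\Phi}\,q_{i,j}$.

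For the ``if'' part I would exhibit the symmetric tree $\xi_{i,j}^\ast=(\Lambda-1)/(N2^i)$. When $\boldsymbol{p}=\pi^\ast\boldsymbol{u_N}$, the invariance of the tree and of the boundary data under the permutations swapping sibling subtrees, combined with the uniqueness statement of Proposition \ref{defq}, shows that the induced flow is the symmetric one $q_{i,j}=\Phi/2^i$. One then checks directly that this pair lies in $\mathscr{U}_\Lambda$ (in particular it satisfies the monotonicity constraint (\ref{cond2}) strictly and the volume constraint), that it fulfils both equality conditions above, and that its energy equals the lower bound $r_0\Phi^2(1+N^2/(\Lambda-1))$. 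By the universal lower bound it is therefore a global minimizer, which settles existence and the value of the minimum in the degenerate case.

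For the ``only if'' part I would first prove that, for \emph{every} $\boldsymbol{p}$, the infimum of $\mathcal{E}$ over $\mathscr{U}_\Lambda$ still equals this same value. To this end I would construct a degenerating minimizing sequence concentrating the flow along a single root-to-leaf path: I keep the $N$ on-path reduction ratios essentially equal to $(\Lambda-1)/N$ and let every off-path ratio equal $\varepsilon\to0^{+}$ (rescaling the on-path values to preserve the volume and to respect (\ref{cond2})). Eliminating $p_0$ from $p_0\boldsymbol{u_N}-\boldsymbol{p}=A_N(\boldsymbol{\xi})\boldsymbol{q}$ via $\boldsymbol{u_N}^\top\boldsymbol{q}=\Phi$, I write $\mathcal{E}=\Phi^2/a+(ac-b^2)/a$ with $a=\boldsymbol{u_N}^\top A_N^{-1}\boldsymbol{u_N}$, $b=\boldsymbol{u_N}^\top A_N^{-1}\boldsymbol{p}$, $c=\boldsymbol{p}^\top A_N^{-1}\boldsymbol{p}$, the second term being nonnegative by Cauchy--Schwarz for the inner product induced by $A_N^{-1}$. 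I bound $\Phi^2/a$ from above by testing $a\ge(\boldsymbol{u_N}^\top\boldsymbol{q})^2/(\boldsymbol{q}^\top A_N\boldsymbol{q})$ with the single-tube flow $\boldsymbol{q}=\Phi\boldsymbol{e_1}$, whose energy is $r_0\Phi^2(1+\sum_{\text{path}}1/\xi_i)\to L$; and I bound the nonnegative term, which equals $\min_t(\boldsymbol{p}-t\boldsymbol{u_N})^\top A_N^{-1}(\boldsymbol{p}-t\boldsymbol{u_N})$, by the dissipation of the Dirichlet trial potential that is constant except across the pinched terminal branches, a quantity of order $\varepsilon$. Hence $\inf_{\mathscr{U}_\Lambda}\mathcal{E}=r_0\Phi^2(1+N^2/(\Lambda-1))$. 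Now if $(\mathscr{P}_2)$ possessed a solution it would attain this infimum, hence saturate the lower bound, hence satisfy $\xi_{i,j}=\frac{\Lambda-1}{N\Phi}q_{i,j}$ with all $q_{i,j}>0$. A short pressure-drop computation then gives $r_{i,j}q_{i,j}=r_0q_{i,j}/\xi_{i,j}=r_0N\Phi/(\Lambda-1)$, a constant, so that $p_{N,k}=p_0-r_0\Phi-N\cdot r_0N\Phi/(\Lambda-1)$ is independent of $k$, i.e.\ $\boldsymbol{p}=\pi^\ast\boldsymbol{u_N}$. Contrapositively, if $\boldsymbol{p}$ is not proportional to $\boldsymbol{u_N}$ the problem has no solution.

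The main obstacle I anticipate is the infimum computation in the ``only if'' part, namely showing that the degenerate single-tube sequence really drives $\mathcal{E}$ down to the lower bound for an arbitrary $\boldsymbol{p}$. The delicate point is that pinching the off-path branches does \emph{not} make $A_N(\boldsymbol{\xi})$ uniformly large, so a crude eigenvalue estimate on the pressure-driven term $(ac-b^2)/a$ fails; the argument must use the tree structure and control that term by a Dirichlet (minimum-dissipation) trial potential supported on the terminal branches, so that its contribution scales like $\varepsilon$. Everything else---the lower bound, the equality discussion, the symmetric minimizer, and the final constant-pressure-drop computation---is elementary once the energy is put in the additive form $r_0\Phi^2+r_0\sum q_{i,j}^2/\xi_{i,j}$.
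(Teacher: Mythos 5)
Your proposal is correct in outline and reaches the same conclusions, but it routes around the paper's two key technical devices in genuinely different ways. For the lower bound, the paper solves the auxiliary uncoupled problem $(\mathscr{Q})$ in two stages (Kuhn--Tucker in $\boldsymbol{q}$ at fixed $\boldsymbol{\xi}$, then Kuhn--Tucker in the aggregated variables $y_i=\sum_j\xi_{i,j}$); you obtain the same bound $r_0\Phi^2\bigl(1+\frac{N^2}{\Lambda-1}\bigr)$ and the same saturation condition $q_{i,j}=\frac{N\Phi}{\Lambda-1}\xi_{i,j}$ in one stroke from Cauchy--Schwarz combined with $\sum_j|q_{i,j}|\ge\Phi$ at each level --- more elementary, and it hands you the equality analysis for free. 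For the harder step --- showing the infimum over $\mathscr{U}_\Lambda$ equals this bound for \emph{every} $\boldsymbol{p}$ --- the paper computes $\lim_{\varepsilon\to0}\boldsymbol{q}^\varepsilon$ explicitly through the decomposition $A_N(\varepsilon)=r_0\bigl(U_N+\varepsilon^{-1}\widetilde{A}_N^1+\cdots\bigr)$ and the invertibility of the limit matrix $\widetilde{M}_N$ (Lemmas \ref{decompAN} and \ref{MnInvert}), then passes to the limit in the energy term by term. You instead eliminate $p_0$ to get $\mathcal{E}=\Phi^2/a+(ac-b^2)/a$ and control each piece by a variational principle (Thomson-type test with $\boldsymbol{q}=\Phi\boldsymbol{e_1}$ for the first, Dirichlet for the second). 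This avoids any explicit limit of $\boldsymbol{q}^\varepsilon$, but it leans on the identification of $\boldsymbol{w}^\top A_N(\boldsymbol{\xi})^{-1}\boldsymbol{w}$ with the minimal nodal Dirichlet energy of the resistor network for prescribed boundary potentials, a fact the paper never develops and which you would need to state and justify; when you do, the choice $t=p_{N,1}$ is essential so that the one terminal branch that stays open carries no potential jump, giving the trial energy $\varepsilon r_0^{-1}\sum_{k\ge2}(p_{N,k}-p_{N,1})^2=O(\varepsilon)$ as you indicate. The remaining ingredients (the symmetric minimizer $\xi_{i,j}^*=(\Lambda-1)/(N2^i)$ with $q_{i,j}^*=\Phi/2^i$, and the constant-pressure-drop computation forcing $\boldsymbol{p}=\pi^*\boldsymbol{u_N}$) coincide with the paper's. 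Note only that your route does not deliver the convergence $\boldsymbol{q}^\varepsilon\to\Phi\boldsymbol{e_1}$, which is not needed for Theorem \ref{theoExistenceSol} but is item 4 of Theorem \ref{theorem}.
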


This theorem emphasizes the fact that if two pressures at the outlets are different, then the problem $(\mathscr{P}_2)$ has no solution. In this case, we are able to exhibit a minimizing sequence. It has to be noticed that the value of the infimum does not differ according to the case considered.

\begin{thrm}\label{theorem}
Assume that at least two pressures differ at the outlets of the tree, \textit{i.e.}
\begin{equation}\label{assumpPressure}
\exists j \in \llbracket 1 , 2^{N} \rrbracket \mid p_{N,j} \neq p_{N,j+1}.
\end{equation}
Then,
\begin{enumerate}
\item the problem $(\mathscr{P}_2)$ has no solution,
\item a minimizing sequence $(\boldsymbol{\xi}^\varepsilon,\boldsymbol{q}^\varepsilon)_{\varepsilon > 0}$ is given by 
\begin{equation}\label{minSeq}
\left \{
\begin{array}{ll}
\xi_{i,j}^\varepsilon =  \frac{\Lambda - 1}{N} - \left( \frac {2^{N+1} - 2}{N} - 1 \right) \varepsilon \ & \forall (i,j) \in \Pi_{0 \to (N,1)},\\
\xi_{i,j}^\varepsilon =  \varepsilon &  \forall (i,j) \in  \mathcal{B}_N \backslash \Pi_{0 \to (N,1)},\\
\boldsymbol{q}^\varepsilon = M_N(\boldsymbol{\xi}^\varepsilon)^{-1} \boldsymbol{b_N},
\end{array}
\right.
\end{equation}
with $\varepsilon>0$ sufficiently small to respect the constraint: $ \forall (i,j) \in \mathcal{B}_N, \ \xi_{i,j}^{\varepsilon} > 0$,
\item one has
$$
\inf _{(\boldsymbol{q},\boldsymbol{\xi}) \in \mathscr{U}_\Lambda}\mathcal{E}(\boldsymbol{q}, \boldsymbol{\xi}) =r_0 \Phi^2 \left(1+ \frac{N^2}{\Lambda - 1} \right).
$$
\item the sequence $(\boldsymbol{q}^\varepsilon)_{\varepsilon > 0}$ converges to the vector $(\Phi,0, \dots , 0)^\top$ as $\varepsilon\to 0$.
\end{enumerate}
\end{thrm}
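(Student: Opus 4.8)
The plan is to establish the four assertions around a single lower bound that is valid on all of $\mathscr{U}_\Lambda$, independently of $\boldsymbol{p}$. Writing the energy as $\mathcal{E}(\boldsymbol{q},\boldsymbol{\xi})=r_0\Phi^2+r_0\sum_{(i,j)\in\mathcal{B}_N}q_{i,j}^2/\xi_{i,j}$ and applying the Cauchy--Schwarz inequality together with the volume constraint $\sum_{(i,j)}\xi_{i,j}=\Lambda-1$ yields $\sum_{(i,j)}q_{i,j}^2/\xi_{i,j}\ge\big(\sum_{(i,j)}|q_{i,j}|\big)^2/(\Lambda-1)$. Flow conservation gives $\sum_{j=1}^{2^i}q_{i,j}=\Phi$ at every level $i$, hence $\sum_{j=1}^{2^i}|q_{i,j}|\ge\Phi$ and $\sum_{(i,j)}|q_{i,j}|\ge N\Phi$, so that $\mathcal{E}\ge r_0\Phi^2(1+N^2/(\Lambda-1))$ for every admissible pair. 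Since assumption \eqref{assumpPressure} is exactly the statement that $\boldsymbol{p}$ is not proportional to $\boldsymbol{u_N}$, assertion~1 follows at once from Theorem~\ref{theoExistenceSol}; alternatively one can argue directly by inspecting the equality case, where Cauchy--Schwarz forces $\xi_{i,j}=\kappa|q_{i,j}|$ (so every flow is nonzero), the level inequalities force all flows positive, and then the pressure drop to each leaf equals $r_0\Phi+Nr_0/\kappa$ independently of the leaf, contradicting \eqref{assumpPressure}.

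Next I would check that the candidate sequence is admissible. A direct summation shows $1+\sum_{(i,j)}\xi_{i,j}^\varepsilon=\Lambda$, so \eqref{cond1} holds, and for $\varepsilon$ small the values are positive and the monotonicity \eqref{cond2} is satisfied (along the surviving path $\Pi_{0\to(N,1)}$ consecutive values are equal, while every off-path value equals $\varepsilon$ and lies below its on-path ancestor). Thus $(\boldsymbol{q}^\varepsilon,\boldsymbol{\xi}^\varepsilon)\in\mathscr{U}_\Lambda$. To prove assertion~4, note that by Proposition~\ref{defq} the vector $\boldsymbol{q}^\varepsilon=M_N(\boldsymbol{\xi}^\varepsilon)^{-1}\boldsymbol{b_N}$ is characterized by $A_N(\boldsymbol{\xi}^\varepsilon)\boldsymbol{q}^\varepsilon=p_0^\varepsilon\boldsymbol{u_N}-\boldsymbol{p}$ together with $\boldsymbol{u_N}^\top\boldsymbol{q}^\varepsilon=\Phi$, where $p_0^\varepsilon$ is the induced inlet pressure. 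The decisive feature is that every branch off the path $\Pi_{0\to(N,1)}$ has resistance $r_0/\varepsilon\to+\infty$.

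The main obstacle is the uniform control of the pressures as these off-path resistances blow up. I would first bound $p_0^\varepsilon$ uniformly in $\varepsilon$: a series/parallel reduction shows that the effective conductance from the root to the family of leaves converges to that of the single surviving path, which is finite and positive, so $p_0^\varepsilon$ stays bounded. A discrete maximum principle applied at the internal, current-free bifurcation nodes then bounds every node pressure by the fixed leaf pressures and $p_0^\varepsilon$, uniformly in $\varepsilon$. Consequently each off-path flow equals $\xi_{i,j}^\varepsilon/r_0$ times a bounded pressure drop, hence is $O(\varepsilon)$; flow conservation then gives $q_{N,1}^\varepsilon=\Phi-O(\varepsilon)$, so $\boldsymbol{q}^\varepsilon\to(\Phi,0,\dots,0)^\top$, which is assertion~4 (together with the rate $O(\varepsilon)$ for the vanishing components).

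Finally, assertions~3 and~2 follow by feeding this rate into the energy. The off-path contributions to $\mathcal{E}(\boldsymbol{q}^\varepsilon,\boldsymbol{\xi}^\varepsilon)$ are of the form $(O(\varepsilon))^2/\varepsilon=O(\varepsilon)\to0$, while on the path $q_{i,1}^\varepsilon\to\Phi$ and $\xi_{i,1}^\varepsilon\to(\Lambda-1)/N$, so the $N$ path terms tend to $r_0N^2\Phi^2/(\Lambda-1)$; hence $\mathcal{E}(\boldsymbol{q}^\varepsilon,\boldsymbol{\xi}^\varepsilon)\to r_0\Phi^2(1+N^2/(\Lambda-1))$. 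Combined with the lower bound of the first paragraph, this identifies the infimum over $\mathscr{U}_\Lambda$ as $r_0\Phi^2(1+N^2/(\Lambda-1))$ (assertion~3) and shows that $(\boldsymbol{q}^\varepsilon,\boldsymbol{\xi}^\varepsilon)$ is a minimizing sequence (assertion~2). The only genuinely delicate point in this whole scheme is the $\varepsilon\to0$ analysis of the linear system in the third paragraph, namely the uniform boundedness of the pressures when the off-path resistances diverge.
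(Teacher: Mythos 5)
Your proof is correct in substance but reaches the conclusion by a genuinely different route than the paper on both of its two main steps. For the lower bound, the paper introduces the relaxed problem $(\mathscr{Q})$ of Proposition \ref{pbAuxQ} and solves it with Lagrange multipliers in two stages (first in $\boldsymbol{q}$ at fixed $\boldsymbol{\xi}$, then in the aggregated variables $y_i=\sum_j\xi_{i,j}$); your Cauchy--Schwarz inequality combined with $\sum_j|q_{i,j}|\ge\Phi$ at each level gives the same bound $r_0\Phi^2\bigl(1+\frac{N^2}{\Lambda-1}\bigr)$ in two lines, and its equality case even recovers the condition $q_{i,j}/\xi_{i,j}=\mathrm{const}$ on which the paper's proof of Theorem \ref{theoExistenceSol} rests, so nothing is lost by the shortcut. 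For the convergence of $\boldsymbol{q}^\varepsilon$, the paper rescales the linear system of Proposition \ref{defq} by $\varepsilon$, identifies the limit matrix $\widetilde{M}_N$ through the direct-sum decomposition of $\widetilde{A}_N^1$ (Lemmas \ref{decompAN} and \ref{MnInvert}), concludes by continuity of matrix inversion, and then expands the quadratic form $(\boldsymbol{q}^\varepsilon)^\top A_N(\varepsilon)\boldsymbol{q}^\varepsilon$ term by term; you instead work on the resistor network itself and extract the $O(\varepsilon)$ decay of the off-path flows from a uniform pressure bound, which makes the energy computation immediate. Your route is more physical and bypasses the two linear-algebra lemmas; the paper's is more mechanical but fully written out. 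The one step you must flesh out is the uniform bound on $p_0^\varepsilon$: the ``effective conductance from the root to the family of leaves'' is only defined when the leaves are shorted together, whereas here the leaf pressures differ. The clean repair is superposition: split the data into $(\Phi,\boldsymbol{0})$, for which $p_0^\varepsilon=\Phi/C^\varepsilon$ with $C^\varepsilon=\boldsymbol{u_N}^\top A_N(\varepsilon)^{-1}\boldsymbol{u_N}$ the all-leaves-grounded conductance, bounded below for small $\varepsilon$ by that of the surviving path alone (Rayleigh monotonicity), and $(0,\boldsymbol{p})$, for which the root becomes a current-free interior node and your own discrete maximum principle traps $p_0^\varepsilon$ between $\min_j p_{N,j}$ and $\max_j p_{N,j}$. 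With that point made precise, every assertion you state goes through, including the admissibility check of the sequence against \eqref{cond2} and \eqref{cond1}, which the paper itself leaves implicit.
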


The limit case 
``$\boldsymbol{q^0}=(\Phi,0, \dots , 0)^\top$'' corresponds to the case where the fluid exits the tree only by the outlet denoted by $(N,1)$. The use of some symmetry properties in the structure of the objective function $\mathcal{E}$ yields that we would easily get another minimizer by choosing any other outlet as main exit of the fluid, which would correspond to the closure of every path linking the root branch to the outlet except $\Pi_{0\to (N,i)}$, for any $i\in \llbracket 1,2^{N}\rrbracket$.
Note that the symmetry property does not hold for the pressures at outlets. However, when we take the limit in the minimizing sequence, since the total flow in the tree is imposed, the pressure drop between the root and the outlet that remains open is the same whatever the outlet chosen. Thus, the pressure at root will depend on the pressure imposed at the outlet. Hence, introducing the matrix $M_N(\boldsymbol{\xi})$ that does not depend on the root pressure is well adapted.

\par The optimal shape for the case with each pressures equal at outlets is obviously unstable relatively to the pressures: a slight change in one of them will shift the optimal shape from a tree-like structure to a pipe-like structure. From the modeling point of view, it means that a tree-like structure verifying the hypothesis of this section should not be encountered in nature as a result of an evolution process such as natural selection. Indeed a perfect adjustment of pressure at outlets is very difficult to assure and maintain. 

The proofs of the theorems will be decomposed into two steps, whose details can be found in the next section:
\begin{enumerate}
\item determination of a lower bound for the total viscous dissipated energy,
\item construction of the minimizing sequence $(\boldsymbol{q}^\varepsilon,\boldsymbol{\xi}^\varepsilon)_{\varepsilon > 0}$.
\end{enumerate}

\section{Proofs of theorems \ref{theoExistenceSol} and \ref{theorem} (2nd case)}

\subsection{Determination of a lower bound for the total viscous dissipated energy}\label{SubSectionLowerBound}

We will now focus on an auxiliary optimization problem whose resolution is useful in the proof of Theorem \ref{theorem}. Let $\boldsymbol{q} \in \mathbb{R}^{(2^{N+1}-2) \times 1}$ and $\boldsymbol{\xi} \in (\mathbb{R}_+^*)^{(2^{N+1}-2) \times 1}$ be the two vectors
$$
\boldsymbol{q} = (q_{1,1},q_{1,2}, \dots, q_{N,1}, \dots, q_{N,2^N})^\top \textrm{ and }
\boldsymbol{\xi} = (\xi_{1,1},\xi_{1,2}, \dots, \xi_{N,1}, \dots, \xi_{N,2^N})^\top.
$$
Notice that, in the rest of the paper and besides Section \ref{SubSectionLowerBound}, the notation $\boldsymbol{q}$ points out the vector composed only of the flow rates at the outlets of the tree.
\par Let $r_0>0$, $\Phi>0$ and $\Lambda>1$. Let $\mathscr{(Q)}$ be the finite-dimensional constrained optimization problem

\begin{equation}
\mathscr{(Q)} \left \{
\begin{array}{ll}
\min \mathcal{E}(\boldsymbol{q}, \boldsymbol{\xi})\\
(\boldsymbol{q},\boldsymbol{\xi})\in \mathscr{C}_1\times \mathscr{C}_2,
\end{array}
\right.
\end{equation}

where $\mathcal{E}(\boldsymbol{q}, \boldsymbol{\xi})$ is defined by
\begin{equation}
\mathcal{E}(\boldsymbol{q}, \boldsymbol{\xi}) = r_0\Phi^2+\sum_{(i,j) \in \mathcal{B}_N}  r_0 \frac{q_{i,j}^2}{\xi_{i,j}},
\end{equation}
and the sets of constraints are
\begin{eqnarray*}
 \mathscr{C}_1 & = & \left\{\boldsymbol{q}  \in (\mathbb{R})^{(2^{N+1}-2) \times 1} \mid \forall i \in \llbracket 1, N \rrbracket, \
\displaystyle \sum_{j=1}^{2^i} q_{i,j} = \Phi\right\}, \\
 \mathscr{C}_2 & = & \left\{\boldsymbol{\xi} \in (\mathbb{R}_+^*)^{(2^{N+1}-2) \times 1} \mid \displaystyle\sum_{(i,j) \in \mathcal{B}_N} \xi_{i,j}= \Lambda -1\right\}.
\end{eqnarray*}

\begin{prpstn}\label{pbAuxQ}
The problem $\mathscr{(Q)}$ has a solution $(\boldsymbol{q}^*, \boldsymbol{\xi}^*)$ verifying necessarily
\begin{equation}\label{optCondQ}
\forall (i,j) \in \mathcal{B}_N, \
\frac{q_{i,j}^*}{\xi_{i,j}^*} = \frac{N \Phi}{\Lambda - 1} .
\end{equation}
Moreover, the value of the minimum is equal to $ r_0 \Phi^2 \left( 1+ \frac{N^2}{\Lambda -1}\right)$.
\end{prpstn}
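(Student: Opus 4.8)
The plan is to exploit the fully separable structure of $\mathscr{(Q)}$ and to reduce it, by two successive applications of the Cauchy--Schwarz inequality, to a sharp lower bound that is then shown to be attained. The key observation is that both the objective $\sum_{(i,j)\in\mathcal{B}_N} q_{i,j}^2/\xi_{i,j}$ and the constraint set $\mathscr{C}_1$ decouple level by level: $\mathscr{C}_1$ imposes the single linear condition $\sum_{j=1}^{2^i} q_{i,j}=\Phi$ on each level $i$ independently, while the summand couples $q_{i,j}$ only with $\xi_{i,j}$ at the same index. I would therefore minimize first in $\boldsymbol{q}$ with $\boldsymbol{\xi}$ frozen, and then in $\boldsymbol{\xi}$.

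First I would fix $\boldsymbol{\xi}\in\mathscr{C}_2$ and set $S_i=\sum_{j=1}^{2^i}\xi_{i,j}>0$ for $i\in\llbracket 1,N\rrbracket$. For each level, Cauchy--Schwarz gives
$$\Phi^2=\Big(\sum_{j=1}^{2^i} q_{i,j}\Big)^2=\Big(\sum_{j=1}^{2^i}\frac{q_{i,j}}{\sqrt{\xi_{i,j}}}\sqrt{\xi_{i,j}}\Big)^2\le\Big(\sum_{j=1}^{2^i}\frac{q_{i,j}^2}{\xi_{i,j}}\Big)\,S_i,$$
so that $\sum_{j=1}^{2^i} q_{i,j}^2/\xi_{i,j}\ge \Phi^2/S_i$, with equality if and only if $q_{i,j}/\xi_{i,j}$ is constant in $j$ (equal to $\Phi/S_i$). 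Summing over $i$ and using $\sum_{i=1}^N S_i=\sum_{(i,j)\in\mathcal{B}_N}\xi_{i,j}=\Lambda-1$, a second Cauchy--Schwarz (equivalently, convexity of $t\mapsto 1/t$) yields
$$\sum_{i=1}^N\frac{1}{S_i}\ge\frac{N^2}{\sum_{i=1}^N S_i}=\frac{N^2}{\Lambda-1},$$
with equality if and only if all the $S_i$ coincide, i.e. $S_i=(\Lambda-1)/N$. Combining the two steps gives, for every admissible pair,
$$\mathcal{E}(\boldsymbol{q},\boldsymbol{\xi})=r_0\Phi^2+r_0\sum_{(i,j)\in\mathcal{B}_N}\frac{q_{i,j}^2}{\xi_{i,j}}\ge r_0\Phi^2\Big(1+\frac{N^2}{\Lambda-1}\Big),$$
which is the announced lower bound.

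To establish attainment I would exhibit the explicit point $\xi_{i,j}^*=(\Lambda-1)/(N\,2^i)$ and $q_{i,j}^*=\Phi/2^i$; one checks directly that $\boldsymbol{\xi}^*\in\mathscr{C}_2$, that $\boldsymbol{q}^*\in\mathscr{C}_1$, and that $\mathcal{E}(\boldsymbol{q}^*,\boldsymbol{\xi}^*)$ equals the lower bound, so the infimum is a minimum with the value $r_0\Phi^2(1+N^2/(\Lambda-1))$. Finally, to obtain relation (\ref{optCondQ}) at an arbitrary solution $(\boldsymbol{q}^*,\boldsymbol{\xi}^*)$, I would read off the two equality cases: optimality in $\boldsymbol{q}$ forces the first Cauchy--Schwarz to be an equality on each level, so $q_{i,j}^*/\xi_{i,j}^*=\Phi/S_i^*$; optimality in $\boldsymbol{\xi}$ then forces the second to be an equality, so $S_i^*=(\Lambda-1)/N$; together these give $q_{i,j}^*/\xi_{i,j}^*=N\Phi/(\Lambda-1)$ for all $(i,j)$, as claimed.

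The step I expect to be the main obstacle is the existence/attainment issue: the set $\mathscr{C}_2$ is the \emph{open} simplex $\{\boldsymbol{\xi}>0,\ \sum\xi_{i,j}=\Lambda-1\}$, hence not compact, and $\boldsymbol{q}$ ranges over an unbounded affine set, so the direct method does not apply verbatim. The Cauchy--Schwarz reduction circumvents this precisely because its equality case lies in the interior (all $\xi_{i,j}^*>0$), which makes the candidate minimizer admissible and simultaneously certifies optimality. If one preferred a compactness argument, one would note that on the sublevel set $\{\mathcal{E}\le \mathcal{E}(\boldsymbol{q}^*,\boldsymbol{\xi}^*)\}$ the blow-up of $q_{i,j}^2/\xi_{i,j}$ as $\xi_{i,j}\to 0$ (with $q_{i,j}$ not simultaneously vanishing, which cannot happen at all indices of a level by the constraint $\sum_j q_{i,j}=\Phi$) confines the problem to a compact effective domain, restoring existence before applying Lagrange multipliers to recover (\ref{optCondQ}).
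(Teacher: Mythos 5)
Your proof is correct, and while it follows the same two-stage decomposition as the paper (minimize over $\boldsymbol{q}$ level by level with $\boldsymbol{\xi}$ frozen, then reduce to the level sums $S_i=\sum_j\xi_{i,j}$ and minimize $\sum_i 1/S_i$ under $\sum_i S_i=\Lambda-1$), it replaces the paper's tool at both stages. The paper solves the inner problem by Kuhn--Tucker on the closed convex set $\mathscr{C}_1$, obtaining $q_{i,j}(\boldsymbol{\xi})=\xi_{i,j}\Phi/S_i$, substitutes back, and applies Kuhn--Tucker again to the reduced problem in the variables $y_i=S_i$; you instead chain two Cauchy--Schwarz inequalities and read off their equality cases. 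Your route buys something real: the paper's argument only invokes positivity to get an infimum and then writes first-order conditions as if a minimizer were already known to exist, whereas $\mathscr{C}_2$ is an open simplex and $\mathscr{C}_1$ is an unbounded affine set, so attainment is not automatic; your inequality-plus-explicit-witness scheme certifies the global lower bound and its attainment in one stroke, and the equality analysis delivers (\ref{optCondQ}) at \emph{every} minimizer without any qualification discussion. What the paper's multiplier computation buys in exchange is the explicit formula $q_{i,j}(\boldsymbol{\xi})=\xi_{i,j}\Phi/S_i$ for the inner minimizer at \emph{arbitrary} $\boldsymbol{\xi}$, which is reused later in the paper (e.g.\ to see that $(\mathscr{Q})$ has infinitely many minimizers and to build the minimizing sequence of the remark); your argument yields that formula only at equality, though it is easy to extract it from the level-wise Cauchy--Schwarz step if needed. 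Both proofs are sound; yours is the more self-contained on the existence point.
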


Note that there is no reason to have uniqueness of the solution of Problem $(\mathscr{Q})$.

\begin{proof}
The positivity of $\mathcal{E}(\cdot,\cdot)$ 
grants
the existence of the lower bound $\displaystyle \inf \{\mathcal{E}(\boldsymbol{q}, \boldsymbol{\xi}),(\boldsymbol{q}, \boldsymbol{\xi}) \in \mathscr{C}_1\times \mathscr{C}_2 \}$. 
\par Since the constraints are uncoupled, one has
\begin{equation}
\inf_{(\boldsymbol{q},\boldsymbol{\xi}) \in \mathscr{C}_1\times \mathscr{C}_2 } \mathcal{E}(\boldsymbol{q}, \boldsymbol{\xi}) =
\inf_{\boldsymbol{\xi} \in \mathscr{C}_2} \inf_{\substack{\boldsymbol{q} \in \mathscr{C}_1}} \mathcal{E}(\boldsymbol{q}, \boldsymbol{\xi}).
\end{equation}

$\boldsymbol{\xi}\in\mathscr{C}_2$ being fixed, let us first focus on the optimization problem
\begin{equation}
(\mathscr{Q}_{\boldsymbol{\xi}}) \left \{
\begin{array}{ll}
\min \mathcal{E}(\boldsymbol{q}, \boldsymbol{\xi})\\
\boldsymbol{q}  \in \mathscr{C}_1
\end{array}
\right.
\end{equation}
Since the energy $\mathcal{E}(\cdot ,\boldsymbol{\xi})$ is obviously continuous, coercive and strictly convex, on the convex closed set $\mathscr{C}_1$, the problem $(\mathscr{Q}_{\boldsymbol{\xi}})$ has a unique solution $\boldsymbol{q}(\boldsymbol{\xi})$.

By virtue of Kuhn-Tucker's theorem, there exists a real Lagrange multiplier $\mu_1$ such that
$$
\frac{\partial \mathcal{E}}{\partial q_{1,1}} = \frac{2 r_0 q_{1,1}(\boldsymbol{\xi})}{\xi_{1,1}} = \mu_1 \textrm{ and }
\frac{\partial \mathcal{E}}{\partial q_{1,2}} = \frac{2 r_0 q_{1,2}(\boldsymbol{\xi})}{\xi_{1,2}} = \mu_1.
$$
Since $q_{1,1}(\boldsymbol{\xi}) + q_{1,2}(\boldsymbol{\xi}) = \Phi$, one has
$$
q_{1,1}(\boldsymbol{\xi}) = \frac{\xi_{1,1}}{\xi_{1,1} + \xi_{1,2}} \Phi \textrm{ and }
q_{1,2}(\boldsymbol{\xi}) = \frac{\xi_{1,2}}{\xi_{1,1} + \xi_{1,2}} \Phi.
$$

Similarly, 
\begin{equation}\label{secoDeb}
\forall i \in \llbracket 1,N\rrbracket, \
\forall j \in \llbracket 1,2^i\rrbracket, \
q_{i,j}(\boldsymbol{\xi}) = \frac{ \xi_{i,j}}{\sum_{k=1}^{2^i} \xi_{i,k}} \Phi.
\end{equation}
Let us now focus on the minimization of the function $\boldsymbol{\xi}\in\mathscr{C}_2\mapsto \mathcal{E}(\boldsymbol{q}(\boldsymbol{\xi}),\boldsymbol{\xi})$. From (\ref{secoDeb}), one gets
\begin{eqnarray*}
\mathcal{E}(\boldsymbol{q}(\boldsymbol{\xi}),\boldsymbol{\xi})& = & r_0\Phi^2 \left(1+\sum_{(i,j)\in \mathcal{B}_N}\frac{\xi_{i,j}}{\left(\sum_{k=1}^{2^i} \xi_{i,k}\right)^2}\right)\\
& = & r_0 \Phi^2 
\left(1 +\frac{1}{ \xi_{1,1} + \xi_{1,2}} + \dots +\frac{1}{\sum_{i=1}^{2^N} \xi_{N,i}} 
\right).
\end{eqnarray*}
\par Let us introduce the change of variables
\begin{equation}
y_i = \displaystyle \sum_{j=1}^{2^i} \xi_{i,j}, \ i \in \llbracket 1,2^N \rrbracket.
\end{equation}
Then, our optimization problem becomes
\begin{equation}
\left \{
\begin{array}{ll}
\min \left[r_0 \Phi^2 \left(1 + \displaystyle\sum_{i=1}^{N} \frac{1}{y_i} \right) \right]\\
\displaystyle\sum_{i=1}^{N} y_i = \Lambda - 1.
\end{array}
\right.
\end{equation}

A direct application of Kuhn-Tucker's theorem yields that the unique minimizer of this problem is $\boldsymbol{y^*} = (\frac{\Lambda - 1}{N}, \dots ,\frac{\Lambda - 1}{N})$ and that the minimum is equal to $r_0 \Phi^2 \left(1 + \frac{N^2}{\Lambda -1} \right)$.

From Equation (\ref{secoDeb}) we know that $q^*_{i,j} = \frac{\xi^*_{i,j}}{y_i^*} \Phi$, and we deduce equation (\ref{optCondQ}): $\frac{q^*_{i,j}}{\xi_{i,j}^*} = \frac{N \Phi}{\Lambda-1}$.

\par Consequently, the minimum of $\mathcal{E}$ in $\mathscr{C}_1\times \mathscr{C}_2$ is $ r_0 \Phi^2 \left( 1+ \frac{N^2}{\Lambda -1}\right)$ and the conclusion of this proposition follows.
 \end{proof}
\begin{rmrk}
The expression of the necessarily first-order optimality conditions in the proof of Proposition \ref{pbAuxQ} leads easily to conclude that Problem $(\mathscr{Q})$ has an infinite number of minimizers.
\par Moreover, an interesting minimizing sequence $(\boldsymbol{q}^\varepsilon,\boldsymbol{\xi}^\varepsilon)_{\varepsilon >0 }$ in $\mathscr{C}_1\times \mathscr{C}_2$, of $\mathcal{E}$ for our problem is given by
\begin{equation}
\forall \varepsilon >0, \ \left \{
\begin{array}{ll}
\forall (i,j) \in \Pi_{0 \to (N,1)}, & q_{i,j}^\varepsilon = \Phi \textnormal{ and }\xi_{i,j}^\varepsilon =  \frac{\Lambda - 1}{N} - \left( \frac {2^{N+1} - 2}{N} - 1 \right) \varepsilon\\
\forall (i,j) \in  \mathcal{B}_N \backslash \Pi_{0 \to (N,1)}, & q_{i,j}^\varepsilon = 0 
\textnormal{ and }
\xi_{i,j}^\varepsilon =  \varepsilon. \\
\end{array}
\right. 
\end{equation}
Physically, it corresponds to the case in which the fluid exits the tree only by the outlet denoted by $(N,1)$.
\end{rmrk}

\subsection{Proof of Theorem \ref{theoExistenceSol}}

As a preliminary to the proof, let us deal with the qualification issue for Problem $(\mathscr{P}_2)$. In fact, any elements of the set $\mathscr{U}_\Lambda$ verifies the constraint qualifications. Indeed, thanks to Proposition \ref{defq}, one can write $\boldsymbol{q}=M_N(\boldsymbol{\xi})^{-1}\boldsymbol{b_N}$ and furthermore, it is obvious that any element of the set $\mathscr{A}_\Lambda$ verifies the constraint qualifications. Combining the two previous remarks, the set $\mathscr{U}_\Lambda$ inherits from this property .

\par Let us assume that Problem $(\mathscr{P}_2)$ has a solution $(\boldsymbol{q^*},\boldsymbol{\xi^*})$. In the proof of Theorem \ref{theorem} presented in Section \ref{proofTheorem}, it is proved without any additional assumption than those of Theorem \ref{theoExistenceSol}, that the sequence $(\boldsymbol{q}^\varepsilon,\boldsymbol{\xi}^\varepsilon)_{\varepsilon >0}$ defined by (\ref{minSeq}) is a minimizing sequence for Problem $(\mathscr{P}_2)$, whatever values of pressures at the outlets. Hence it follows that
$$
\mathcal{E}(\boldsymbol{q^*},\boldsymbol{\xi^*})=\lim_{\varepsilon \to 0}\mathcal{E}(\boldsymbol{q}^\varepsilon,\boldsymbol{\xi}^\varepsilon)=r_0 \Phi^2 \left(1+ \frac{N^2}{\Lambda - 1} \right).
$$
Now, recall that, by Proposition \ref{pbAuxQ}, any solution of Problem $\mathscr{(Q)}$ realizes the minimum $r_0 \Phi^2 \left( 1 + \frac{N^2}{\Lambda - 1}\right)$ and verifies the necessary optimality conditions (\ref{optCondQ}). Since $\mathcal{E}(\boldsymbol{q^*},\boldsymbol{\xi^*})=r_0 \Phi^2 \left(1+ \frac{N^2}{\Lambda - 1} \right)$ and $(\boldsymbol{q^*},\boldsymbol{\xi^*})\in \mathscr{U}_\Lambda \subset \mathscr{C}_1\times \mathscr{C}_2$, the pair $(\boldsymbol{q^*},\boldsymbol{\xi^*})$ belongs to the set of minimizers of Problem $\mathscr{(Q)}$ and verifies therefore, the necessary optimality conditions (\ref{optCondQ}), namely
\begin{equation}
\forall (i,j) \in \mathcal{B}_N, \
q_{i,j}^* = \frac{N \Phi}{\Lambda - 1} \xi_{i,j}^*.
\end{equation}

Let us denote as previously by $p_0$ the pressure at the inlet of the tree. According to Poiseuille's law and reasoning by induction, we obtain
$$
\left\{\begin{array}{l}
\left.\begin{array}{r}
p_0-p_{1,1}=\frac{N\Phi}{\Lambda-1}\\
p_{0}-p_{1,2}=\frac{N\Phi}{\Lambda-1}
\end{array}\right\}\Longrightarrow p_{1,1}=p_{1,2}\\
~\hspace{1cm}\vdots \\
\left.\begin{array}{r}
p_{i,j}-p_{i+1,2j-1}=\frac{N\Phi}{\Lambda-1}\\
p_{i,j}-p_{i+1,2j}=\frac{N\Phi}{\Lambda-1}
\end{array}\right\}\Longrightarrow p_{i+1,2j-1}=p_{i+1,2j}, \forall (i,j)\in \displaystyle \bigcup_{i\in\llbracket 1,N-1\rrbracket}\mathcal{B}_{N,i}.
\end{array}
\right.
$$
In particular, it implies that
$$
p_{N,1}=p_{N,2}=\dots=p_{N,2^N-1}=p_{N,2^N}.
$$
Conversely, let us prove that, if there exists $\pi^*\in\mathbb{R}$ such that $\boldsymbol{p}=\pi^*\boldsymbol{u_N}$, then, Problem $(\mathscr{P}_2)$ has a solution. In this case, we are able to exhibit an element $(\boldsymbol{q^*},\boldsymbol{\xi^*})$ which belongs to the admissible set $\mathscr{U}_\Lambda$. For instance, let us choose
$$
\forall (i,j)\in \mathcal{B}_N, \ \xi_{i,j}^*=\frac{\Lambda-1}{N2^i}.
$$
Then, it is easy to make the calculation of the flow throughout the tree. Indeed
$$
\left.\begin{array}{r}
p_{N-1,1}-p_{N,1}=\frac{q_{N,1}^*}{\xi_{N,1}^*}\\
p_{N-1,1}-p_{N,2}=\frac{q_{N,2}^*}{\xi_{N,2}^*}
\end{array}\right\}\Longrightarrow q_{N,1}^*=q_{N,2}^*\textnormal{ since }p_{N,1}=p_{N,2}=\pi^*\textnormal{ and }\xi_{N,1}^*=\xi_{N,2}^*.
$$
Iterating this reasoning proves that $\boldsymbol{q^*}$ is proportional to $\boldsymbol{u_N}$ and finally, by induction, we obtain
$$
\forall (i,j)\in\mathcal{B}_N, \ q_{i,j}^*=\frac{\Phi}{2^i}.
$$
A direct calculation shows hence that
$$
\mathcal{E}(\boldsymbol{q^*},\boldsymbol{\xi^*})=r_0 \Phi^2 \left(1+ \frac{N^2}{\Lambda - 1} \right).
$$

\subsection{Proof of Theorem \ref{theorem}}\label{proofTheorem}
The fact that Problem $(\mathscr{P}_2)$ has no solution when assumption (\ref{assumpPressure}) holds is a direct consequence of Theorem \ref{theoExistenceSol}. In this section, we prove that the sequence exhibited in (\ref{minSeq}) is a minimizing sequence for Problem $(\mathscr{P}_2)$, in other words,
\begin{equation}\label{ConvMinSeq}
\lim_{\varepsilon \to 0} \mathcal{E}(\boldsymbol{q}(\boldsymbol{\xi}^\varepsilon),\boldsymbol{\xi}^\varepsilon) = r_0 \Phi^2 \left(1+ \frac{N^2}{\Lambda - 1} \right), 
\end{equation}
where $\boldsymbol{q}(\boldsymbol{\xi}^\varepsilon)$ is the unique solution of the linear system $ M_N(\boldsymbol{\xi}^\varepsilon)\boldsymbol{q} = \boldsymbol{b_N}$ (see Proposition \ref{defq}). Indeed, since $\mathscr{U}_\Lambda \subset \mathscr{C}_1\times \mathscr{C}_2$ and since the sequence $(\boldsymbol{q}(\boldsymbol{\xi}^\varepsilon),\boldsymbol{\xi}^\varepsilon)_{\varepsilon >0}$ has been constructed so that its elements belong to $\mathscr{U}_\Lambda$, one has for $\varepsilon$ sufficiently small,
$$
\mathcal{E}(\boldsymbol{q}(\boldsymbol{\xi}^\varepsilon),\boldsymbol{\xi}^\varepsilon) \geq r_0 \Phi^2 \left(1+ \frac{N^2}{\Lambda - 1} \right).
$$
Hence, proving (\ref{ConvMinSeq}) will yield at the same time that $(\boldsymbol{q}(\boldsymbol{\xi}^\varepsilon),\boldsymbol{\xi}^\varepsilon)_{\varepsilon >0}$ is a minimizing sequence for Problem $(\mathscr{P}_2)$ and that the infimum is equal to $r_0 \Phi^2 \left(1+ \frac{N^2}{\Lambda - 1} \right)$.\\

Let us denote for the sake of clarity, $\boldsymbol{q}(\boldsymbol{\xi}^\varepsilon)$ by $\boldsymbol{q}^\varepsilon$, $A_N(\boldsymbol{\xi}^\varepsilon)$ by $A_N(\varepsilon)$ and $M_N(\boldsymbol{\xi}^\varepsilon)$ by $M_N(\varepsilon)$. Now, recall that, according to Proposition \ref{defq}, the vector $\boldsymbol{q}^\varepsilon$ is completely characterized by the system

\begin{equation}
\left\{
\begin{array}{l}
\langle \boldsymbol{q}^\varepsilon, A_N(\varepsilon) \boldsymbol{v_i}\rangle = - \langle \boldsymbol{p},\boldsymbol{v_i}\rangle ,~\forall i \in \llbracket 1,2^N-1 \rrbracket \\
\langle \boldsymbol{q}^\varepsilon,\boldsymbol{u_N} \rangle = \Phi .
\end{array}
\right.
\end{equation}
Let  $\varepsilon>0$. The matrix $A_N(\varepsilon)$ admits the decomposition
\begin{align}
 A_{N}(\varepsilon) = r_0 \left(U_N + \frac{1}{\varepsilon} \widetilde{A}_N^1 + \frac{1}{\frac{\Lambda-1}{N} - \left(\frac{2^{N+1}-2}{N}-1\right) \varepsilon} \widetilde{A}_N^2 \right)
\end{align}
where $U_N$, $\widetilde{A}_N^1$ and $\widetilde{A}_N^2$ are three matrices independent of $\varepsilon$ with same size as $A_N(\varepsilon)$, such that
\begin{itemize}
\item The first part, $U_N$ is the matrix of size $2^N \times 2^N$ whose coefficients are all equal to $1$. This term comes from the root branch that add a resistance $r_0$ to the resistance of any pathway in the tree.
\item The second part, $\frac{r_0}{\varepsilon} \widetilde{A}_N^1$, corresponds to pathways consisting only in branches $(i,j)$ such that $\xi_{(i,j)}=\varepsilon$ (i.e. whose diameter will go to $0$ with $\varepsilon$), namely all pathways in the tree except those included in the pathway going from the root to the outlet $(N,1)$. $\widetilde{A}_N^1$ is more precisely characterized in Lemma \ref{decompAN} below.
\item The third part $\frac{1}{\frac{\Lambda-1}{N} - (2^N-N) \varepsilon} \widetilde{A}_N^2$ corresponds to the pathways that stay open when $\varepsilon$ goes to $0$.
\end{itemize}

Let us define $\widetilde{A}_N(\varepsilon) =  \varepsilon A_N(\varepsilon)$. Then, we have
\begin{equation}
\lim_{\varepsilon \to 0} \widetilde{A}_N(\varepsilon) = r_0 \widetilde{A}_{N}^1.
\end{equation}
Therefore, the vector $\boldsymbol{q}^\varepsilon$ is completely characterized by the system
\begin{equation}
\label{syst2}
\left\{
\begin{array}{l}
\langle \boldsymbol{q}^\varepsilon, \widetilde{A}_N(\varepsilon) \boldsymbol{v_i}\rangle = - \varepsilon \langle \boldsymbol{p},\boldsymbol{v_i}\rangle ,~\forall i \in \llbracket 1,3 \rrbracket \\
\langle \boldsymbol{q}^\varepsilon,\boldsymbol{u_N} \rangle = \Phi
\end{array}
\right.
\end{equation}
Thus it is necessary to study more closely the matrix $\widetilde{A}_{N}^1$ that will give the behaviour of $\boldsymbol{q}^\varepsilon$ when $\varepsilon$ goes to $0$.
\begin{lmm}
\label{decompAN}
$\widetilde{A}_N^1 = (0) \oplus \mathop{\bigoplus}_{p=0}^{N-2} B_p$ where $B_p$ is a resistance matrix of a tree of $p+1$ generations whose branches have all the same resistance equal to $1$.
\end{lmm}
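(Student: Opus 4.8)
The plan is to compute the entries of $\widetilde{A}_N^1$ directly from Proposition~\ref{poiseuilleRelationProp} and then recognise the resulting array as the announced direct sum. By that proposition and the Remark that follows it, the entry $a_{i,j}^N$ equals $r_0\bigl(1+\sum_{(k,l)\in S_{i,j}}1/\xi_{k,l}\bigr)$, where $S_{i,j}:=\Pi_{0\to(N,i)}\cap\Pi_{0\to(N,j)}$ is the set of branches common to the paths leading to the outlets $(N,i)$ and $(N,j)$ (for $i=j$ this is the whole path to $(N,i)$). Inserting the sequence $\boldsymbol{\xi}^\varepsilon$ of~(\ref{minSeq}), a branch $(k,l)$ satisfies $\xi_{k,l}^\varepsilon=\varepsilon$ when $(k,l)\notin\Pi_{0\to(N,1)}$ and $\xi_{k,l}^\varepsilon=\frac{\Lambda-1}{N}+O(\varepsilon)$ when $(k,l)\in\Pi_{0\to(N,1)}$, so only the off-main-path branches generate a $1/\varepsilon$ singularity. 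Collecting the coefficient of $r_0/\varepsilon$ gives
\[
\bigl(\widetilde{A}_N^1\bigr)_{i,j}=\#\{\,(k,l)\in S_{i,j}\mid (k,l)\notin\Pi_{0\to(N,1)}\,\},
\]
the number of branches shared by the two paths that lie off the main path $\Pi_{0\to(N,1)}$; this integer matrix is the object to identify.

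Next I would sort the outlets by the place where their path leaves the main path. The path to the distinguished outlet $(N,1)$ is $\Pi_{0\to(N,1)}=\{(1,1),\dots,(N,1)\}$; any other outlet follows the main path through $(1,1),\dots,(a-1,1)$ and then turns off through the sibling branch $(a,2)$ for a unique level $a\in\llbracket 1,N\rrbracket$, so it belongs to the dyadic subtree $T_a$ rooted at $(a,2)$, which has $N-a+1$ generations and $2^{N-a}$ outlets. Because $S_{i,j}$ is an initial segment of both paths, one of its branches is off the main path exactly when the common path has already left the main path. Two conclusions follow immediately. The path to outlet $(N,1)$ is contained in $\Pi_{0\to(N,1)}$, hence $S_{1,j}\subseteq\Pi_{0\to(N,1)}$ for every $j$ and the first row and column of $\widetilde{A}_N^1$ vanish. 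If $i$ and $j$ lie in distinct subtrees $T_a,T_b$ (or if one of them is the distinguished outlet), their paths already diverge through different siblings at the level where the first of them leaves the main path, so $S_{i,j}\subseteq\Pi_{0\to(N,1)}$ and $(\widetilde{A}_N^1)_{i,j}=0$. Thus $\widetilde{A}_N^1$ is block diagonal, with a $1\times1$ zero block for outlet $(N,1)$ and one block per subtree $T_a$.

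It remains to identify the block carried by a subtree $T_a$. For two outlets of $T_a$ the off-main-path branches in $S_{i,j}$ are precisely $(a,2)$ together with the branches common to the two paths inside $T_a$; counting them is exactly the evaluation of the resistance-matrix formula of Proposition~\ref{poiseuilleRelationProp} for the dyadic tree $T_a$, with the root branch $(a,2)$ playing the role of the root resistance $r_0=1$ and every interior reduction ratio set to $1$, so that each branch contributes resistance $1$. That matrix is by definition $B_{N-a}$, the resistance matrix of a tree of $N-a+1$ generations with unit resistances. Reindexing by $p=N-a$ (so $p$ runs over $0,\dots,N-1$ as $a$ runs over $N,\dots,1$), and checking that in the natural numbering the index blocks of the successive subtrees are contiguous and ordered by increasing $p$ behind the distinguished outlet, we obtain the claimed decomposition $\widetilde{A}_N^1=(0)\oplus\bigoplus_{p}B_p$. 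The same statement can be organised as an induction on $N$: the right half of the tree supplies the full unit-resistance block $B_{N-1}$, while the left half is itself a dyadic tree of $N$ generations carrying the main path, whose off-path matrix is $\widetilde{A}_{N-1}^1$ by the inductive hypothesis. The only genuinely delicate points are the combinatorial bookkeeping identifying the subpath/$\nu_{i-1,j-1}$ notation with the geometric fact that two paths split exactly when they choose different siblings, and the verification that the outlet numbering makes the blocks contiguous, so that the result is a true direct sum and not merely block diagonal up to a permutation.
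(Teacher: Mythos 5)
Your proof is correct and follows essentially the same route as the paper's (one-sentence) proof: $\widetilde{A}_N^1$ is read off as the resistance matrix of the tree in which the branches of $\Pi_{0\to(N,1)}$ carry resistance $0$ and all other branches carry resistance $1$, whence the block-diagonal structure; you simply carry out in detail the entry computation, the vanishing of the cross-blocks, and the identification of each hanging subtree's block, all of which the paper leaves implicit. Note also that your index range $p=0,\dots,N-1$ is the one that makes the block sizes add up to $2^N$ and matches the paper's own figure for $N=2$, so the upper limit $N-2$ in the statement of the lemma is an off-by-one slip in the paper rather than an error on your part.
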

\begin{proof}
From its definition, $\widetilde{A}_N^1$ corresponds to the resistance matrix of a tree with $N+1$ generations where the resistances of the branches on the path from root to outlet $(N,1)$ are all $0$ and the resistance of the other branches are all $1$.
\end{proof}
\begin{psfrags}
\psfrag{_A}{$\widetilde{A}_2^1 =$}
\psfrag{_0}{$(0)$}
\psfrag{_+}{$\oplus$}
\psfrag{_1}{$(1)$}
\psfrag{_2}{$\left(\begin{array}{cc} 2 & 1\\ 1 & 2 \end{array}\right)$}
\psfrag{_3}{$\widetilde{A}_2^1 = \left(
\begin{array}{cccc}
0 & 0 & 0 & 0\\
0 & 1 & 0 & 0\\
0 & 0 & 2 & 1\\
0 & 0 & 1 & 2
\end{array}
\right)$}
\begin{figure}[h!]
\begin{center}
\hspace{-2cm}\includegraphics[height=4cm]{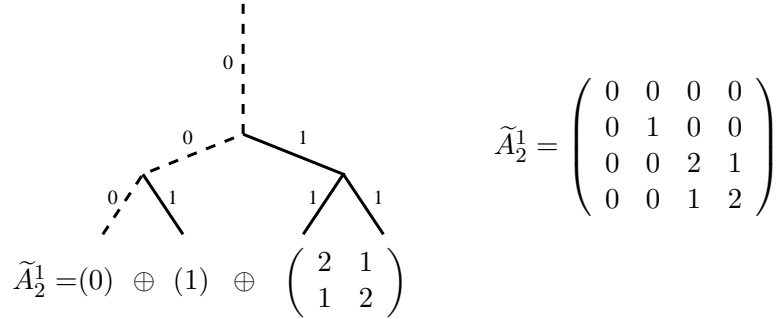}
\end{center}
\hspace{3cm}
\caption{Example of the matrix $\widetilde{A}_N^1$ for $N=2$. The numbers next to the branches represent their resistance. The matrix $\widetilde{A}_N^1$ is the direct sum of the resistance matrices of the subtrees built up by branches with non zero resistance.}
\end{figure}
\end{psfrags}
Now, system (\ref{syst2}) is equivalent to

\begin{equation}\label{M_2system}
\widetilde{M}_N(\varepsilon)\boldsymbol{q}^\varepsilon = \boldsymbol{\widetilde{b}_N}(\varepsilon)
\end{equation}
where
$$
\widetilde{M}_N(\varepsilon) = 
\left (\begin{array}{c}
(\widetilde{A}_N(\varepsilon) \boldsymbol{v_1})^\top \\
\vdots\\
(\widetilde{A}_N(\varepsilon) \boldsymbol{v_{2^N-1}})^\top  \\
\boldsymbol{u_N}^\top
\end{array} \right)
\text{ and }
\boldsymbol{\widetilde{b}_N}(\varepsilon) = 
\left (\begin{array}{c}
- \varepsilon \langle \boldsymbol{p},\boldsymbol{v_1}\rangle\\
\vdots\\
- \varepsilon \langle \boldsymbol{p},\boldsymbol{v_{2^N-1}}\rangle\\
\Phi
\end{array} \right)
$$
One has
$$
\lim_{\varepsilon \to 0} \widetilde{M}_{N}(\varepsilon) = 
\widetilde{M}_{N}
\text{ and }
\lim_{\varepsilon \to 0} \boldsymbol{\widetilde{b}_{N}}(\varepsilon) = 
\boldsymbol{\widetilde{b}_{N}}
$$
with
$$
\begin{array}{ccc}
\widetilde{M}_{N} =
\left (\begin{array}{c}
r_0 (\widetilde{A}_N^1 \boldsymbol{v_1})^\top \\
\vdots\\
r_0 (\widetilde{A}_N^1 \boldsymbol{v_{2^N-1}})^\top  \\
\boldsymbol{u_N}^\top
\end{array} \right)
&
\text{ and }
&
\boldsymbol{\widetilde{b}_N} =
\left (\begin{array}{c}
0 \\
\vdots \\
0 \\
\Phi \\
\end{array} \right)
\end{array}
$$

\begin{lmm}\label{MnInvert}
The matrix $\widetilde{M}_{N}$ is invertible.
\end{lmm}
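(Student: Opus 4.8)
The plan is to prove that the square matrix $\widetilde{M}_N$ is injective, which is equivalent to invertibility. So I would analyse the homogeneous system $\widetilde{M}_N\boldsymbol{q}=\boldsymbol{0}$ and show that it forces $\boldsymbol{q}=\boldsymbol{0}$.

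First I would translate the rows of $\widetilde{M}_N$ into conditions on $\boldsymbol{q}$. Since $\widetilde{A}_N^1$ is symmetric, the $i$-th of the first $2^N-1$ rows yields $\langle \boldsymbol{v_i},\widetilde{A}_N^1\boldsymbol{q}\rangle=0$, that is $(\widetilde{A}_N^1\boldsymbol{q})_i=(\widetilde{A}_N^1\boldsymbol{q})_{i+1}$. Letting $i$ run over $\llbracket 1,2^N-1\rrbracket$ forces every component of $\widetilde{A}_N^1\boldsymbol{q}$ to coincide, hence $\widetilde{A}_N^1\boldsymbol{q}=c\,\boldsymbol{u_N}$ for some $c\in\mathbb{R}$, while the last row gives $\langle \boldsymbol{u_N},\boldsymbol{q}\rangle=0$.

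Next I would exploit the block decomposition of Lemma \ref{decompAN}. Writing $\widetilde{A}_N^1=(0)\oplus\bigoplus_p B_p$, with the leading $1\times 1$ zero block sitting at the coordinate of the outlet $(N,1)$, and splitting $\boldsymbol{q}=(q_1,\boldsymbol{q}')$ accordingly, the first component of $\widetilde{A}_N^1\boldsymbol{q}$ vanishes. Comparing with $c\,\boldsymbol{u_N}$ immediately gives $c=0$, so $\bigl(\bigoplus_p B_p\bigr)\boldsymbol{q}'=\boldsymbol{0}$. Each $B_p$ is the resistance matrix of a uniform tree and is therefore symmetric positive definite, hence invertible, by the remark preceding Proposition \ref{Ainvertible}; consequently $\bigoplus_p B_p$ is invertible and $\boldsymbol{q}'=\boldsymbol{0}$. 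Finally the normalisation $\langle \boldsymbol{u_N},\boldsymbol{q}\rangle=0$ reduces to $q_1=0$, whence $\boldsymbol{q}=\boldsymbol{0}$ and $\widetilde{M}_N$ is invertible.

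The only delicate point is the bookkeeping behind Lemma \ref{decompAN}: one must check that the all-zero row and column of $\widetilde{A}_N^1$ really is the coordinate singled out by the last scalar equation, i.e. that the fully open path from the root to $(N,1)$ produces exactly the leading $(0)$ block (every common subpath with $(N,1)$ being a prefix of that zero-resistance path). Once this is granted, the zero block annihilates the multiplier $c$ and positive definiteness of the surviving blocks closes the argument; no estimate or limiting procedure is required.
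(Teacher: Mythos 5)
Your proof is correct and rests on exactly the same ingredients as the paper's: the block decomposition $\widetilde{A}_N^1=(0)\oplus\bigoplus_p B_p$ of Lemma \ref{decompAN}, the invertibility of the resistance blocks $B_p$, and hence $\textnormal{Ker}(\widetilde{A}_N^1)=\textnormal{Span}(\boldsymbol{e_1})$. The only difference is that you establish injectivity of $\widetilde{M}_N$ by solving the homogeneous system, whereas the paper proves linear independence of its rows $\bigl((\widetilde{A}_N^1\boldsymbol{v_i})^\top,\boldsymbol{u_N}^\top\bigr)$ — dual formulations of the same argument, with yours arguably avoiding the paper's slightly informal step about projecting the $\boldsymbol{v_i}$ onto the range of $\widetilde{A}_N^1$.
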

\begin{proof}
Let us show that the family $\left((\widetilde{A}_N^1 \boldsymbol{v_i})_{i=1 \dots 2^N-1}, \boldsymbol{u_N}\right)$ is linearly independent.
We consider some real numbers $(\lambda_i)_{i=1 \dots 2^N-1}$ and $\mu$ such that
\begin{equation}
\label{libre}
\mu \boldsymbol{u_N}+\sum_{i=1}^{2^{N}-1} \lambda_i \widetilde{A}_N^1 \boldsymbol{v_i} = 0
\end{equation}

Let us recall that, from Lemma \ref{decompAN}, $\widetilde{A}_N^1 = (0)\displaystyle \oplus \mathop{\bigoplus}_{p=0}^{N-2} B_p$ where $B_p$ is  a resistance matrix and is thus invertible.
Consequently, the range of $\widetilde{A}_N^1$ is $\textnormal{Span}\left( \boldsymbol{e_2}, \dots, \boldsymbol{e_{2^N}} \right)$, which implies that, in equation (\ref{libre}) the component along $\boldsymbol{e_1}$ is reduced to $\mu = 0$.

Now, the restriction of $\widetilde{A}_N^1$ on its range is a one-to-one correspondence as soon as the projection of the family $(\boldsymbol{v_i})_{i=1 \dots 2^{N}-1}$ on the range of $\widetilde{A}_N^1$ is a basis of the range of $\widetilde{A}_N^1$, which is clearly true in our case. Then, the family $(\widetilde{A}_N^1 \boldsymbol{v_i})_{i=1 \dots 2^N-1}$ is linearly independent and all $\lambda_i$'s are $0$.
\end{proof}\

Since the map $A \mapsto A^{-1}$ defined on the set of invertible matrices is continuous, the unique solution of $\widetilde{M}_N(\varepsilon) \boldsymbol{q}^\varepsilon = \boldsymbol{\widetilde{b}_N}$ converges to the unique solution of the system $\widetilde{M}_N \left( \displaystyle\lim_{\varepsilon \to 0} \boldsymbol{q}^\varepsilon \right) = \boldsymbol{\widetilde{b}_N} $ by virtue of Lemma \ref{MnInvert}. Furthermore, we know that $\widetilde{M}_N \boldsymbol{e_1} = (r_0 \boldsymbol{v_1}^\top {{}\widetilde{A}_N^1}^{\top} \boldsymbol{e_1}, \cdots, r_0 \boldsymbol{v_{2^N-1}}^\top {{}\widetilde{A}_N^1}^\top \boldsymbol{e_1}, \boldsymbol{u_N}^\top \boldsymbol{e_1}) = \boldsymbol{e_{2^N}}$  because ${{}\widetilde{A}_N^1}^\top = {\widetilde{A}_N^1}$ and $\textnormal{Ker}({\widetilde{A}_N^1}) = \textnormal{Span}\left( \boldsymbol{e_1} \right)$. Finally,

\begin{equation} 
\lim_{\varepsilon \to 0} \boldsymbol{q}^\varepsilon = \widetilde{M}_N^{-1} \boldsymbol{\widetilde{b}_N} = \Phi {\widetilde{M}_N}^{-1} \boldsymbol{e_{2^N}} = \Phi \boldsymbol{e_1}.
\end{equation}
Then, since we have $\boldsymbol{\widetilde{b}_N} = \varepsilon \boldsymbol{\gamma} + \Phi \boldsymbol{e_{2^N}}$, with $\boldsymbol{\gamma} \in \textnormal{Span}\left( e_1,\dots,e_{2^N-1} \right)$, we have $\boldsymbol{q}^\varepsilon = \widetilde{M}_N(\varepsilon)^{-1} \boldsymbol{\widetilde{b}_N} = \varepsilon \widetilde{M}_N(\varepsilon)^{-1} \boldsymbol{\gamma} + \Phi \boldsymbol{e_1}$ and
\begin{eqnarray*}
\mathcal{E}(\boldsymbol{q}^\varepsilon, \boldsymbol{\xi}^\varepsilon) & = & (\boldsymbol{q}^\varepsilon)^\top A_N(\boldsymbol{\xi}^\varepsilon) \boldsymbol{q}^\varepsilon\\
& = &\left( \varepsilon \boldsymbol{\gamma}^\top \left(\widetilde{M}_N(\varepsilon)^{-1}\right)^\top + \Phi \boldsymbol{e_1}^\top \right) A_N(\varepsilon) \left( \varepsilon \widetilde{M}_N(\varepsilon)^{-1} \boldsymbol{\gamma }+ \Phi \boldsymbol{e_1} \right)\\
& = &\varepsilon^2 \boldsymbol{\gamma}^\top \left(\widetilde{M}_N(\varepsilon)^{-1}\right)^\top A_N(\varepsilon) \widetilde{M}_N(\varepsilon)^{-1} \boldsymbol{\gamma}
+ \varepsilon \Phi \boldsymbol{\gamma}^\top \left(\widetilde{M}_N(\varepsilon)^{-1}\right)^\top A_N(\varepsilon) \boldsymbol{e_1}\\
 & & + \varepsilon \Phi \boldsymbol{e_1}^\top A_N(\varepsilon) \widetilde{M}_N(\varepsilon)^{-1} \boldsymbol{\gamma}+ \Phi^2 \boldsymbol{e_1}^\top A_N(\varepsilon) \boldsymbol{e_1}
\end{eqnarray*}

Since $\displaystyle \lim_{\varepsilon \rightarrow 0} \left(\varepsilon A_N(\varepsilon) \right) = \widetilde{A}_N^1$ and $\displaystyle \lim_{\varepsilon \to 0} \widetilde{M}_{N}(\varepsilon) = 
\widetilde{M}_{N}$, then one successively has

\begin{equation}
\begin{array}{lll}
\varepsilon^2 \boldsymbol{\gamma}^\top \left(\widetilde{M}_N(\varepsilon)^{-1}\right)^\top A_N(\varepsilon) \widetilde{M}_N(\varepsilon)^{-1} \boldsymbol{\gamma}& \underset{\varepsilon \rightarrow 0}{\longrightarrow} & 0\\
\varepsilon \Phi \boldsymbol{\gamma}^\top \left(\widetilde{M_N}(\varepsilon)^{-1}\right)^\top A_N(\varepsilon) \boldsymbol{e_1}&\underset{\varepsilon \rightarrow 0}{\longrightarrow}& 0\\
\varepsilon \Phi \boldsymbol{e_1}^\top A_N(\varepsilon) \widetilde{M}_N(\varepsilon)^{-1} \boldsymbol{\gamma}&\underset{\varepsilon \rightarrow 0}{\longrightarrow}& 0 \\
\Phi^2 \boldsymbol{e_1}^\top A_N(\varepsilon) \boldsymbol{e_1} &\underset{\varepsilon \rightarrow 0}{\longrightarrow} & \left(1+ \frac{N^2}{\Lambda - 1}\right)r_0 \Phi^2,\\
\end{array}
\end{equation}
because $\widetilde{A}_N^1 \boldsymbol{e_1} = 0$, $\boldsymbol{e_1}^\top {{}\widetilde{A}_N^1}^\top = 0$ and $\boldsymbol{e_1}^\top A_N(\varepsilon) \boldsymbol{e_1} = r_0 + \frac{N r_0}{\frac{\Lambda - 1}{N} - \left(\frac{2^{N+1}-2}{N}-1\right) \varepsilon}$.

Hence, 
$$
\lim_{\varepsilon \to 0} \mathcal{E}(\boldsymbol{q}^\varepsilon, \boldsymbol{\xi}^\varepsilon)
= r_0 \Phi^2 \left( \frac{N^2}{\Lambda - 1} + 1 \right) = m,
$$
which concludes the proof.

\section{Case of a fluid driven by Navier-Stokes equations: some numerical results}

As stated in the introduction, we study in this section an infinite dimensional optimization problem. Now the unknown is the whole shape of the dyadic tree and we do not anymore neglect the connections at bifurcations nor do we constrain the branches to remain cylindrical. Moreover, we will now use the full non linear Navier-Stokes equations and still minimize the dissipated energy of the fluid. Our goal is to compare the shapes obtained in this case by numerical means to that obtained theoretically for Poiseuille's laws in the previous sections. Nevertheless, we will not be able to catch the behaviour of the case with identical pressures imposed at each outlets since it is unstable. It is not possible in our numerical simulations to make the pressure exactly equal because of the rounding errors and the local meshes. Only a dedicated algorithm that would include the symmetry and prevent the closing of branches would be able to catch this behaviour. Consequently, from the theoretical study, we expect to observe the closing of every branches except one.

\par We focus here on the 3D case but the 2D case can be easily adapted from this 3D study and numerical results will be presented both in 2D and 3D.

\par The partial differential equations describing the behaviour of the fluid and the boundary conditions is now more general.
In particular, this model is a convenient choice for the modeling of the bronchial tree (see for instance \cite{mauroy-3DHydro, mauroy-interplay, mauroy-optimalTreeDangerous, mauryMeunier}).
\par Let us recall some classical definitions in incompressible Fluid Mechanics.
\begin{dfntn}
Let $\boldsymbol{u}$ be a smooth vector field of $\mathbb{R}^3$ (for instance $C^1$). We define
\begin{enumerate}
\item the stretching tensor of $\boldsymbol{u}$ (symmetric part of the gradient tensor):
$$
\varepsilon (\boldsymbol{u})= \left(\frac{1}{2}\left(\frac{\partial
u_i}{\partial x_j}+\frac{\partial u_j}{\partial
x_i}\right)\right)_{1\leq i,j \leq 3}.
$$
\item the doubly contracted product of two stretching tensors $\varepsilon (\boldsymbol{u})$ and $\varepsilon (\boldsymbol{v})$:
$$
\varepsilon (\boldsymbol{u}):\varepsilon (\boldsymbol{v})= \frac{1}{4}\sum_{i=1}^3\sum_{j=1}^3\left(\frac{\partial u_i}{\partial x_j}+\frac{\partial u_j}{\partial x_i}\right)\left(\frac{\partial v_i}{\partial x_j}+\frac{\partial v_j}{\partial x_i}\right).
$$
\item the stress tensor of $(\boldsymbol{u},p)$, where $\boldsymbol{u}$ is a smooth vector field of $\mathbb{R}^3$ representing the velocity of a fluid whose viscosity is $\mu>0$ and $p$ a function representing the pressure defined on $\mathbb{R}^3$:
$$
\sigma (\boldsymbol{u},p)=-pI_3+2\mu \varepsilon (\boldsymbol{u}),
$$
where $I_3$ is the identity tensor of $\mathbb{R}^3$.
\end{enumerate}
\end{dfntn}
\subsection{The shape optimization problem}\label{pres}
We now give some precisions on the frame of our study. Let $\Omega$ be a generic three dimensional $Y$-shaped domain. We will denote by $\partial\Omega$ the boundary of $\Omega$. In the sequel, we will assume that the inlet $E$ of $\Omega$ is a disk and that the outlet $S$ consists in two identical disks. We decompose the boundary of $\Omega$ as the disjoint union $\partial \Omega =E\cup \Gamma\cup S$. $\Gamma$, the lateral boundary, is the main unknown or the shape we want to determine. In the following, we will thus consider that $E$ and $S$ are fixed part of the boundary.
\par We assume that a fluid driven by the Navier-Stokes equations flows in $\Omega$. In particular, this model is convenient to represent the upper part of the bronchial tree since the velocity of the air at the beginning of the trachea is important enough to consider that the flow is inertial or even turbulent (the Reynolds number in trachea is about $1000$ at rest). 
Then, the velocity $\boldsymbol{u}:\Omega \rightarrow \mathbb{R}^3$ and the pressure $p:\Omega\rightarrow \mathbb{R}$, are solutions of the Navier-Stokes system
\begin{equation}\label{NS1}
\left\{\begin{array}{ll}
\displaystyle -\mu \Delta \boldsymbol{u}+\nabla p+\rho \boldsymbol{u}\cdot\nabla \boldsymbol{u}=0 & x\in \Omega \\
\displaystyle \nabla\cdot \boldsymbol{u}=0 & x\in \Omega \\
\boldsymbol{u}=\boldsymbol{u_0} & x\in E \\
\boldsymbol{u}=0 & x\in \Gamma \\
-p\boldsymbol{n}+2\mu \varepsilon (\boldsymbol{u})\boldsymbol{n}=\boldsymbol{h} & x\in S,
\end{array}
\right.
\end{equation}
where
\begin{itemize}
\item $\boldsymbol{n}$ denotes the outward-pointing unit normal vector at a given point of the boundary $\partial\Omega$,
\item $\boldsymbol{u_0}$ is a parabolic velocity profile (\textit{i.e.}~a Poiseuille's flow is imposed at the inlet $E$), that is
$$
\boldsymbol{u_0}=^\top\!\!(0,0,c(r^2-R^2)),
$$
where $c$ is a negative constant so that the flow is ingoing, and $R$ the radius of the inlet.
\end{itemize}

\begin{rmrk}
Let us clarify the choice of the boundary conditions for this model. 
\begin{itemize}
\item The condition $\boldsymbol{u}=0$ on $\Gamma$ is the so-called {\it no-slip} boundary condition and means that the fluid ``sticks'' to the wall.
\item Practically speaking, we will impose $\boldsymbol{h}=-p_0\boldsymbol{n}$ at the outlet $S$. This condition comes more or less from the assumption that the pressure is the sole force acting on $S$ and that it is known and equal to $p_0$. Drawing a parallel with the modeling of the bronchial tree, these conditions simulate muscles applying the same force per unit of surface for each outlet ($-p_0 \boldsymbol{n}$) and thus using the same energy for each (see \cite{mauryMeunier}). Similar boundary conditions are more detailed in \cite[chapter 5]{BF} and in \cite{bruneau,bruneau2}.\\
\end{itemize}
\end{rmrk}
Notice that the classical theory of Navier-Stokes equations (see \cite{galdi,temam}) gives information on the existence and uniqueness of the solution of System (\ref{NS1}):
\begin{thrm}
\label{thmNS}
Let $\Omega$ be a bounded Lipschitz domain of $\mathbb{R}^3$. Let us assume that $\boldsymbol{u_0}$ belongs to the Sobolev space $(H^{{3/2}}(E))^3$ and $\boldsymbol{h}$ belongs to $(H^{{1/2}}(S))^3$. There exists $A>0$ such that if the
viscosity $\mu$ is larger than $A$, then Problem (\ref{NS1}) has a
unique solution $(\boldsymbol{u},p)\in (H^1 (\Omega ))^3\times L^{2}(\Omega )$.
\end{thrm}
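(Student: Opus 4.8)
The plan is to treat this as a standard stationary Navier--Stokes problem with mixed boundary conditions and to obtain existence and uniqueness through a fixed-point argument in which the largeness of $\mu$ forces the viscous term to dominate the inertial one. First I would reduce to a homogeneous Dirichlet condition on $E\cup\Gamma$. Since $\boldsymbol{u_0}\in (H^{3/2}(E))^3$ is more regular than the trace space $(H^{1/2}(\partial\Omega))^3$ of $(H^1(\Omega))^3$, and since no Dirichlet datum is prescribed on $S$, one can construct a divergence-free lifting $\boldsymbol{U_0}\in (H^1(\Omega))^3$ with $\boldsymbol{U_0}=\boldsymbol{u_0}$ on $E$ and $\boldsymbol{U_0}=0$ on $\Gamma$; the net flux entering through $E$ is absorbed by the free outflow on $S$, so there is no compatibility obstruction. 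Writing $\boldsymbol{u}=\boldsymbol{w}+\boldsymbol{U_0}$ reduces the problem to finding $\boldsymbol{w}$ in
$$
V=\{\boldsymbol{v}\in (H^1(\Omega))^3 \mid \boldsymbol{v}=0 \textrm{ on } E\cup\Gamma\}.
$$

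Next I would derive the weak formulation. Multiplying by a test function in $V$ and integrating by parts, the traction condition $-p\boldsymbol{n}+2\mu\varepsilon(\boldsymbol{u})\boldsymbol{n}=\boldsymbol{h}$ on $S$ arises as a natural boundary term, so the variational problem reads: find $(\boldsymbol{w},p)$ with $\nabla\cdot\boldsymbol{w}=0$ and
$$
2\mu\int_\Omega \varepsilon(\boldsymbol{w}):\varepsilon(\boldsymbol{v})-\int_\Omega p\,\nabla\cdot\boldsymbol{v}+\rho\int_\Omega ((\boldsymbol{w}+\boldsymbol{U_0})\cdot\nabla(\boldsymbol{w}+\boldsymbol{U_0}))\cdot\boldsymbol{v}=\int_S\boldsymbol{h}\cdot\boldsymbol{v}-(\textrm{data terms})
$$
for all $\boldsymbol{v}\in V$. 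The two structural ingredients are: coercivity of the viscous form, following from Korn's inequality together with a Poincar\'e inequality (valid because $E\cup\Gamma$ has positive surface measure), giving $2\mu\int_\Omega \varepsilon(\boldsymbol{v}):\varepsilon(\boldsymbol{v})\ge 2\mu\alpha\|\boldsymbol{v}\|_{(H^1(\Omega))^3}^2$; and continuity of the trilinear convective form $b(\boldsymbol{a},\boldsymbol{b},\boldsymbol{c})=\rho\int_\Omega(\boldsymbol{a}\cdot\nabla\boldsymbol{b})\cdot\boldsymbol{c}$, following from the Sobolev embedding $(H^1(\Omega))^3\hookrightarrow (L^4(\Omega))^3$ in dimension three, giving $|b(\boldsymbol{a},\boldsymbol{b},\boldsymbol{c})|\le C\|\boldsymbol{a}\|\,\|\boldsymbol{b}\|\,\|\boldsymbol{c}\|$. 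The pressure is recovered in $L^2(\Omega)$ from a De Rham / inf--sup (LBB) argument adapted to the mixed boundary conditions.

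Then I would introduce the map $T:\boldsymbol{z}\mapsto\boldsymbol{w}$, where $\boldsymbol{w}$ solves the linear Oseen problem obtained by freezing the convecting velocity at $\boldsymbol{z}$; each such linear problem is well posed by Lax--Milgram on the divergence-free subspace, the pressure being restored through the inf--sup condition. Testing the equation for $\boldsymbol{w}=T\boldsymbol{z}$ with $\boldsymbol{w}$ itself and combining coercivity with the bound on $b$ yields an a priori estimate of the form $\alpha\mu\|\boldsymbol{w}\|\le C_1+C_2\|\boldsymbol{z}\|^2$, so that for $\mu$ large $T$ maps a fixed closed ball of $V$ into itself. Estimating $T\boldsymbol{z_1}-T\boldsymbol{z_2}$ the same way, the difference is controlled by $(C/\mu)\|\boldsymbol{z_1}-\boldsymbol{z_2}\|$ on that ball, so $T$ becomes a contraction once $\mu$ exceeds a threshold $A$ depending only on $\rho$, $\Omega$, and the norms of the data. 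Banach's fixed-point theorem then provides a unique $\boldsymbol{w}$, hence a unique $(\boldsymbol{u},p)\in(H^1(\Omega))^3\times L^2(\Omega)$.

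I expect the main obstacle to be the careful handling of the mixed (Dirichlet on $E\cup\Gamma$, traction on $S$) boundary conditions: checking that the Poincar\'e--Korn coercivity constant $\alpha$ is genuinely positive in this setting, building the divergence-free lifting compatibly with the free outflow on $S$, and establishing the inf--sup condition required to recover the pressure in $L^2(\Omega)$. Once these are secured, quantifying the threshold $A$ so that the invariance of the ball and the contraction estimate hold simultaneously is routine, and it is precisely this smallness of $1/\mu$ that both guarantees existence and rules out multiple solutions.
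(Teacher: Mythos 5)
Your outline is essentially correct and coincides with what the paper relies on: the paper gives no proof of Theorem~\ref{thmNS}, citing instead the classical theory (Galdi, Temam, and Boyer--Fabrie for the mixed Dirichlet/traction conditions), and that theory proceeds exactly as you describe --- divergence-free lifting of the inlet datum, Korn--Poincar\'e coercivity on the subspace vanishing on $E\cup\Gamma$, continuity of the convective trilinear form via $H^1\hookrightarrow L^4$, a Banach fixed point for an Oseen-type iteration once $\mu$ exceeds a data-dependent threshold, and recovery of the pressure in $L^2(\Omega)$ through an inf--sup argument. You also correctly identify the genuine technical point, namely that the traction condition on $S$ destroys the antisymmetry of the convective term, so the energy estimate only closes thanks to the largeness of $\mu$, which is precisely why the theorem is stated with the hypothesis $\mu>A$.
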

Let us now introduce the shape optimization problem we want to solve. The objective functional $J(\Omega)$ is the energy dissipated by the fluid, \textit{i.e.}
\begin{equation}
J(\Omega )=2\mu \int_\Omega |\varepsilon
(\boldsymbol{u})|^2\d x 
\end{equation}
To make the statement of the optimization problem precise, we need to define a class of admissible shapes. As classically in shape optimization, we fix the measure $V_0>0$ of $\Omega$. Let us introduce
\begin{equation}\label{class1}
\mathcal{O}_{V_0} = \left\{\Omega
\textnormal{ bounded and simply connected domain in } \mathbb{R} ^3\textnormal{ containing $E$ and $S$, such that }
\textrm{meas}(\Omega)= V_0\right\}.
\end{equation}
The shape optimization problem writes
\begin{equation}\label{pbOptim1}
\left\{\begin{array}{l}
\min J(\Omega)\\
\Omega\in \mathcal{O}_{V_0}.
\end{array}\right.
\end{equation}
The question of knowing if Problem (\ref{pbOptim1}) has or not a solution is still an open problem. Nevertheless, it is possible to show that a problem, very close to Problem (\ref{pbOptim1}) but a little bit constrained, has a solution.
\par Restricting the set of admissible shapes is a very common approach in shape optimization, since these problems are often ill-posed (see for instance \cite{allaire,HP}). A very close existence theorem is announced in \cite{HPri} and proved in \cite{Henrot-Privat}, considering instead of the set $\mathcal{O}_{V_0}$, a set of domains verifying an $\varepsilon$-cone property, which yields some kind of uniform regularity (see \cite{Che1,HP,DZ} for some reminders on the $\varepsilon$-cone property and its consequences on the existence of optimal shapes).

\subsection{Computation of the shape derivative of $J$}
The classical algorithm we will recall in Appendix \ref{augLagAlg} is based on the use of the well known shape derivative.
We recall here the expression of such a derivative. The details of its calculation are given in \cite{Henrot-Privat}.
\par It is possible to define an adjoint state for System (\ref{NS1}) that is useful to write the shape derivative in a very usual form (see \cite[Theorem 5.9.2]{HP}). Let $(\boldsymbol{v},q)$ be solution (in the case where it exists) of the linearized Navier-Stokes system
\begin{equation}
\left\{\begin{array}{ll}\label{PAdjoinNS}
\displaystyle -\mu \Delta \boldsymbol{v} + (\nabla \boldsymbol{u})^\top\!\! \ \boldsymbol{v} - (\nabla \boldsymbol{v}) \ u +\nabla q =-2\mu \Delta \boldsymbol{u} & x\in \Omega \\
\nabla\cdot \boldsymbol{v}=0 & x\in \Omega \\
\boldsymbol{v}=\boldsymbol{0} & x\in E\cup \Gamma \\
-q\boldsymbol{n}+2\mu \varepsilon (\boldsymbol{v})\boldsymbol{n}+(\boldsymbol{u}\cdot \boldsymbol{n})\boldsymbol{v}-4\mu \varepsilon (\boldsymbol{u}) \boldsymbol{n}=0 & x\in S. 
\end{array}
\right.
\end{equation}
The following existence and uniqueness result is established in \cite[Proposition 3.1]{Henrot-Privat}.
\begin{prpstn}\label{ExistenceRegulariteAdjoint}
Let $\Omega$ be a bounded Lipschitz domain of $\mathbb{R}^3$. There exists $A > 0$ such that, if the viscosity $\mu$ is larger than $A$, then the problem
(\ref{PAdjoinNS}) has a unique solution $(\boldsymbol{v},q)$. Moreover,
this solution belongs at least to $(H^1 (\Omega))^3\times L^{2}(\Omega)$.
\end{prpstn}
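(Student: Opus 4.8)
The plan is to put Problem~(\ref{PAdjoinNS}) in weak form and treat it, exactly as the direct system~(\ref{NS1}) is treated in Theorem~\ref{thmNS}, by a Lax--Milgram argument on a divergence-free velocity space followed by a pressure-recovery step. The decisive simplification is that here $\boldsymbol{u}$ is a \emph{fixed} datum, namely the solution of~(\ref{NS1}) furnished by Theorem~\ref{thmNS}; hence Problem~(\ref{PAdjoinNS}) is \emph{linear} in $(\boldsymbol{v},q)$, so no fixed-point iteration is needed and all the difficulty is concentrated in one coercivity estimate.

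First I would introduce the space $\mathscr{V}=\{\boldsymbol{w}\in(H^1(\Omega))^3 \mid \boldsymbol{w}=0 \textnormal{ on } E\cup\Gamma \textnormal{ and } \nabla\cdot\boldsymbol{w}=0\}$. Multiplying the momentum equation by $\boldsymbol{w}\in\mathscr{V}$, integrating by parts, and using the boundary condition on $S$ to substitute the normal stress $-q\boldsymbol{n}+2\mu\varepsilon(\boldsymbol{v})\boldsymbol{n}=-(\boldsymbol{u}\cdot\boldsymbol{n})\boldsymbol{v}+4\mu\varepsilon(\boldsymbol{u})\boldsymbol{n}$, one is led to the variational problem: find $\boldsymbol{v}\in\mathscr{V}$ such that $a(\boldsymbol{v},\boldsymbol{w})=\ell(\boldsymbol{w})$ for every $\boldsymbol{w}\in\mathscr{V}$, where
\begin{align*}
a(\boldsymbol{v},\boldsymbol{w})={}&2\mu\int_\Omega\varepsilon(\boldsymbol{v}):\varepsilon(\boldsymbol{w})\,\d x+\int_\Omega(\nabla\boldsymbol{u})^\top\boldsymbol{v}\cdot\boldsymbol{w}\,\d x\\
&-\int_\Omega((\nabla\boldsymbol{v})\boldsymbol{u})\cdot\boldsymbol{w}\,\d x+\int_S(\boldsymbol{u}\cdot\boldsymbol{n})\,\boldsymbol{v}\cdot\boldsymbol{w}\,\d x,
\end{align*}
and $\ell$ is the continuous linear form assembling the right-hand side $-2\mu\Delta\boldsymbol{u}$ (integrated by parts, so as to make sense for $\boldsymbol{u}\in(H^1(\Omega))^3$) together with the $4\mu\varepsilon(\boldsymbol{u})\boldsymbol{n}$ boundary contribution. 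Continuity of $a$ and $\ell$ follows from the Cauchy--Schwarz inequality, the Sobolev embedding $H^1(\Omega)\hookrightarrow L^4(\Omega)$ in dimension three (which controls $\int_\Omega(\nabla\boldsymbol{u})^\top\boldsymbol{v}\cdot\boldsymbol{w}$ by $\|\nabla\boldsymbol{u}\|_{L^2}\|\boldsymbol{v}\|_{L^4}\|\boldsymbol{w}\|_{L^4}$) and the trace theorem on $S$.

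The heart of the proof, and the step I expect to be the main obstacle, is coercivity. Testing with $\boldsymbol{w}=\boldsymbol{v}$, Korn's inequality together with the Poincaré inequality (available since $\boldsymbol{v}$ vanishes on the part $E\cup\Gamma$ of positive surface measure) gives $2\mu\int_\Omega|\varepsilon(\boldsymbol{v})|^2\,\d x\geq 2\mu C_K\|\boldsymbol{v}\|_{H^1}^2$. The convective term is integrated by parts using $\nabla\cdot\boldsymbol{u}=0$ and $\boldsymbol{v}=0$ on $E\cup\Gamma$, which yields $-\int_\Omega((\nabla\boldsymbol{v})\boldsymbol{u})\cdot\boldsymbol{v}\,\d x=-\tfrac12\int_S(\boldsymbol{u}\cdot\boldsymbol{n})|\boldsymbol{v}|^2\,\d x$; added to the outflow term $\int_S(\boldsymbol{u}\cdot\boldsymbol{n})|\boldsymbol{v}|^2\,\d x$ this leaves exactly $\tfrac12\int_S(\boldsymbol{u}\cdot\boldsymbol{n})|\boldsymbol{v}|^2\,\d x$, which is nonnegative for outgoing flow (and in any case dominated by the viscous term for $\mu$ large, via the trace inequality). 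This is precisely the role of the extra term $(\boldsymbol{u}\cdot\boldsymbol{n})\boldsymbol{v}$ on $S$ in~(\ref{PAdjoinNS}). The only sign-indefinite contribution left is the zeroth-order term, bounded by $C\|\nabla\boldsymbol{u}\|_{L^2}\|\boldsymbol{v}\|_{H^1}^2$, so that $a(\boldsymbol{v},\boldsymbol{v})\geq(2\mu C_K-C\|\nabla\boldsymbol{u}\|_{L^2})\|\boldsymbol{v}\|_{H^1}^2$. The subtle point is that $\boldsymbol{u}$ itself depends on $\mu$; but the a priori energy estimate behind Theorem~\ref{thmNS} keeps $\|\nabla\boldsymbol{u}\|_{L^2}$ bounded as $\mu$ grows, so there is a threshold $A>0$ beyond which the bracket is positive. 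For $\mu>A$, Lax--Milgram then provides a unique $\boldsymbol{v}\in\mathscr{V}$.

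It remains to recover the pressure. The linear form $\boldsymbol{w}\mapsto\ell(\boldsymbol{w})-a(\boldsymbol{v},\boldsymbol{w})$, considered now on all of $\{\boldsymbol{w}\in(H^1(\Omega))^3\mid\boldsymbol{w}=0\textnormal{ on }E\cup\Gamma\}$, vanishes on the divergence-free subspace $\mathscr{V}$; by the surjectivity of the divergence operator onto $L^2(\Omega)$ (equivalently, the inf--sup/De Rham theorem for this mixed Dirichlet--Neumann setting, in which the nonempty Neumann part $S$ pins down the pressure without any additive-constant indeterminacy), there is a unique $q\in L^2(\Omega)$ generating the missing $\nabla q$ term and the normal-stress condition on $S$. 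This produces a solution $(\boldsymbol{v},q)\in(H^1(\Omega))^3\times L^2(\Omega)$ of~(\ref{PAdjoinNS}); uniqueness is inherited from the coercivity of $a$ and the inf--sup bound, which finishes the argument. The whole scheme mirrors the proof of the direct result in Theorem~\ref{thmNS}, the essential difference being the linearity exploited above.
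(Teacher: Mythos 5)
The paper gives no proof of this proposition in the text: it simply refers to \cite[Proposition 3.1]{Henrot-Privat}, where the result is established by exactly the route you describe --- a weak formulation on the divergence-free space, Lax--Milgram with coercivity obtained from Korn's inequality after absorbing the lower-order and boundary terms for $\mu$ large (using that $\Vert \boldsymbol{u}\Vert_{H^1}$ stays bounded as $\mu$ grows), and a De Rham/inf--sup step to recover $q$. Your argument is correct and is essentially the same as the one in the cited reference, including the key observation that the $-2\mu\Delta\boldsymbol{u}$ source and the $4\mu\varepsilon(\boldsymbol{u})\boldsymbol{n}$ boundary term combine into the well-defined form $4\mu\int_\Omega\varepsilon(\boldsymbol{u}):\varepsilon(\boldsymbol{w})\,\d x$.
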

Notice that the restriction in the size of the viscosity $\mu$ ensures the existence, the uniqueness and a sufficient regularity of $\boldsymbol{u}$, solution of the Navier-Stokes equation \ref{NS1} (see theorem \ref{thmNS}).

\par We are now able to define the derivative of $J$ with respect to the domain. Let us consider a regular vector field $\boldsymbol{ V}:\mathbb{R}^3\to\mathbb{R}^3$ with compact support which does not meet neither $E$ nor $S$. For small $t$, we
define  $\Omega _t=(I+t\boldsymbol{ V})\Omega$, the image of $\Omega$ by a
perturbation of identity and $f(t)=  J(\Omega _t)$. We recall that
the shape derivative of $J$ at $\Omega$ with respect to $\boldsymbol{ V}$
is $f'(0)$. We will denote it by $\d J(\Omega;\boldsymbol{ V})$. To compute
it, we first need to compute the derivative of the state equation.
We use here the classical results on shape derivatives as in
\cite{HP}, \cite{Mu-Si}, \cite{So-Zo}. The derivative of
$(\boldsymbol{u},p)$ is the solution of the linear system
\begin{equation}
\left\{\begin{array}{ll} \label{PDérivé}
\displaystyle -\mu \Delta \boldsymbol{u'}+\boldsymbol{u'}\cdot\nabla \boldsymbol{u}+\boldsymbol{u}\cdot\nabla \boldsymbol{u'}+\nabla p'=0 & x\in \Omega \\
\nabla\cdot \boldsymbol{u'}=0 & x\in \Omega \\
\boldsymbol{u'}=\boldsymbol{0} & x\in E \\
\displaystyle  \boldsymbol{u'}=-\frac{\partial \boldsymbol{u}}{\partial \boldsymbol{n}}
(\boldsymbol{V}\cdot \boldsymbol{n}) & x\in \Gamma \\
\displaystyle -p'\boldsymbol{n}+2\mu \varepsilon (\boldsymbol{u'})\boldsymbol{n}=0 & x\in S,
\end{array}
\right.
\end{equation}
where $\boldsymbol{u}$ is, under assumption that $\mu$ is large enough, the unique solution of (\ref{NS1}). Similar formula in the context of Shape Optimization in Fluid Mechanics are established for instance in \cite{mo-pi,Piro,simon1,simon2}.
\par Now, we have (see \cite{HP}, \cite{So-Zo})
\begin{equation}\label{deriv1}
    \d J (\Omega ; \boldsymbol{V})=4\mu \int_\Omega \varepsilon
(\boldsymbol{u}):\varepsilon (\boldsymbol{u'})\d x+2\mu \int_\Gamma
|\varepsilon (\boldsymbol{u})|^2(\boldsymbol{ V}\cdot \boldsymbol{n})\d s.
\end{equation}
Using the adjoint state (\ref{PAdjoinNS}), it is possible to rewrite $\d J (\Omega , \boldsymbol{V})$ in a more workable form (see \cite[Proposition 3.2]{Henrot-Privat}):
\begin{equation}
\label{DeriveeCritere} \d J (\Omega ; \boldsymbol{V})=2\mu \int_\Gamma
\left(\varepsilon (\boldsymbol{u}):\varepsilon (\boldsymbol{v})-|\varepsilon
(\boldsymbol{u})|^2\right)(\boldsymbol{V}\cdot\boldsymbol{n})\d s .
\end{equation}\
We will now introduce the shape gradient of the criterion $J$ as a functional defined on the boundary such that $\d J (\Omega ; \boldsymbol{V})=\int_{\Gamma}\nabla J(\Omega)\cdot \boldsymbol{V}\d s$, in other words
\begin{equation}
\label{shapegrad}
\nabla J (\Omega)=2\mu \left(\varepsilon (\boldsymbol{u}):\varepsilon (\boldsymbol{v})-|\varepsilon
(\boldsymbol{u})|^2\right)\boldsymbol{n}.
\end{equation}

Thanks to this expression of the shape gradient, we are now able to implement an augmented Lagrangian algorithm in order to solve Problem (\ref{pbOptim1}). The algorithm is classical and is described in Appendix \ref{augLagAlg}.

\subsection{Numerical results}\label{numResults}

In this section, we present 2D and 3D numerical simulations using the augmented Lagrangian algorithm described in Appendix \ref{augLagAlg} based on the shape gradient (\ref{shapegrad}). They have been implemented with the script of the software {\sl Comsol Multiphysics}. The 2D algorithm is an easy adaptation of the 3D algorithm.

The Navier-Stokes systems and the adjoint state are solved with a direct finite elements method using Lagrange elements $P1$ for pressures and $P2$ for velocities. The displacements of the mesh are $P1$. The multifrontal package (UMFPACK) is used to solve the resultant linear system and a modified Newton like method is used to treat the non linear term. At each iteration $k$, each node of the geometry is perturbed by the discretized operator $(I+\varepsilon_k\boldsymbol{d_k})(\Omega_k)$ (classical Arbitrary Lagrangian Eulerian method) and it is necessary to remesh the geometry when the displacement of the mesh becomes too large.

The Reynolds number in the following computations is $100$ which is not only large enough to observe inertial effects near the bifurcation, but small enough to let the computations run in a reasonable amount of time. The effect of inertia on flow distribution in the bifurcation can be seen on Figure \ref{inertia} where the image on the left represents a non inertial flow (Reynolds $0$) and the image on the right an inertial flow with Reynolds $100$.

\begin{figure}[!h]
\begin{center}
\includegraphics[height=4cm, angle=-90, clip=true, trim=10cm 20cm 10cm 5cm]{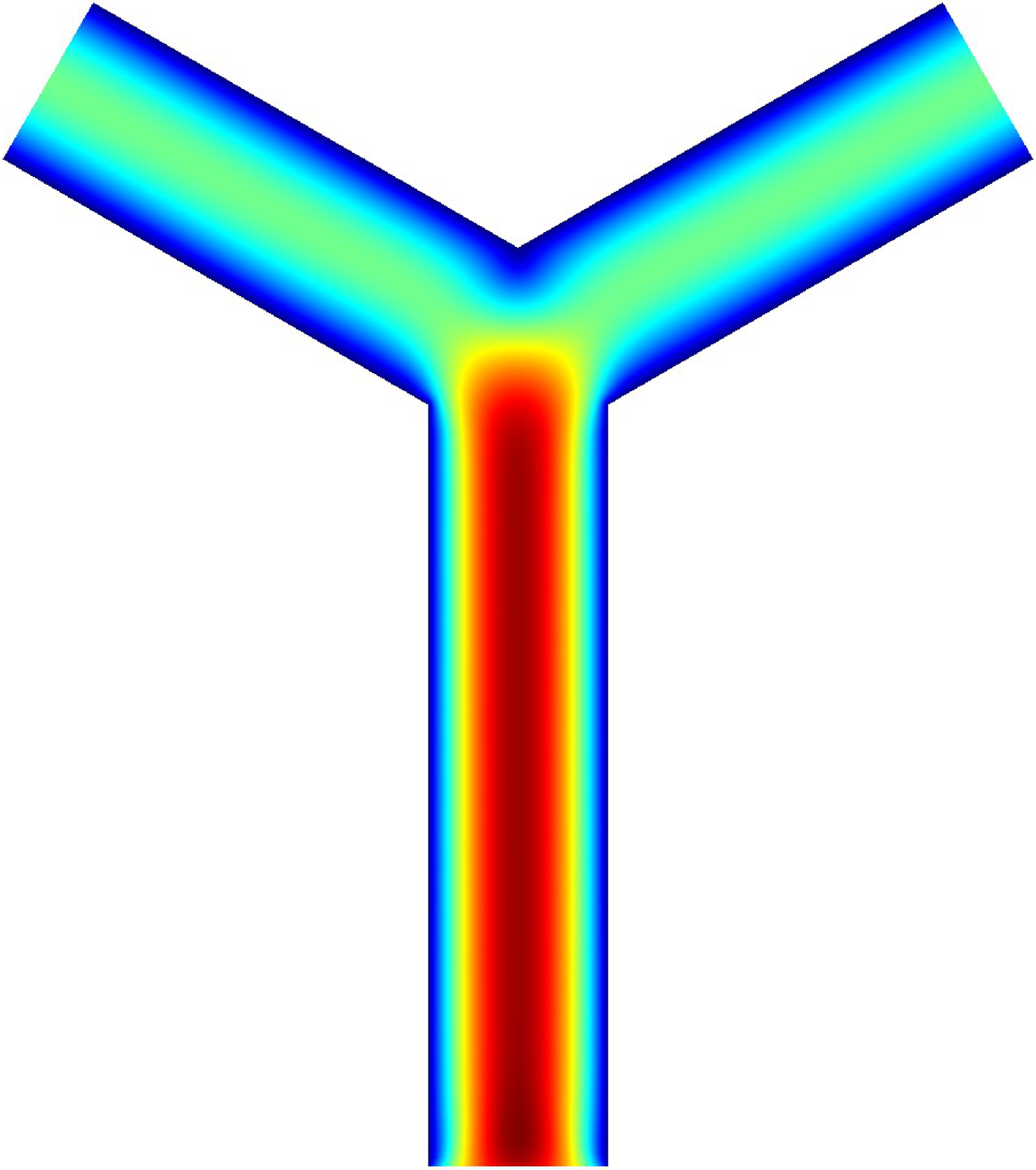}
\includegraphics[height=4cm, angle=-90, clip=true, trim=10cm 20cm 10cm 5cm]{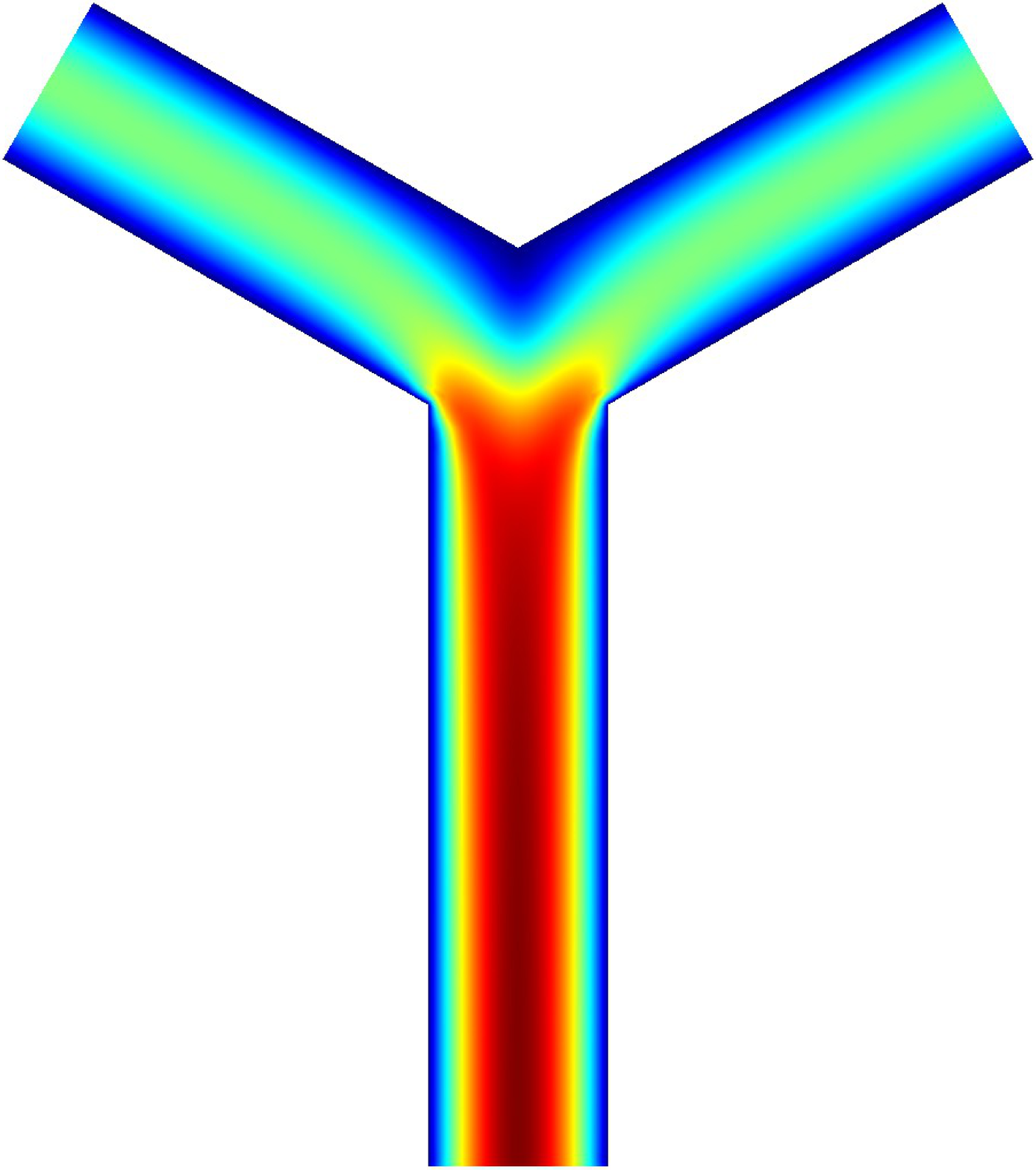}
\caption{Velocity norm (jet colors mapping) in a 2D bifurcation for a non inertial flow with Reynolds $0$ (left) and for an inertial flow with Reynolds $100$ (right). Inertia alters the flow mostly around the bifurcation.}
\label{inertia}
\end{center}
\end{figure}

We normalized the applications $J(\cdot)$ and $V(\cdot)$ in such a way that $J(\Omega_0)=1$ and $V(\Omega_0)=1$. 

\subsubsection{1st Case: Neumann conditions are imposed at the inlet and Dirichlet conditions are imposed at the outlets}
\label{vconnuNS}

In this section, the initial geometry $\Omega_0$ is a bifurcation whose branches are identical except for the length of the mother branch which is $1.5$ the length of the daughter branches, see Figure \ref{numres12D} (2D, left image) and Figure \ref{numres1} (3D, left image). The length of the mother branch has been chosen longer to avoid that the flow near the bifurcation interacts too much with the inlet flow since the algorithm tends to shorten the mother branch. Practically, a mother branch too short leads to convergence problem. The angle between two nearby branches is $\pi/3$.

The boundary conditions correspond to that of Case \ref{vconnu} adapted to the non-linear regime. Thus a parabolic velocity profile is imposed at each outlets (Dirichlet condition) and Neumann boundary conditions $\sigma(p,\boldsymbol{u})\boldsymbol{n}=-p_0\boldsymbol{n}$ are imposed at the inlet, which is ``formally'' interpreted as a pressure constraint since viscous effects are negligible. The pressure imposed at the inlet is $0$.

Figures \ref{numconv12D} (2D) and \ref{numconv1} (3D) show the convergence of the different quantities of the problem: the Lagrange multiplier (upper left), the Lagrangian function (upper right), the viscous energy (lower left) and the volume (lower right). Except the Lagrange multiplier, these quantities have been normalized. As expected, the curves are oscillating around their asymptotic values since verifying the volume constraint and minimizing the criterion are advantaged one after the other by the algorithm. This ``advantage'' is controlled by the Lagrange multiplier. Hence when it is too large the volume constraint is stronger and the algorithm authorized an increase of the criterion; if it is too small then the algorithm let loose the constraint and decrease the criterion. The amplitude of these oscillations decreases with the iterations.

In 2D, the mesh consists in $9296$ triangle elements and no remeshing was needed to ensure convergence. Convergence was reached for $1000$ iterations of the algorithm in about 6 hours using 2 cores of a Xeon processor (2.33 GHz). The viscous energy dissipated in the bifurcation, which is the quantity minimized by our algorithm, has been reduced by $20.4 \%$ in the final geometry in comparison with its value in the initial geometry. Volume precision at convergence was smaller than $2 \times 10^{-6}$.

In 3D, the mesh consists in $17959$ tetrahedral elements and no remeshing was needed either. Convergence was reached for $1700$ steps. Computation time was about $46$ hours using 2 cores of a Xeon processor (2.33 GHz). The viscous energy dissipated in the bifurcation has been reduced by $22.5 \%$ between the initial and final geometry. The volume constraint at the end of the algorithm was smaller than $3 \times 10^{-5}$.

\begin{figure}[!h]
\begin{center}
\includegraphics[height=5cm]{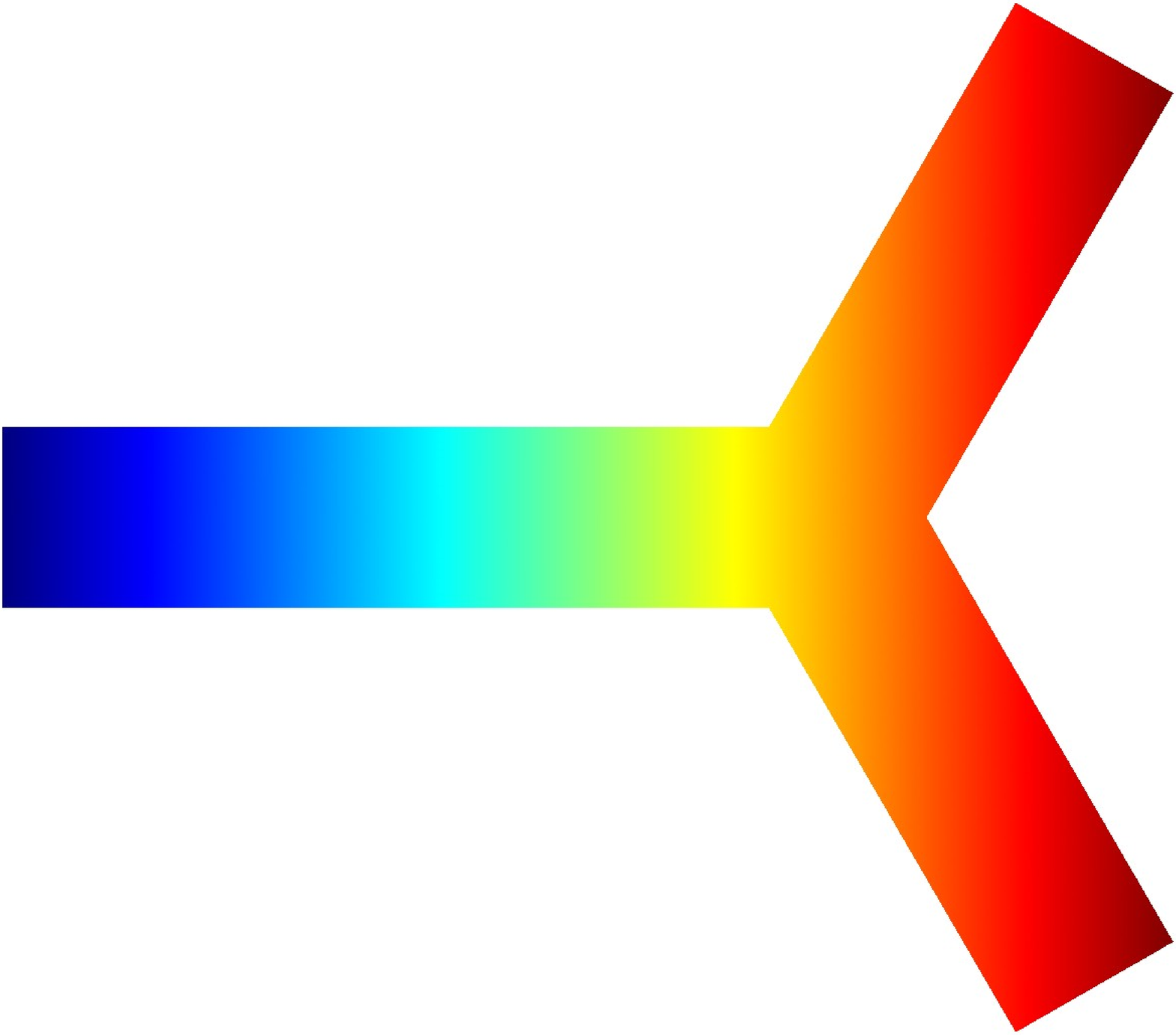}
\includegraphics[height=5cm]{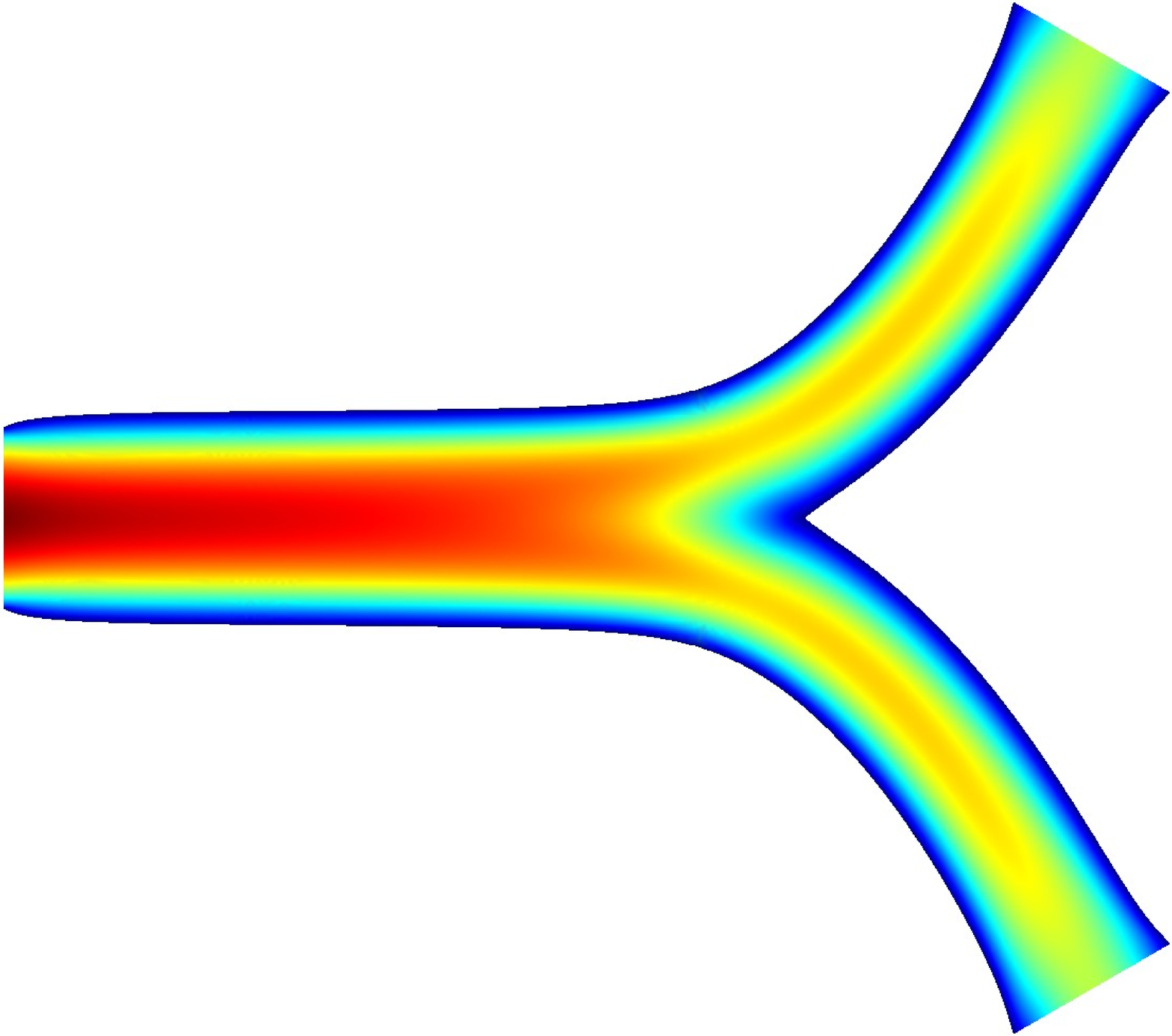}
\caption{2D Case. Left: initial geometry $\Omega_0$, the inlet is the blue branch while the outlets are the red branches. Right: final shape reached by the algorithm, the colors represent the norm of the velocity $u$ (increasing with the warmth of colors).}
\label{numres12D}
\end{center}
\end{figure}

\begin{figure}[!h]
\begin{center}
\includegraphics[height=8cm]{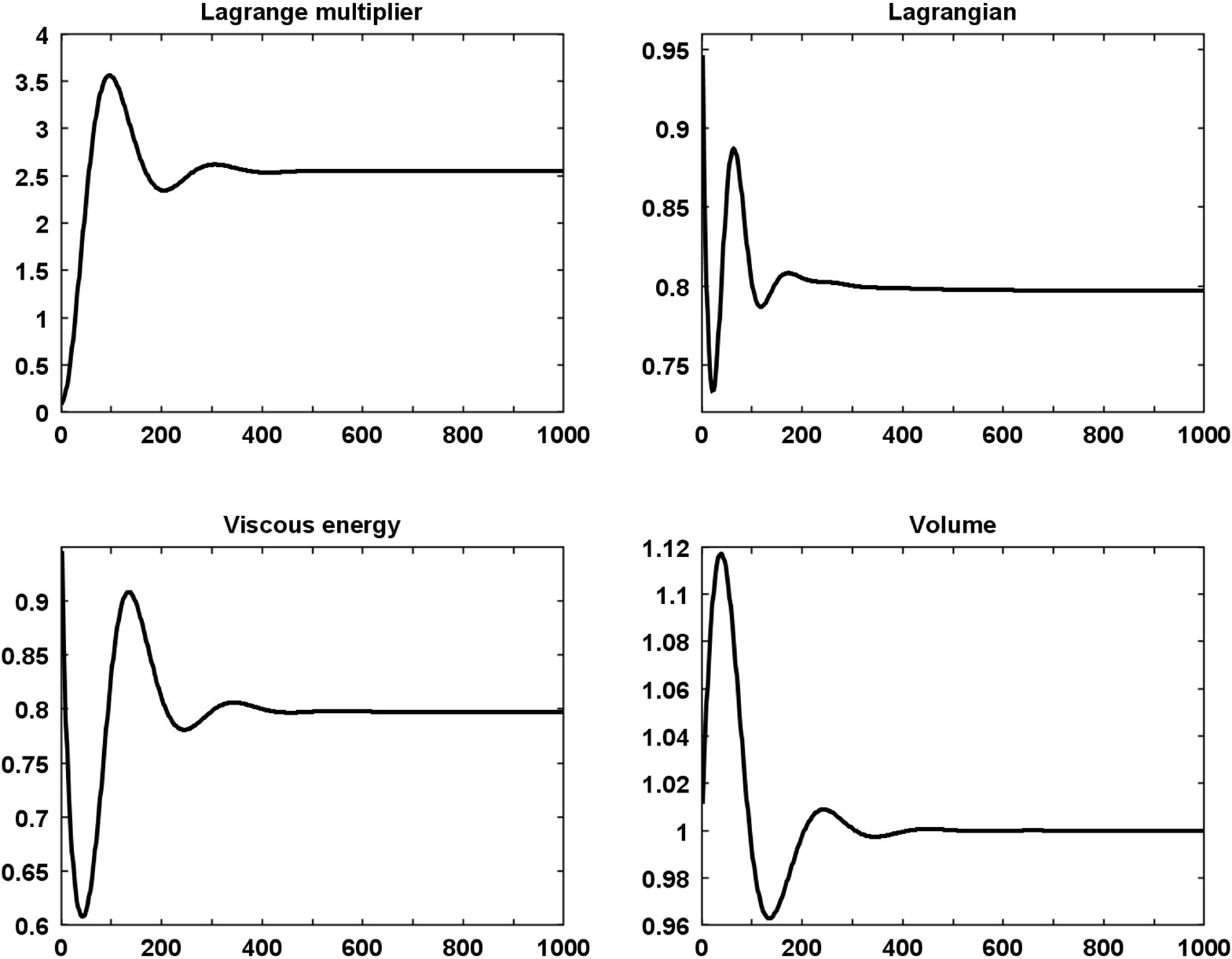}
\caption{2D case. Convergence curves of the Lagrange multiplier $\mu_k$, the Lagrangian $\mathcal{L}_b(\Omega_k,\mu_k)$, the viscous energy $J(\Omega_k)$ and the volume $V(\Omega_k)$. The x-coordinates represent the iterations number.}
\label{numconv12D}
\end{center}
\end{figure}

\begin{figure}[!h]
\begin{center}
\includegraphics[height=5cm]{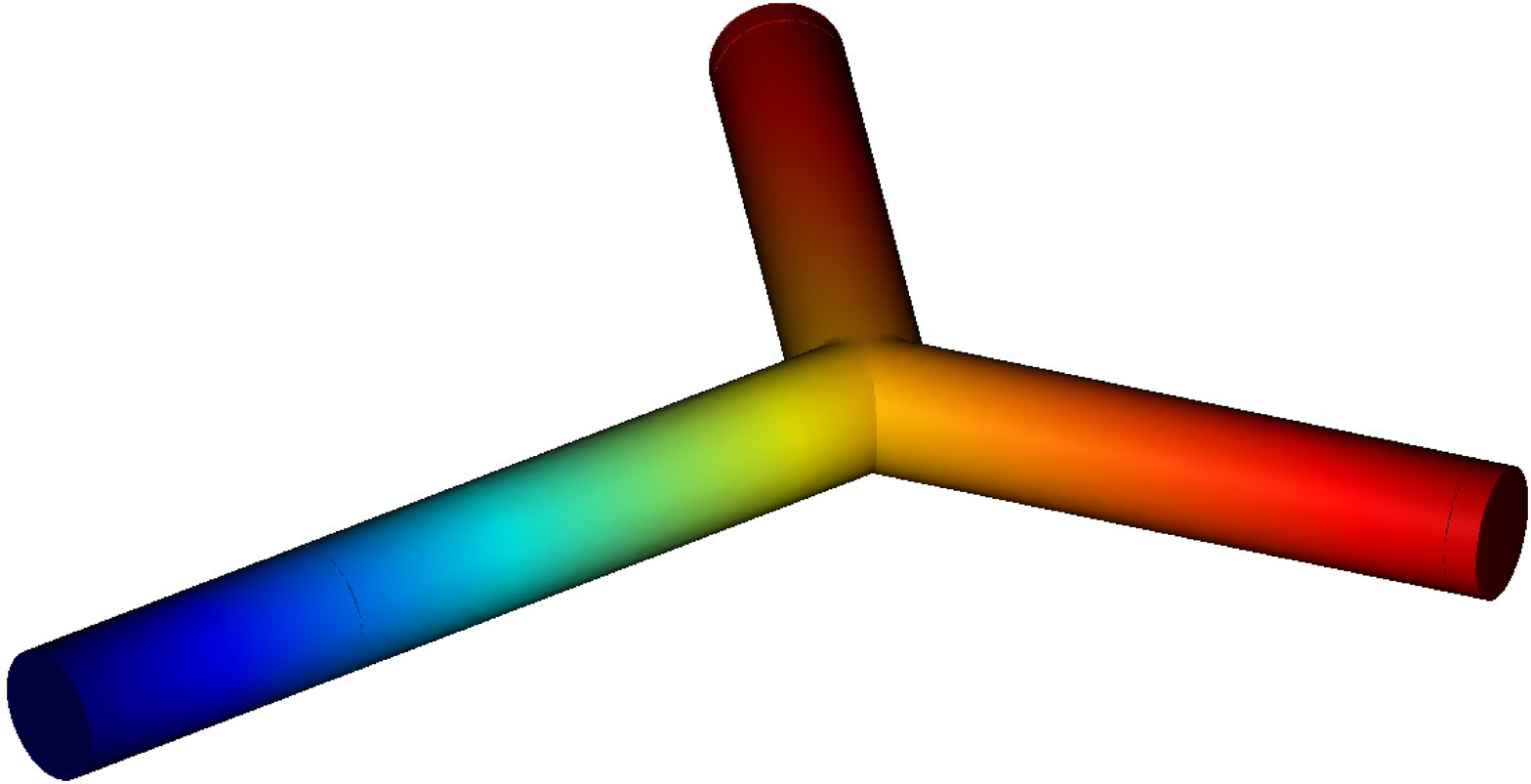}
\includegraphics[height=5cm]{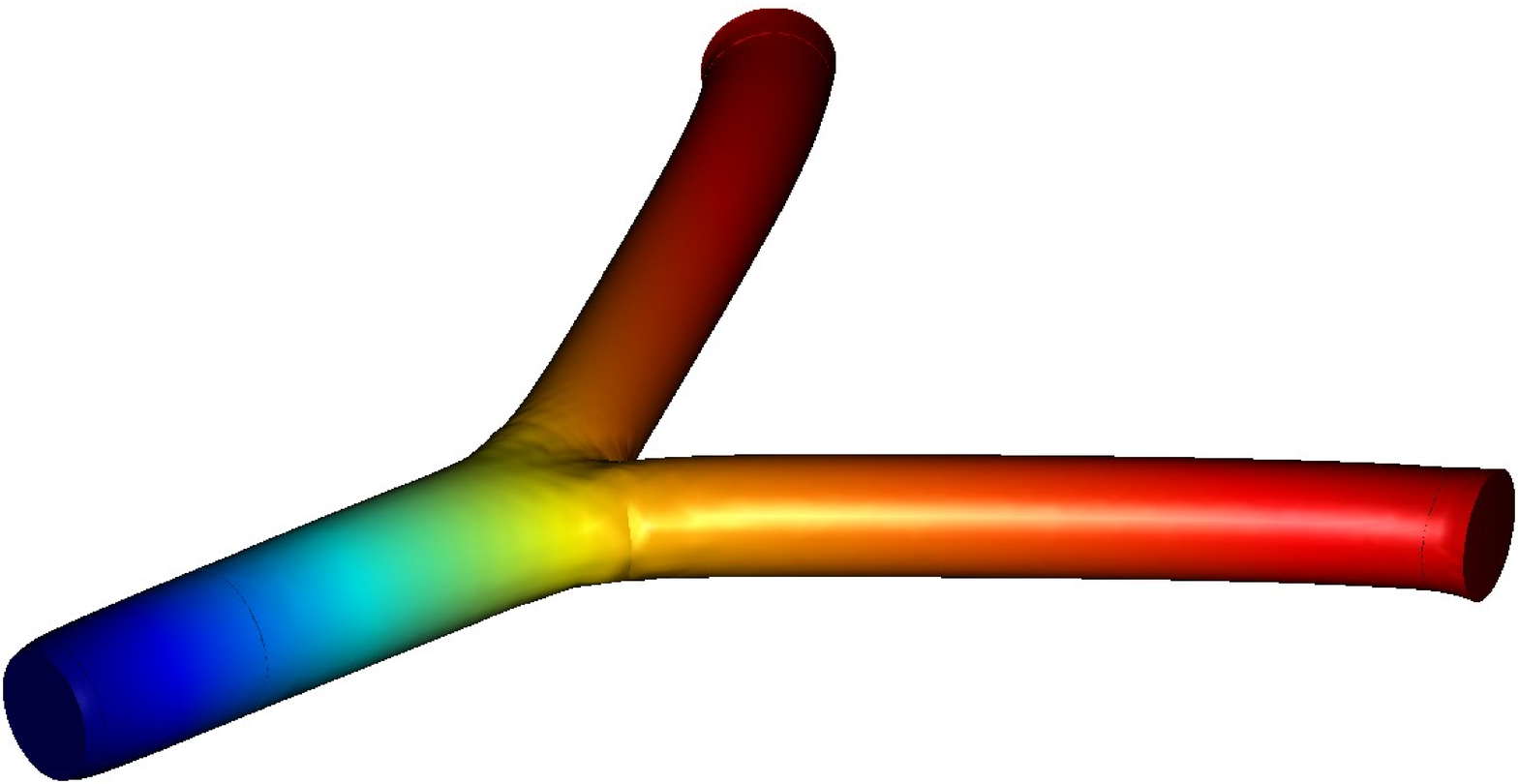}
\caption{3D case. Left: initial geometry $\Omega_0$, the inlet is the blue branch while the outlets are the red branches. Right: final shape reached by the algorithm.}
\label{numres1}
\end{center}
\end{figure}

\begin{figure}[!h]
\begin{center}
\includegraphics[height=8cm]{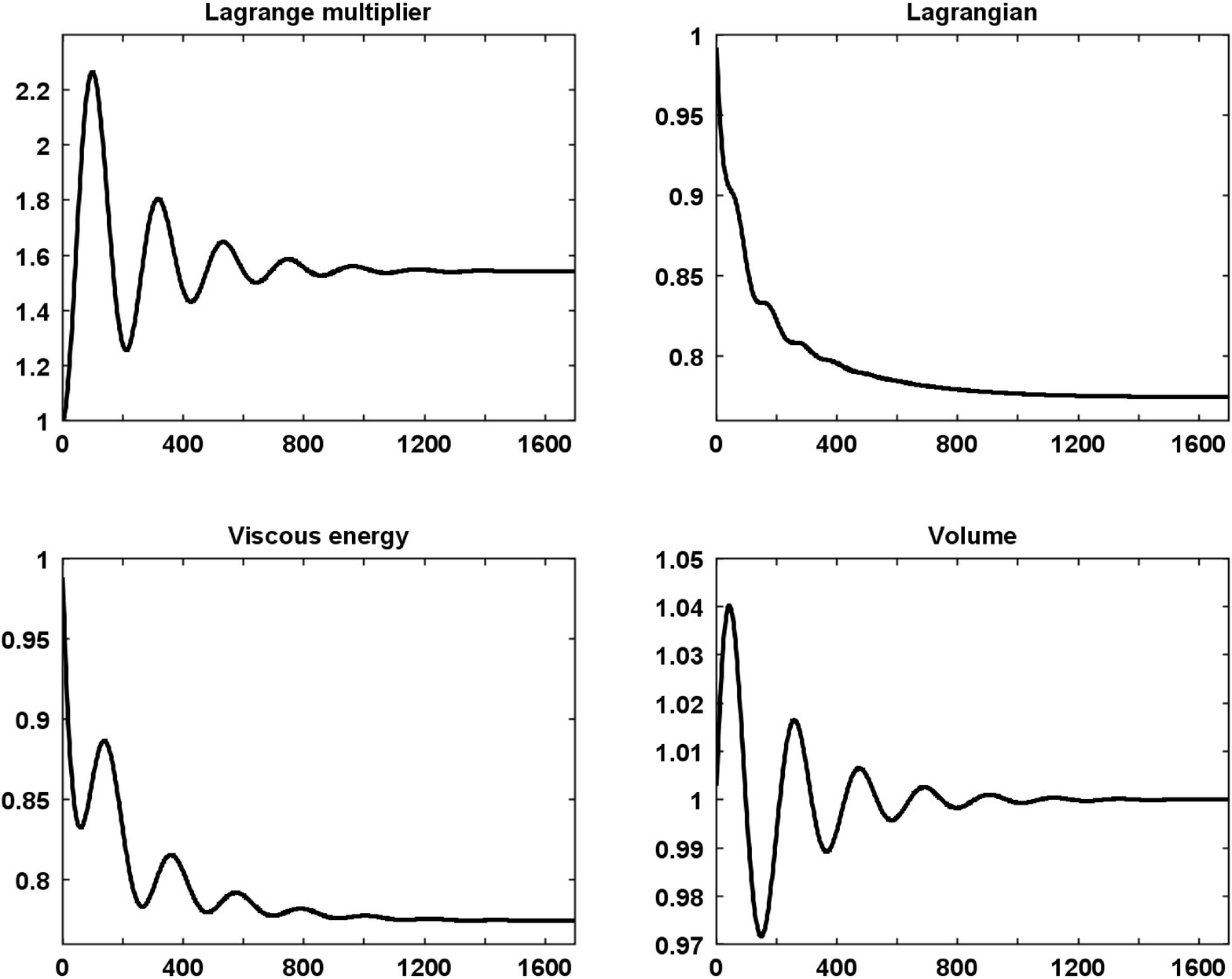}
\caption{3D case. Convergence curves of the Lagrange multiplier $\mu_k$, the Lagrangian $\mathcal{L}_b(\Omega_k,\mu_k)$, the viscous energy $J(\Omega_k)$ and the volume $V(\Omega_k)$. The x-coordinates represent the iterations number.}
\label{numconv1}
\end{center}
\end{figure}\ \\

\subsubsection{2nd case: Dirichlet conditions are imposed at the inlet and Neumann conditions are imposed at the outlets}
\label{pconnuNS}

In this section, the initial geometry $\Omega_0$ is a bifurcation whose branches are all identical, see Figure \ref{numres22D} (2D, left image) and \ref{numres2} (3D, left image). The angle between two nearby branches is $\pi/3$.

The boundary conditions correspond to that of Case \ref{pconnu} adapted to the non-linear regime. Thus a parabolic velocity profile is imposed at the inlet (Dirichlet condition) and Neumann boundary conditions $\sigma(p,\boldsymbol{u})\boldsymbol{n}=-p_0\boldsymbol{n}$ are imposed at the outlets. The pressures imposed at the two outlets are slightly different: $0 \ Pa$ on one outlet and $2 \times 10^{-4} \ Pa$ on the other.

As for the first case, the curves plotted on figure \ref{numconv22D} (2D) and \ref{numconv2} (3D) represent the convergence of the different quantities of the problem: the Lagrange multiplier (upper left), the Lagrangian function (upper right), the viscous energy (lower left) and the volume (lower right). Except the Lagrange multiplier, these quantities are still normalized. As before, they are oscillating around their convergence value (see 1st case).

In 2D, the mesh consists in 8624 triangle elements and no remeshing was needed. The convergence was achieved in $2800$ steps for a computing time of $8$ hours on 2 cores of a Xeon processor (2.33 GHz). In the final geometry the viscous dissipated energy is reduced by $54.5 \%$ relatively to the initial bifurcation. The precision of the volume constraint is smaller than $7 \times 10^{-5}$.

In 3D, convergence was achieved in $7000$ steps. A remeshing was performed at the step $4500$ (represented by the dotted vertical line on figure \ref{numres2}). The initial mesh consisted in $10044$ tetrahedral elements and the second mesh was finer with $37167$ tetrahedral elements. The computation was much slower after the remeshing because of the increased number of elements. The total time needed by the algorithm to converge was of $259$ hours on 2 cores of a Xeon processor (2.33 GHz). The viscous energy dissipated in the final geometry is reduced by $53.6 \%$ relatively to the viscous energy dissipated in the initial geometry. The volume constraint precision at convergence was smaller than $7 \times 10^{-5}$.

Similarly than the result found theoretically at low regime in Section \ref{pconnu}, we observe the closing of one branch. This indicates that this result should be more generally true and that it can probably be extended to non linear regime. A zoom of the 2D bifurcation is plotted on Figure \ref{zoom2D}, the arrows represents the normalized velocities and a region of fluid recirculation appears at the beginning of the closing branch. The colors on Figure \ref{zoom2D} represents the pressure which is almost constant along the branch that is closing (up). This induces a very small flow inside it (the flow and the branch diameter are decreasing together when the optimization algorithm is progressing).

\begin{figure}[!h]
\begin{center}
\includegraphics[height=5cm]{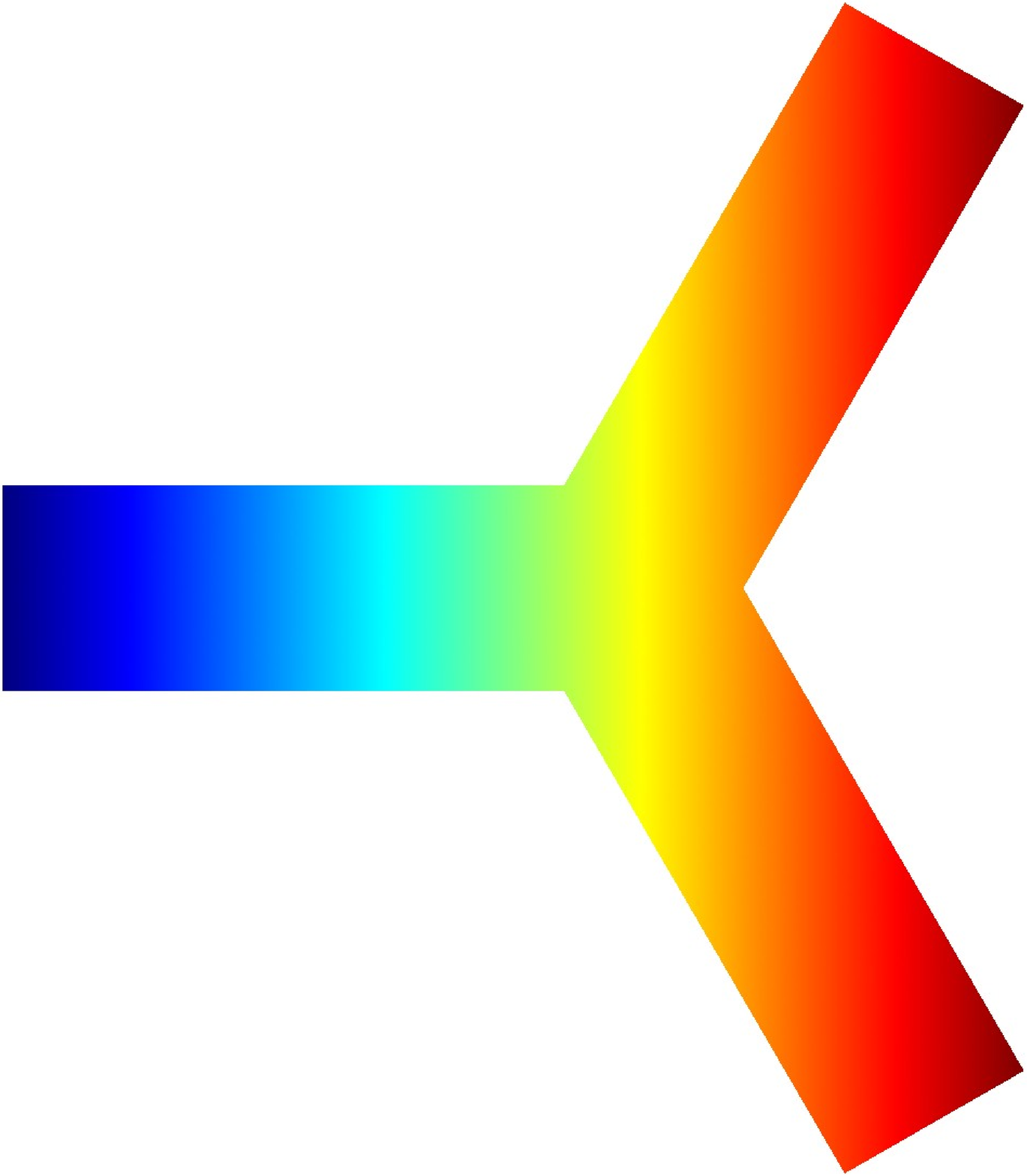}
\includegraphics[height=5cm]{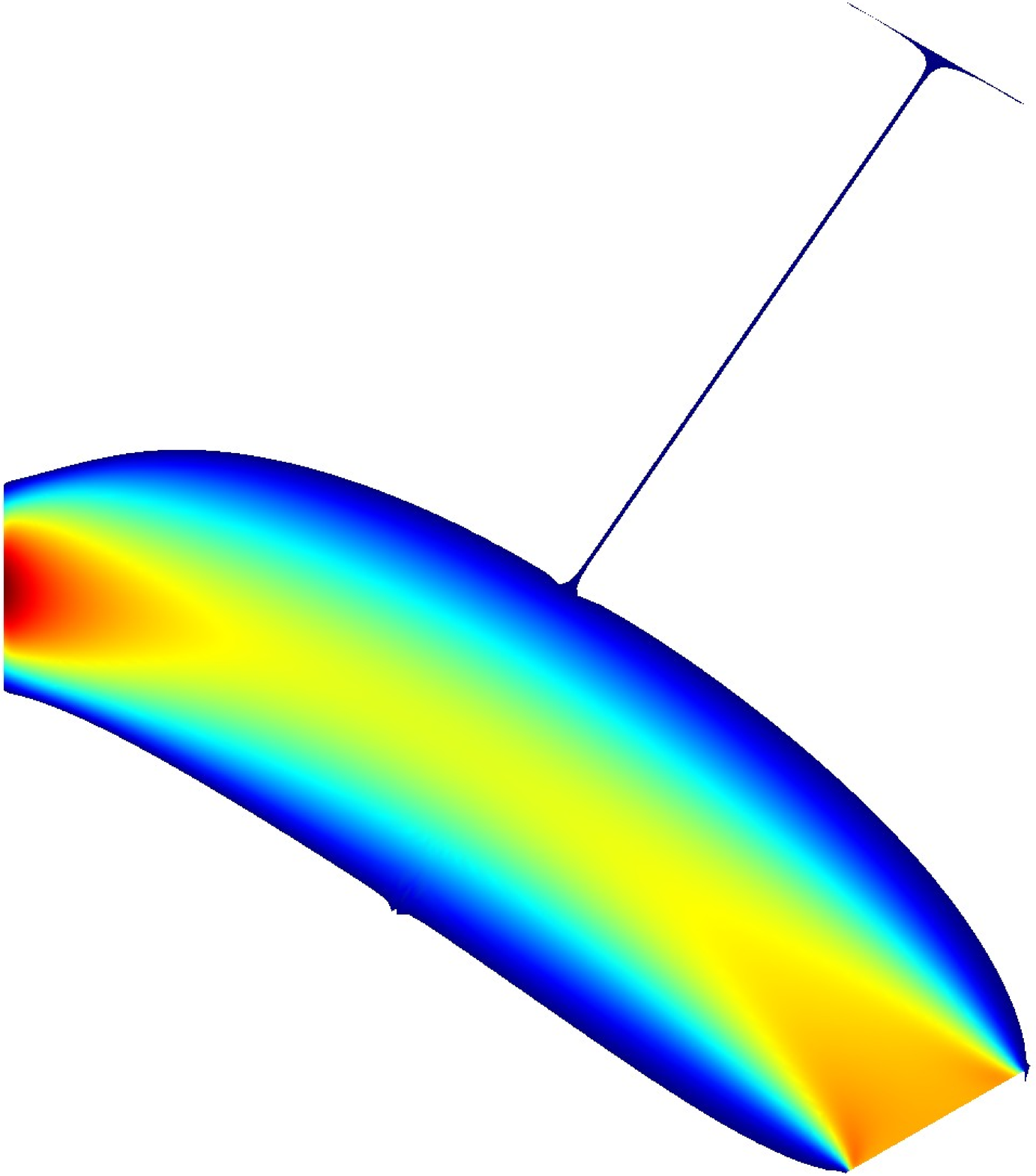}
\caption{2D case. Left: initial geometry $\Omega_0$, the inlet is the blue branch while the outlets are the red branches. Right: final shape reached by the algorithm, the color represents the norm of the velocity $u$ (increasing with the warmth of colors).}
\label{numres22D}
\end{center}
\end{figure}

\begin{figure}[!h]
\begin{center}
\includegraphics[height=5cm]{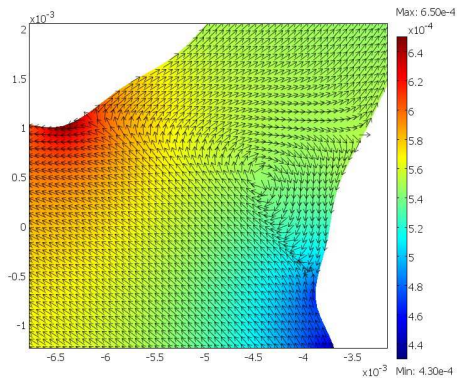}
\caption{Zoom on the bifurcation, the color represents the pressure and the arrows the directions of the velocities (normalized vector field). A recirculation zone is present in the bifurcation and the pressure is almost constant in the closing branch (up) inducing a very small flow in it.}
\label{zoom2D}
\end{center}
\end{figure}

\begin{figure}[!h]
\begin{center}
\includegraphics[height=8cm]{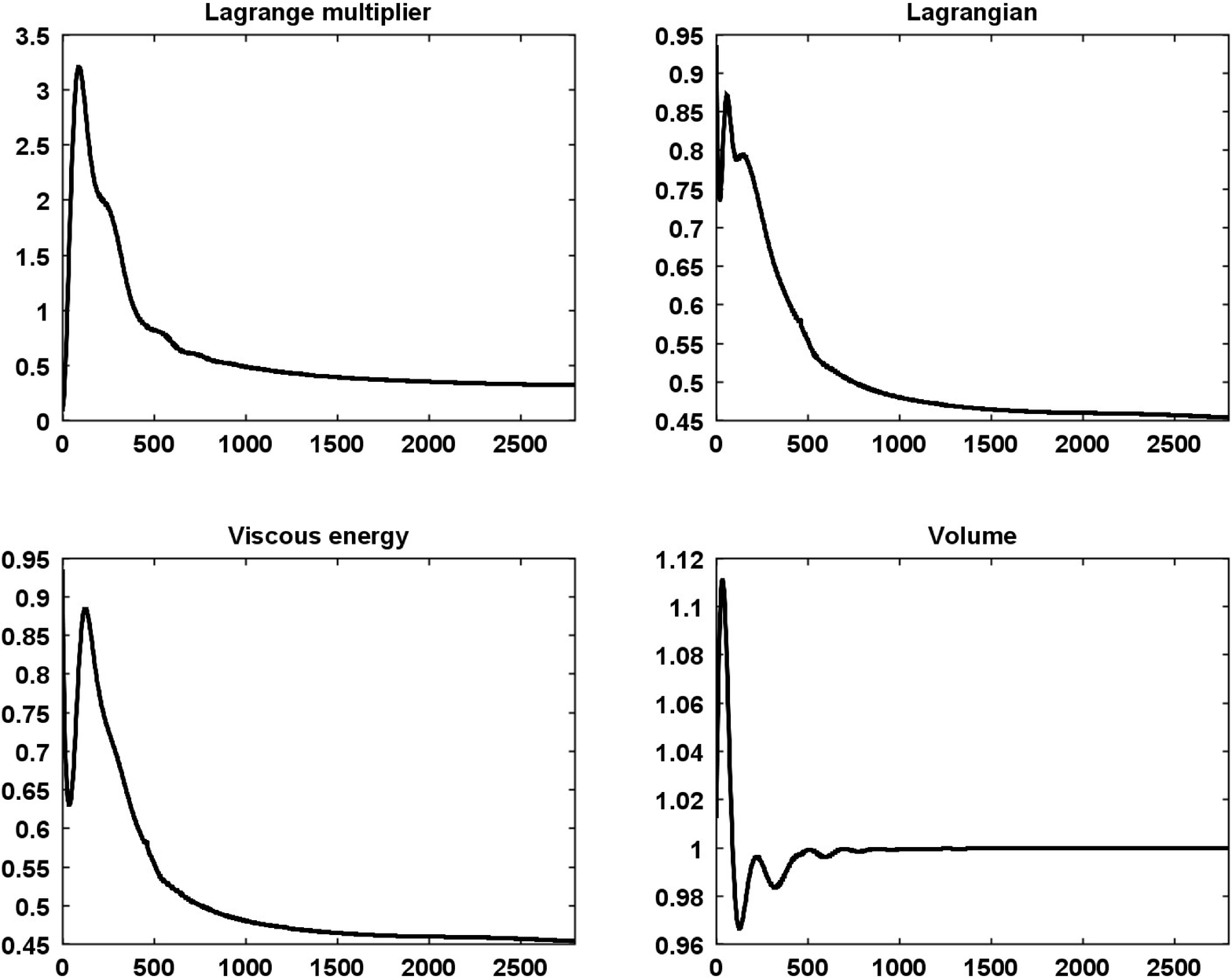}
\caption{2D case. Convergence curves of the Lagrange multiplier $\mu_k$, the Lagrangian $\mathcal{L}_b(\Omega_k,\mu_k)$, the viscous energy $J(\Omega_k)$ and the volume $V(\Omega_k)$. The x-coordinates represent the iterations number.}
\label{numconv22D}
\end{center}
\end{figure}

\begin{figure}[!h]
\begin{center}
\includegraphics[height=5cm]{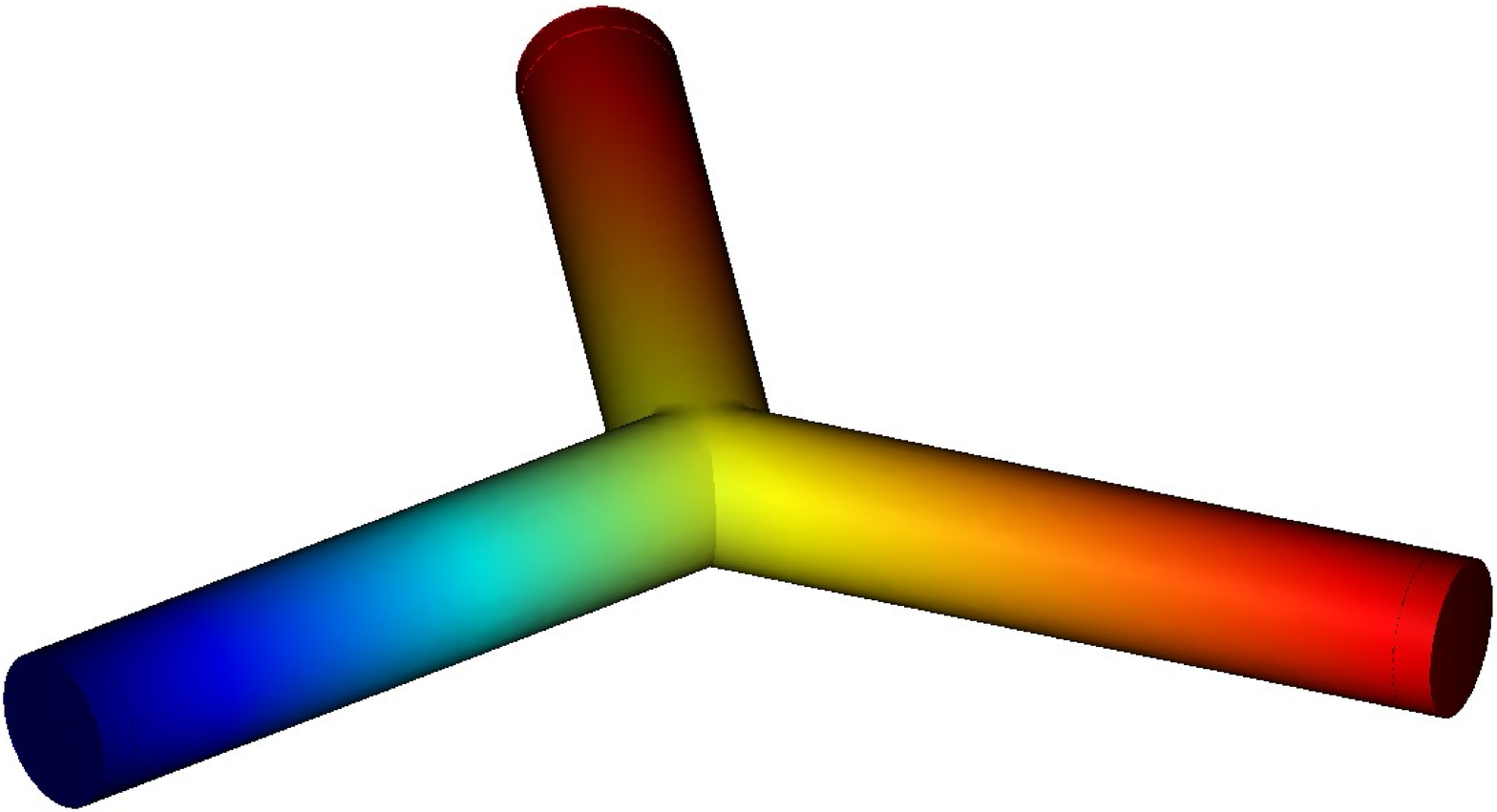}
\includegraphics[height=5cm]{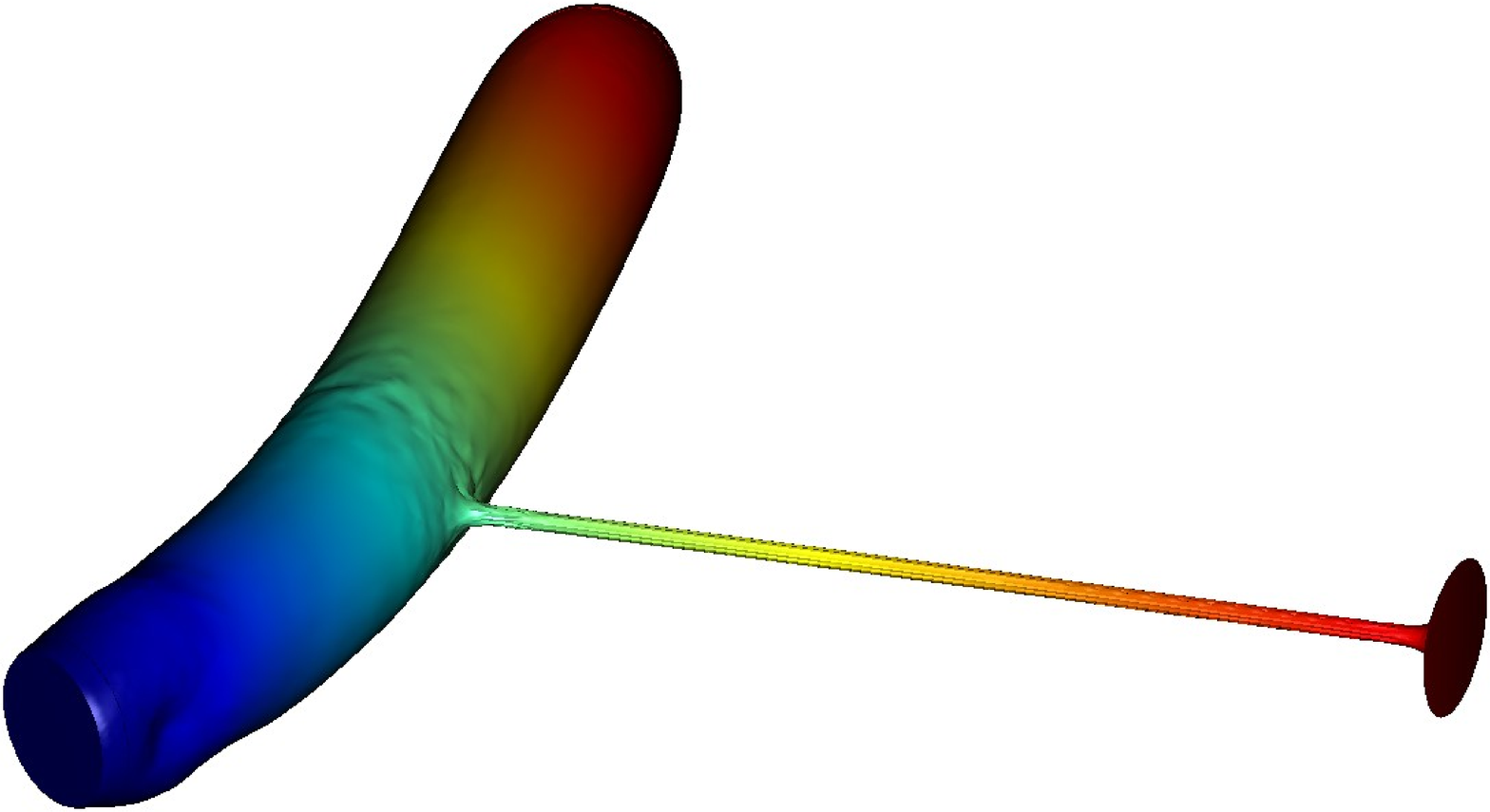}
\caption{Left: initial geometry $\Omega_0$, the inlet is the blue branch while the outlets are the red branches. Right: final shape reached by the algorithm.}
\label{numres2}
\end{center}
\end{figure}

\begin{figure}[!h]
\begin{center}
\includegraphics[height=8cm]{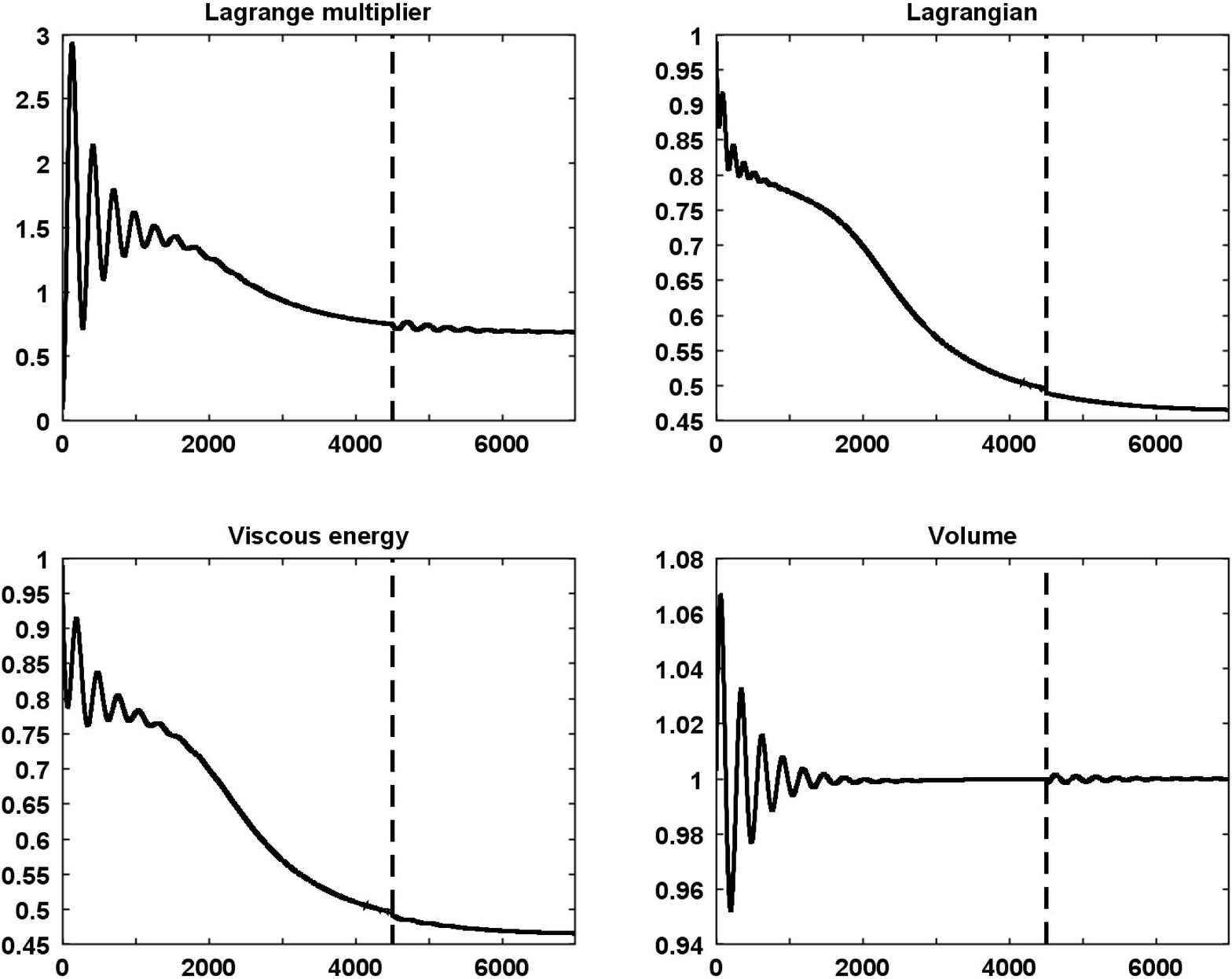}
\caption{Convergence curves of the Lagrange multiplier $\mu_k$, the Lagrangian $\mathcal{L}_b(\Omega_k,\mu_k)$, the viscous energy $J(\Omega_k)$ and the volume $V(\Omega_k)$. The x-coordinates represent the iterations number. The dashed line represents the step $4500$ at which a remeshing was done.} 
\label{numconv2}
\end{center}
\end{figure}

\section{Conclusion}

In this work, we show that fluid boundary conditions play an important role for the determination of the optimal shape of a tree in term of viscous dissipation. Indeed, the optimal shape associated to pressure conditions at leaves is a simple pipe while the optimal shape associated to flow conditions at leaves is a tree. Moreover we have shown that these results hold for both Poiseuille's regime and a regime with a Reynolds number $100$, which, although moderate, is large enough to exhibit inertial effects near the bifurcation.

Boundary conditions can be seen as constraints in the system and a pressure constraint is very different of a flow constraint. Let us consider a pipe in which flows a fluid in Poiseuille's regime. If we call $R$ the hydrodynamic resistance of the pipe, the dissipated energy is $J=R \Phi^2 = (\Delta p)^2 / R$ ($\Phi$ is the flow, $\Delta p$ is the pressure drop). Then minimizing $J$ relatively to the geometry of the pipe depends on the constraint:

\begin{description}
\item[Situation (i):] if the flow $\Phi$ is given then minimizing $J$ is equivalent to minimizing the resistance $R$ and the minimizer corresponds to $\Delta p=0$ (pipe radius goes to infinity).
\item[Situation (ii):] if the pressure drop $\Delta p$ is given then minimizing $J$ is equivalent to maximizing the resistance $R$ and the  minimizer corresponds to $\Phi = 0$ (no flow in the pipe, the pipe radius goes to $0$).
\end{description}

Simply speaking, the case of the tree we studied in this paper is an extrapolation of these two points depending on the conditions imposed at exits. When flows are imposed at outlets then each outlet is in Situation (i) and the optimal geometry is the one that tries to open the branches (in the limit of the constraint on the volume). When pressures are imposed, then all outlets are in Situation (ii) except one which is in Situation (i) since we imposed a non zero flow in the root of the tree and this flow, by conservation, has to get out of the tree by at least one branch. Moreover, in this last situation, our results show that it is ``better'' to close a branch and to use its volume to widen another branch. This implies that the other branch can distribute a larger amount of flow while dissipating less energy than two separate branches. Indeed, the viscous effects are larger near the walls and one wide branch has less walls than two smaller branches. When flow is imposed in a branch, this phenomena is compensated by the fact that reducing the branch radius increases the viscous effects by increasing the velocity gradients in the flow. Consequently, the optimal geometry is a compromise. These phenomena are probably true whenever the inertial effects are present or not, as our numerical simulations confirmed partially (Reynolds $100$). 



Finally, in term of modeling, our work shows that very different optimal structures can be obtained by boundary conditions adjustment (tree or pipe). For organs, we could imagine that depending on the organ function, optimization has been made through evolution by minimizing identical costs with different boundary conditions. 

\section*{Acknowledgments}
We thank the reviewer for his/her thorough review and highly appreciate the comments and suggestions, which significantly contributed to improve the quality of the publication.

\appendix

\section{Model proofs}

\label{app1}

\subsection{Proof of Proposition \ref{poiseuilleRelationProp}}

\label{app1-1}

The linearity of the relation between $\boldsymbol{p}$ and $\boldsymbol{q}$ comes from Poiseuille's law. Hence it is sufficient to compute the pressure vectors $\boldsymbol{p}$ related to the elements of canonical basis of $\mathbb{R}^{2^{N} \times 1}$. 
\par Let us begin with $\boldsymbol{q}=(1, 0, \ldots, 0)^\top  \in \mathbb{R}^{2^{N} \times 1}$. It corresponds to the case in which the fluid exits the tree only by the outlet denoted by $(N,1)$. In other terms, the fluid flows through the tree using the path $\Pi_{0 \to (N,1)}$. By conservation, the volumetric flow rate which enters the tree through the root branch is exactly $\Phi = 1$, so that the pressure $p_1$ at the outlet of the root branch is $p_0 - r_0$. As there is no flow in the right-hand subtree stemming from the root branch, the pressure at its outlets (whose couples of indexes are between $(N,2^{N-1} +1)$ and $(N,2^N)$) is the same as at the outlet of the root branch, namely $p_0 - r_0$. Similarly, the pressure $p_{1,1}$ at the outlet of the branch denoted by $(1,1)$ is $p_0 - (r_0 + r_{1,1}) = p_0 - r_0\left(1+\frac{1}{\xi_{1,1}}\right)$. Following this approach recursively, one finds pressures at the outlet of the branches of the path $\Pi_{0 \to (N,1)}$ denoted by $(i,1)$ with $i \in \llbracket 1,N \rrbracket$ to be 
$p_0 - \left(r_0 + 
\sum_{(k,l)\in \Pi_{0 \to (N,1)}(i)} r_{k,l}\right)$, which writes again $p_0 - r_0 \left(1+\sum_{(k,l) \in \Pi_{0 \to (N,1)}(i)} \frac{1}{\xi_{k,l}} \right)$. \par Therefore, the pressure $p_{N,j}$ (with $j \in \llbracket 1,2^N \rrbracket$) at the outlet of the tree is 
$$
p_{N,j}= p_0 - r_0 \left(1+\sum_{(k,l) \in \Pi_{0 \to (N,1)}(N-\nu_{0,j-1})} \frac{1}{\xi_{k,l}} \right).
$$
Applying the same reasoning to any vector $\boldsymbol{q} =  (0, \ldots, 0, 1, 0, \ldots, 0)^\top \in \mathbb{R}^{2^{N} \times 1}$ (with $1$ at the $i$-th position) yields
$$
\boldsymbol{p} = 
\left(p_0 - r_0 \left(1+\sum_{(k,l) \in \Pi_{0 \to (N,i)}(N-\nu_{i,0})} \frac{1}{\xi_{k,l}} \right), \ldots, 
p_0 - r_0 \left(1+\sum_{(k,l) \in \Pi_{0 \to (N,i)}(N-\nu_{i,2^N-1})} \frac{1}{\xi_{k,l}} \right)\right)^\top.$$
Consequently, we obtain
$$
\forall i \in \llbracket 1,2^N \rrbracket, \
p_{N,i} = p_0 - r_0 \sum_{j=1}^{2^N} q_j \left(1+\sum_{(k,l) \in \Pi_{0 \to (N,i)}(N-\nu_{i-1,j-1})} \frac{1}{\xi_{k,l}} \right)\\
$$

\subsection{Proof of Proposition \ref{defq}}

\label{app1-2}
The set of vectors $(\boldsymbol{v_i})_{i \in \llbracket 1, 2^N -1 \rrbracket}$ forms a basis of $\{ \boldsymbol{u_N}\}^{\perp}$, the orthogonal complement of $\boldsymbol{u_N}$. Let $i\in \llbracket 1,2^N-1\rrbracket$. Multiplying $(\ref{poiseuilleRelation})$ by $\boldsymbol{v_i}$ in the sense of the inner product yields
\begin{equation}
\forall i \in \llbracket 1,2^{N}-1 \rrbracket, \ 
- \langle \boldsymbol{p},\boldsymbol{v_i}\rangle = \langle A_N(\boldsymbol{\xi}) \boldsymbol{q}, \boldsymbol{v_i}\rangle.
\end{equation}
Furthermore, by conservation of the flow rate, one has $\langle \boldsymbol{q},\boldsymbol{u_N}\rangle = \Phi$.
\par Thus, since $A_N(\boldsymbol{\xi})$ is real symmetric, the vector $\boldsymbol{q}$ verifies the following system
\begin{equation}
\left\{
\begin{array}{l}
\langle \boldsymbol{q}, A_N(\boldsymbol{\xi}) \boldsymbol{v_i}\rangle = - \langle \boldsymbol{p},\boldsymbol{v_i}\rangle ,~\forall i \in \llbracket 1,2^{N}-1 \rrbracket \\
\langle \boldsymbol{q},\boldsymbol{u_N} \rangle = \Phi.\\
\end{array}
\right.
\end{equation}
\par That explains the structure of the matrix $M_N(\boldsymbol{\xi})$.
Let us now to prove that $M_N(\boldsymbol{\xi})$ is invertible. It amounts to prove that the set of vectors $(A_N(\boldsymbol{\xi})  \boldsymbol{v_1}, \ldots, A_N(\boldsymbol{\xi}) \boldsymbol{v_{2^N-1}}, \boldsymbol{u_N})$ is linearly independent. The fact that the set $(A_N(\boldsymbol{\xi})  \boldsymbol{v_1}, \ldots, A_N(\boldsymbol{\xi}) \boldsymbol{v_{2^N-1}})$ is linearly independent is almost trivial, since $A_N(\boldsymbol{\xi})$ is invertible by Proposition \ref{Ainvertible} and since the vectors $(\boldsymbol{v_i})_{i \in \llbracket 1, 2^N-1\rrbracket }$ are linearly independent.
\par Now, assume that
\begin{equation}\label{combiLin}
\lambda \boldsymbol{u_N} + \sum_{i=1}^{2^N-1} \mu_i A_N(\boldsymbol{\xi}) \boldsymbol{v_i} = 0 \textrm{ with } (\lambda,  \mu_{1}, \ldots, \mu_{2^N-1}) \in \mathbb{R}^{2^N}.
\end{equation}
\par Multiplying the equation (\ref{combiLin}) by the vector $A_N(\boldsymbol{\xi})^{-1}\boldsymbol{u_N}$ in the sense of the inner product yields
$$
\lambda \langle\boldsymbol{u_N},A_N(\boldsymbol{\xi})^{-1}\boldsymbol{u_N}\rangle + \sum_{i=1}^{2^N-1} \mu_i \langle A_N(\boldsymbol{\xi})\boldsymbol{v_i}, A_N(\boldsymbol{\xi})^{-1}\boldsymbol{u_N}\rangle = 0.
$$
Since $\boldsymbol{v_i} \in \{ \boldsymbol{u_N}\}^{\perp}$ for all $i \in \llbracket 1,2^{N}-1 \rrbracket$, it follows that
$$
\forall i \in \llbracket 1,2^{N}-1 \rrbracket, \
\langle A_N(\boldsymbol{\xi})\boldsymbol{v_i}, A_N(\boldsymbol{\xi})^{-1}\boldsymbol{u_N}\rangle
= \langle\boldsymbol{v_i}, \boldsymbol{u_N}\rangle= 0.
$$
Moreover, $\langle \boldsymbol{u_N},A_N(\boldsymbol{\xi})^{-1}\boldsymbol{u_N}\rangle >0 $. Indeed, we know that $A_N(\boldsymbol{\xi})$ is symmetric positive definite, and therefore $A_N(\boldsymbol{\xi})^{-1}$ inherits from these properties. Consequently, $\lambda=0$.
\par Now, (\ref{combiLin}) rewrites,
$$
\sum_{i=1}^{2^N-1} \mu_i A_N(\boldsymbol{\xi}) \boldsymbol{v_i} = 0.
$$
By the linear independence of the set $(A_N(\boldsymbol{\xi}) \boldsymbol{v_i})_{ i \in \llbracket 1, 2^N-1\rrbracket}$, we conclude that $
\forall i \in \llbracket 1,2^{N}-1 \rrbracket$, $\mu_i = 0$.
\par It proves that the vectors $(A_N(\boldsymbol{\xi})  \boldsymbol{v_1}, \ldots, A_N(\boldsymbol{\xi}) \boldsymbol{v_{2^N-1}}, \boldsymbol{u_N})$ are linearly independent.

\section{An augmented Lagrangian algorithm}\label{augLagAlg}

We present in this section the augmented Lagrangian algorithm used in this work to optimize the energy dissipated by a fluid in a dyadic tree with respect to the shape. 

One of the main interest of the augmented Lagrangian if compared to the classical Lagrangian is the regularization operation which generally improves the condition number of the dual function. 
Thus, a faster convergence of the maximizing sequence of the Lagrange multipliers is expected. 
However, the choice of a good parameter of augmentation can be very difficult. We will discuss this aspect concerning our simulations at the end of this section.
\par The descent direction in the main step of this algorithm will be computed thanks to a gradient method, which implies to calculate at each iteration the derivative of our criterion with respect to the domain. 
\par The augmented Lagrangian associated to Problem (\ref{pbOptim1}) is
\begin{equation}
\mathcal{L}_b(\Omega,\ell)=J(\Omega)+\ell G(\Omega)+\frac{b}{2}\left(G(\Omega)\right)^2,
\end{equation}
where $\ell\in \mathbb{R}$ is the Lagrange multiplier, associated with the volume constraint $G(\Omega)$, that is
$$
G(\Omega)=\textrm{meas}(\Omega)-V_0.
$$
It is then easy to determine the shape derivative of $\mathcal{L}_b$. One has
\begin{equation}
   \d \mathcal{L}_b (\Omega ; \boldsymbol{V})=\int_\Gamma
\left[2\mu \left(\varepsilon (\boldsymbol{u}):\varepsilon (\boldsymbol{v})-|\varepsilon
(\boldsymbol{u})|^2\right)+\ell+b(\textrm{meas}(\Omega)-V_0)\right](\boldsymbol{V}.\boldsymbol{n})\d s .
\end{equation}

We give now some precisions on the second step of the augmented Lagrangian algorithm, in particular on the choice of the descent method. Let $\Omega_k$ be the domain obtained at iteration $k-1$, $\Gamma_k$ its lateral boundary and $\mu_k$ the associated Lagrange multiplier. $\Omega_{k+1}$ is searched as a perturbation of the identity. That is why we write $\Omega_{k+1}=(I+\varepsilon_k\boldsymbol{d_k})(\Omega_k)$, where $\boldsymbol{d_k}$ is a vector field  representing the perturbation of the mesh and $\varepsilon_k$ a variable step.
\par Since we want to implement a gradient method, a first approach would be to consider $\boldsymbol{d_k}$ such that
\begin{equation}\label{choice1dk}
\boldsymbol{d_k}_{\mid_{\Gamma_k}}=-\nabla\mathcal{L}_b(\Omega_k,\ell_k), \ \forall k\in \mathbb{N}.
\end{equation}
This question has been much studied (see for instance \cite{allaire,burger,dogan,deGournay,mo-pi,protas}). In particular, in \cite{dogan}, the authors study similar methods applied to image segmentation, and exhibit some situations in which the choice of $\boldsymbol{d_k}$ as in (\ref{choice1dk}) is the worst solution from a numerical point of view. 
\par In this work, we chose $\boldsymbol{d_k}$ such that
$$
\Arrowvert \boldsymbol{d_k}\Arrowvert _{(H^1 (\Omega_k))^3}^2=-\langle \d \mathcal{L}_b(\Omega_k,\ell_k),\boldsymbol{d_k}),
$$
which corresponds to $\boldsymbol{d_k}$ solution of the equation
\begin{equation}\label{deplacement}
\left\{\begin{array}{ll}
-\Delta \boldsymbol{d_k} +\boldsymbol{d_k}=0 & x\in \Omega _k \\
\boldsymbol{d_k}=0 & x\in E\cup S\\
\displaystyle \frac{\partial \boldsymbol{d_k}}{\partial \boldsymbol{n}}=-\nabla \mathcal{L}_b(\Omega _k, \ell _k) & x\in \Gamma _k.
\end{array}
\right.
\end{equation}
We are now able to write the algorithm of resolution of the Problem (\ref{pbOptim1}).\\

\begin{center}
{\large \sl Augmented Lagrangian algorithm for the resolution of Problem (\ref{pbOptim1})}
\end{center}
\begin{enumerate}
\item \textit{Initialization.} Choose $\Omega _0 \in E$ and $\ell _0 \in\mathbb{R}$. \\
Let also fix $\tau >0$ and $\varepsilon _{\textnormal{stop}}$.
\item \label{iterk}\textit{Iteration $k$.} $\ell _k$ is known.
\begin{enumerate}
\item Resolution of the Navier-Stokes problem (and storage of its solution $\boldsymbol{u_k}$)
$$
\left\{\begin{array}{ll}
\displaystyle -\mu \Delta \boldsymbol{u_k}+\nabla p_k+\boldsymbol{u_k}\cdot\nabla \boldsymbol{u_k}=0 & x\in \Omega _{k}\\
\displaystyle \nabla\cdot \boldsymbol{u_k}=0 & x\in \Omega _k\\
\boldsymbol{u_k}=\boldsymbol{u_0} & x\in E \\
\boldsymbol{u_k}=0 & x\in \Gamma _k\\
-p_k\boldsymbol{n}+\mu \varepsilon (\boldsymbol{u_k})\boldsymbol{n}=-p_0.\boldsymbol{n} & x\in S.
\end{array}
\right.
$$
\item Resolution of the adjoint state (and storage of its solution $\boldsymbol{v_k}$)
$$
\left\{\begin{array}{ll}
\displaystyle -\mu \Delta \boldsymbol{v_k} + (\nabla \boldsymbol{u_k})^\top\!\! \ \boldsymbol{v_k} -(\nabla \boldsymbol{v_k}) \ \boldsymbol{u_k}+\nabla q _k=-2\mu \Delta \boldsymbol{u_k} & x\in \Omega_k\\
\nabla\cdot \boldsymbol{v_k}=0 & x\in \Omega _k\\
\boldsymbol{v_k}=0 & x\in E\cup \Gamma _k\\
-q_k\boldsymbol{n}+\mu \varepsilon (\boldsymbol{v_k})\boldsymbol{n}+(\boldsymbol{u_k}\cdot \boldsymbol{n})\boldsymbol{v_k}-4\mu \varepsilon (\boldsymbol{u_k})\boldsymbol{n}=0 & x\in S. 
\end{array}
\right.
$$
\item Calculation of the scalar
\begin{eqnarray*}
\beta _k & =  & \nabla \mathcal{L}_b(\Omega _k,\mu _k)\cdot \boldsymbol{n}\\
 & = & 2\mu\left( \varepsilon (\boldsymbol{u_k}):\varepsilon (\boldsymbol{v_k})-|\varepsilon (\boldsymbol{u_k})|^2\right)+\ell _k +b\left(\textnormal{meas }(\Omega _k)-V_0\right).
\end{eqnarray*}
\item Determination of the displacement of the mesh $\boldsymbol{d_k}$ as the solution of the elliptic equation
$$
\left\{\begin{array}{ll}
-\Delta \boldsymbol{d_k} +\boldsymbol{d_k}=0 & x\in \Omega _k \\
\boldsymbol{d_k}=0 & x\in E\cup S\\
\displaystyle \frac{\partial \boldsymbol{d_k}}{\partial \boldsymbol{n}}=-\beta _k\boldsymbol{n} & x\in \Gamma _k.
\end{array}
\right.
$$
\item Determination of an $\varepsilon _k$ that decreases the augmented Lagrangian.
\item Determination of the domain $\Omega _{k+1}$: $\Omega _{k+1}=(I+\varepsilon _k \boldsymbol{d_k})(\Omega _k)$.
\item Reinitialization of the Lagrange multiplier: $\ell_{k+1}=\ell_k+\tau  \left(\textnormal{meas }(\Omega _{k+1})-V_0\right)$.
\end{enumerate}\
\item \textit{Stopping criterion.} The algorithm stops if $\left| \ell _{k+1}-\ell_k\right| \leq \varepsilon _{\textnormal{stop}}$ and loops back to step \ref{iterk} if the inequality is false.
\end{enumerate}

\par As pointed out upwards, the choice of the parameter $b$ in the augmented Lagrangian algorithm can be difficult. Practically speaking, $b$ has to be chosen neither too big nor too small. Indeed, the biggest is the parameter $b$, the best is the conditioning of the dual functional giving the constraints and then the convergence of the sequence of Lagrange multipliers. Nevertheless, if $b$ is chosen too big, the conditioning of the primal problem $\min \{\mathcal{L}_b(\Omega, \ell_k),\Omega\in E\}$ deteriorates and it becomes more difficult to solve. Thus a compromise has to be done. Practically, a lot of preliminary tests have to be done to find $b$ before the algorithm could be run properly.


\end{document}